\pdfoutput=1
\documentclass[a4paper,12pt]{article}
\usepackage[margin=2.5cm]{geometry}
\usepackage[T1]{fontenc}
\usepackage[english]{babel}
\usepackage{amsfonts, amsthm, mathrsfs, amsmath, amssymb}
\usepackage{enumerate}
\usepackage[shortlabels]{enumitem}
\usepackage{mathtools}
\usepackage{graphicx}
\usepackage{tikz}
\usepackage{tikz-cd}
\usepackage[all]{xy}
\usepackage{graphicx}
\usepackage{amsmath}
\usepackage{adjustbox}
\usepackage{graphicx}
\usepackage{extpfeil}
\usepackage{comment}
\usepackage{float}

\usepackage[mathscr]{euscript}
\usepackage[toc]{appendix}
\usepackage{hyperref}
\hypersetup{
colorlinks=true,
linkcolor=blue,
filecolor=magenta,      
urlcolor=cyan,
citecolor=[RGB]{0,204,0},
}
\usepackage[activate={true, nocompatibility}, final, tracking=true, kerning=true, spacing=true, factor=1100, stretch=10, shrink=10]{microtype}
\usepackage{authblk}
\usepackage{tikz}
\usepackage{dsfont}
\usetikzlibrary{matrix}

\usepackage{color}
\usepackage{resizegather}
\usepackage[normalem]{ulem}
\usepackage{enotez}
\setenotez{backref=true}

\usepackage{circuitikz}

\usepackage[nameinlink]{cleveref}

\usepackage{pbox}

\newcommand{\saranew}[1]{\textcolor{cyan}{#1}}

\theoremstyle{plain}
\newtheorem{thm}{Theorem}[subsection]
\newtheorem{coroll}[thm]{Corollary}
\newtheorem{lemma}[thm]{Lemma}

\newtheorem{prop}[thm]{Proposition}

\newtheorem*{thm*}{Main Theorem}

\theoremstyle{definition}
\newtheorem{defn}[thm]{Definition}
\newtheorem{remark}[thm]{Remark}
\newtheorem{obs}[thm]{Observation}
\newtheorem{notn}[thm]{Notation}

\newtheorem{example}[thm]{Example}
\newtheorem{proposal}[thm]{Proposal}
\newtheorem{hypothesis}[thm]{Hypothesis}

\Crefname{thm}{Theorem}{Theorems}
\Crefname{coroll}{Corollary}{Corollaries}
\Crefname{lemma}{Lemma}{Lemmas}
\Crefname{prop}{Proposition}{Propositions}
\Crefname{const}{Construction}{Constructions}
\Crefname{context}{Context}{Contexts}
\Crefname{conj}{Conjecture}{Conjectures}
\Crefname{algor}{Algorithm}{Algorithms}
\Crefname{exer}{Exercise}{Exercises}
\Crefname{question}{Question}{Questions}
\Crefname{defn}{Definition}{Definitions}
\Crefname{remark}{Remark}{Remarks}
\Crefname{obs}{Observation}{Observations}
\Crefname{notn}{Notation}{Notations}
\Crefname{fact}{Fact}{Facts}
\Crefname{example}{Example}{Examples}
\Crefname{thmn}{Theorem}{Theorems}
\Crefname{proposal}{Proposal}{Proposal}
\Crefname{hypothesis}{Hypothesis}{Hypothesis}
\newcommand{\bseries}[1]{ [\hspace{-0,5mm}[ {#1} ]\hspace{-0,5mm}] }

\tikzset{
  symbol/.style={
    draw=none,
    every to/.append style={
      edge node={node [sloped, allow upside down, auto=false]{$#1$}}}
  }
}
\microtypecontext{spacing=nonfrench}

\makeatletter % changes the catcode of @ to 11
\def\makeCal#1{%
\expandafter\newcommand\csname c#1\endcsname{\mathcal{#1}}}

\def\makeBB#1{%
\expandafter\newcommand\csname b#1\endcsname{\mathbb{#1}}}

\def\makeFrak#1{%
\expandafter\newcommand\csname f#1\endcsname{\mathfrak{#1}}}

\def\makeScr#1{%
\expandafter\newcommand\csname s#1\endcsname{\mathscr{#1}}}

\count@=0
\loop
\advance\count@ 1
\edef\y{\@Alph\count@} % Alph is upper case 
\expandafter\makeCal\y
\expandafter\makeBB\y
\expandafter\makeFrak\y
\expandafter\makeScr\y
\ifnum\count@<26
\repeat
\def\makelowercaseFrak#1{%
\expandafter\newcommand\csname mf#1\endcsname{\mathfrak{#1}}}
\count@=0
\loop
\advance\count@ 1
\edef\y{\@alph\count@} % alph is lower case.. 
\expandafter\makelowercaseFrak\y
\ifnum\count@<26
\repeat
\DeclareMathOperator{\GL}{GL}

\DeclareMathOperator{\AlgSt}{\mathscr{G}}

\newcommand{\spec}{\operatorname{Spec}}

\DeclareMathOperator{\NS}{NS}

\DeclareMathOperator{\ord}{ord}

\DeclareMathOperator{\type}{type}

\DeclareMathOperator{\colim}{colim}

\DeclareMathOperator{\Map}{\operatorname{Map}}

\DeclareMathOperator{\SL}{SL}

\newcommand{\B}{\mathcal{B}}

\newcommand{\on}{\operatorname}

\newcommand{\st}{\on{st}}
\newcommand{\rep}{\on{rep}}

\newcommand{\pre}{\on{pre}}

\newcommand{\dual}{\vee}

\newcommand{\Proj}{\on{Proj}}

\newcommand{\Aut}{ \on{Aut} } 

\newcommand{\Spec}{\on{Spec}}

\newcommand{\Pic}{\on{Pic}}

\usepackage{tikz}

\newcommand*{\triple}[2][.1ex]{%
  \mathrel{\vcenter{\offinterlineskip%
  \hbox{$#2$}\vskip#1\hbox{$#2$}\vskip#1\hbox{$#2$}}}}

\newcommand\dirac{/\kern-1.2ex\partial} % Dirac operator
\newcommand\qu{/\kern-.7ex/} % Categorical quotients
\newcommand\lqu{\backslash \kern-.7ex \backslash} % Categorical
\newcommand\dr{r_+ \kern-.7ex - \kern-.7ex r_-}
 
\newcommand{\pertgen}{\mathrm{P}\overline{\cM}}
\newcommand{\bigpert}{\mathscr{A}nc}
\newcommand{\sh}{\mathrm{sh}}

\tikzset{
  symbol/.style={
    draw=none,
    every to/.append style={
      edge node={node [sloped, allow upside down, auto=false]{$#1$}}}
  }
}

    \newtheoremstyle{TheoremNum}
        {\topsep}{\topsep}              %%% space between body and thm
        {\itshape}                      %%% Thm body font
        {}                              %%% Indent amount (empty = no indent)
        {\bfseries}                     %%% Thm head font
        {.}                             %%% Punctuation after thm head
        { }                             %%% Space after thm head
        {\thmname{#1}\thmnote{ \bfseries #3}}%%% Thm head spec
    \theoremstyle{TheoremNum}

\begin{document}

\title{\textbf{Alternative moduli of twisted curves}}

\author{Daniel Halpern-Leistner, Andres Fernandez Herrero,\linebreak Giovanni Inchiostro, Sara Stephens}

\date{}

\maketitle

\abstract{We define new Artin stack compactifications of the moduli of smooth curves, which parameterize ``relatively polarized curves'' consisting of stacky semistable curves equipped with line bundles that are trivialized away from the rational bridges. Using techniques from birational geometry, we prove that stacks of relatively polarized curves admit the classical coarse space $\overline{M}_g$ of stable curves as a good moduli space. One of our main motivations is the companion paper \cite{paper_2}, where we construct a proper moduli space of $\mathrm{SL}_N$-bundles on relatively polarized curves. This suggests the hypothesis that relatively polarized curves are suitable sources for defining Gromov--Witten theory for Artin stack targets.}

\tableofcontents

%Hide subsections from table of contents in the intro
\addtocontents{toc}{\protect\setcounter{tocdepth}{1}}

\section{Introduction}

The first, and most fundamental, moduli problem studied in algebraic geometry was the moduli of smooth curves of genus $g$. In order to study the topology of that moduli space, one often compactifies it by adding nodal curves as in the Deligne-Mumford compactification $\overline{M}_g$ \cite{deligne_mumford}. Likewise, in enumerative geometry, one compactifies the moduli of smooth curves embedded in a smooth and proper target variety in order to reduce enumerative questions to questions about intersection theory on moduli spaces. In Gromov--Witten theory this compactification, Kontsevich's moduli of stable maps \cite{kontsevich}, again allows smooth curves to degenerate to nodal curves.

The problem of extending the theory of stable maps to define Gromov--Witten invariants for \emph{stacks} has been an organizing question in the subject for some time. There is a very nice answer for smooth and proper Deligne--Mumford (DM) stacks, which involves allowing twisted curves -- nodal curves with $\mu_r$ orbifold structure allowed at the nodes and marked points \cite{abramovich_vistoli_compactifying_stable_maps}. There is also a further generalization to tame algebraic stacks in positive characteristic \cite{twisted_stable_maps}.

For non-orbifold quotients, and specifically for quotients $X/G$ of affine schemes by reductive groups, the most developed theory is the theory of quasi-maps \cite{stablequasimapsgitquotients}. The theory applies to a Neron-Severi class $\ell \in \NS(X/G)_{\bQ}$ for which $G$ acts with finite stabilizers on the $\ell$-semistable locus, $X^{\ell-\rm{ss}} \subset X$, in the sense of geometric invariant theory (GIT). For such an $\ell$, the space of $\ell$-stable quasi-maps parameterizes maps from nodal curves to $X/G$ that land in $X^{\ell-\rm{ss}}$ away from finitely many base points and satisfy a certain stability condition. For a sufficiently large positive scaling of $\ell$, it agrees with the space of stable maps to the GIT quotient orbifold $X^{\ell-\rm{ss}}/G$. Studying wall-crossing, i.e., how these moduli spaces depend on $\ell$, has been significantly complicated by the breakdown of the theory for $\ell$ such that points of $X^{\ell-\rm{ss}}$ have positive dimensional stabilizers.

%The existence of different stability conditions raises the question of wall crossing, i.e., how the moduli spaces change as $\ell$ varies. The wall-crossing technique applied to the change in quasi-maps spaces as one scales $\ell$, which is typically referred to as $\epsilon$-wall-crossing, has been very useful for computing Gromov-Witten invariants of manifolds and orbifolds arising as GIT quotients, because spaces of quasi-maps are often simpler. However, quasi-map wall crossing when $\ell$ crosses a wall in the GIT fan for $X/G$ is less well understood, because that necessitates crossing through an $\ell$ for which point in $X^{\ell-\rm{ss}}$ have positive dimensional stabilizers.

Naively speaking, the ideal answer to this circle of questions would be to identify some very large open substack of the stack $\cM^{\pre}(X/G)$ of nodal curves with a map to $X/G$, and to show that different choices of stability parameters determine a $\Theta$-stratification \cite{halpernleistner2022structure} of this large stack whose semistable locus admits a proper good moduli space, including the spaces of quasi-maps as special cases. Then one could use this stratification to study wall-crossing. However, this structure has not been identified even for the target $BG$, despite the question receiving attention from many perspectives, and progress for $G=\GL_N$ \cite{gieseker-degeneration, Caporaso, nagaraj-seshadri-2, schmitt-hilbert-compactification, kausz_stable_maps, nodal_ns_moduli}.

This paper revisits the foundational assumption that one should focus on open substacks of $\cM^{\pre}(X/G)$. Perhaps instead of plain nodal (twisted) curves, one should consider nodal curves along with some additional structure, which is vacuous when the curve is stable and which has good moduli-theoretic properties. Recall that a twisted curve is called \emph{semistable} if the map to its stabilization contracts only chains of rational curves, called rational bridges, over nodes.

\begin{defn}[Anchored curves]\label{D:rigid_polarized_curves} \label{defn: stack of general curves with line bundles} \label{defn: stack of relatively polarized twisted curves} \label{defn: big stack of perturbed curves}
    The stack of anchored curves $\bigpert_{g,n}$ sends a $k$-scheme $T$ to the groupoid of tuples $(\mathscr{C},\Sigma_{\bullet}, \cL, s: \cL \to \cO_{\mathscr{C}})$, where
    \begin{enumerate}[(1)]
        \item $(\mathscr{C}, \Sigma_{\bullet}) \to T$ is a family of $n$-marked semistable twisted curves of genus $g \ge 2$ as in \Cref{defn:marked-twisted-curve}, with coarse space $(C, \sigma_{\bullet}) \to T$,
        \item $\cL$ is an invertible sheaf on $\mathscr{C}$, and
        \item $s: \cL \to \cO_{\mathscr{C}}$ is a cosection.
    \end{enumerate}
    In addition, this data is required to satisfy the following conditions:
    \begin{enumerate}[(a)]
        \item The stabilizers of $\mathscr{C}$ act faithfully on the fibers of $\cL$, i.e., $\mathscr{C} \rightarrow B\bG_m$ is representable;
        \item $\cL$ is ample relative to the stabilization morphism $(\mathscr{C},\Sigma_\bullet) \to (C^{\st}, \sigma^{\st}_{\bullet})$, meaning that Zariski locally on $T$, there exists some positive integer $N>0$ such that $\cL^{\otimes N}=: L$ descends to a line bundle $L$ on $C$ that is ample relative to $C^{\st}$;
        \item For all $t \in T$ the base-change $s_t: \cL_t \to \cO_{\mathscr{C}_t}$ is an isomorphism at every generic point of $C^{\st}_t$, thought of as a point in $\mathscr{C}_t$.    
    \end{enumerate}
We will let $\mathscr{X}_{g,n}$ denote the stack whose $T$-points consist of only the data (1) and (2) satisfying conditions (a) and (b). We will use the notation $\bigpert_g := \bigpert_{g,0}$.
\end{defn}

We define the \emph{degree} of a line bundle on a twisted curve $\mathscr{C}$ in terms of the Euler characteristic $\chi(-)$,
\[
\deg(\cL) := \frac{1}{N}\left(\chi(\cL^{\otimes N}) - \chi(\cO_{\mathscr{C}}) \right),
\]
where $N$ is any common multiple of the orders of all automorphism groups of $\sC$. For such an $N$, $\cL^{\otimes N}$ descends to the coarse moduli space of $\sC$, and $\chi(\cL^{\otimes N})-\chi(\cO_{\mathscr{C}})$ is the usual degree of the descended bundle. Note that $\deg(\cL)$ is a locally constant function on $\bigpert_g$. We say that an anchored curve $(\mathscr{C}, \cL, s)$ has degree $0$ if $\cL$ has degree $0$.

If $(\sC,\cL,s)$ is a point of $\bigpert_g$ such that the coarse moduli space is stable, then condition (c) above implies that the degree of $\cL$ is non-positive on every component. Therefore, if $\deg(\cL) = 0$, it must be that $s : \cL \to \cO_{\sC}$ is an isomorphism, and then condition (a) implies that the automorphism groups of $\sC$ are trivial. So, the additional data of $\cL$, $s$, and possible orbifold structure are only visible on non-stable curves.

\Cref{D:rigid_polarized_curves} is inspired by the beyond GIT program \cite{halpernleistner2022structure, alper2023existence, guide_beyond_git}, which reduces the construction of a good moduli space to the conditions of $\Theta$- and $S$-completeness. These conditions involve certain regular $2$-dimensional schemes $Y$ with a $\bG_m$-action and a codimension-$2$ fixed point $\mathfrak{o} \in Y$, where the condition is that a morphism $(Y \setminus \mathfrak{o}) / \bG_m \to \cX$ extends uniquely to $Y/\bG_m$. The issue with $\cX = \cM^{\pre}(X/G)$ is that even if an extension exists, it might not be unique.

This is really a pathology with the stack of nodal curves, though. We will see many examples below of families of nodal curves over $Y\setminus \mathfrak{o}$ that extend non-uniquely to families over $Y$. This non-uniqueness is closely related to the failure of the diagonal to be affine (see \Cref{remark: affine diagonal reductions}), which is in turn related to the lack of a canonical ample bundle on a nodal curve. Our \Cref{D:rigid_polarized_curves} addresses this pathology with the stack of all nodal curves. %That is why we include the ample line bundle $\cL$ as part of the data -- there is no canonical way to associate an ample line bundle to a map $C \to X/G$.

\begin{thm}[= \Cref{T:s-complete-and-theta-red}]
    The stacks $\bigpert_{g,n}$ and $\mathscr{X}_{g,n}$ have affine diagonal, and they are $\Theta$- and $S$-complete.
\end{thm}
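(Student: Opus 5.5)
The plan is to treat the three properties separately. Throughout I would use that $\bigpert_{g,n}\to\mathscr{X}_{g,n}$ is affine: it is the relative $\Spec$ of the total space of $\cL$, cut out by the open condition (c) and the closed condition that $s$ is a cosection. With that in hand, affine diagonal, $\Theta$-completeness and $S$-completeness for $\mathscr{X}_{g,n}$ pass to $\bigpert_{g,n}$. For the completeness properties the extra input is that the cosection $s$ extends uniquely across the codimension-$2$ point $\mathfrak{o}$. This follows from Hartogs, since the curve family is flat over a regular $2$-dimensional base and $\cL^\vee$ is reflexive. Condition (c) then only needs checking at $\mathfrak{o}$, where it follows from openness of (c) and the fact that $\mathfrak{o}$ specializes from points where it holds, given a suitable semicontinuity argument on generic points of $C^{\st}$.

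\textbf{Affine diagonal.} Let $(\sC_1,\cL_1)$ and $(\sC_2,\cL_2)$ be two families over $T$. I would show that $\mathrm{Isom}$ is affine over $T$ in three steps.
\begin{enumerate}[(1)]
\item Isomorphisms of twisted curves are first considered together with the line bundle isomorphism $\phi^*\cL_2\cong\cL_1$. By condition (a), $\sC_i\to B\bG_m$ is representable. Hence $\sC_i$ is recovered as the stack quotient $[P_i/\bG_m]$ of the total space $P_i$ of $\cL_i$ minus the zero section, where $P_i$ is an algebraic space.
\item By condition (b), the map $P_i\to C_i^{\st}\times_T P'$ is quasi-affine/affine after taking $N$-th powers. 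So the Isom functor is the space of $\bG_m$-equivariant isomorphisms of $\cO_{C^{\st}}$-algebras $\bigoplus_m \pi_*\cL_i^{m}$ compatible with stabilizations.
\item Stabilization is canonical, so the isomorphism of stable curves is taken from $\mathrm{Isom}(C_1^{\st},C_2^{\st})$, which is finite over $T$. Over it, isomorphisms of graded algebras relatively ample over $C^{\st}$ form an affine scheme. They are a closed subscheme of a vector bundle of homomorphisms in finitely many generating degrees, cut out by the open invertibility condition. Invertibility is controlled by a determinant, which keeps it affine.
\end{enumerate}
The expected subtlety is that this bundle picture requires the pushforwards to be vector bundles. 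I would handle this by passing to high twists of a relatively ample bundle on $C^{\st}$ and using cohomology and base change.

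\textbf{$\Theta$- and $S$-completeness.} Let $Y$ be $\Theta_R$ or $\overline{ST}_R$ for a DVR $R$, and suppose a family $(\sC^\circ,\cL^\circ)$ is given on $(Y\setminus\mathfrak{o})/\bG_m$.
\begin{enumerate}[(1)]
\item First extend the stable model. The stabilizations give a map to $\overline{\mathcal{M}}_{g,n}$. This is separated Deligne--Mumford with affine diagonal, and its coarse space is proper, so it is $\Theta$- and $S$-complete, and $C^{\st}$ extends uniquely.
\item Next write the family relatively over $C^{\st}_Y$ as $\Proj$ of the graded algebra $\mathcal{A}^\circ=\bigoplus_m \pi_*\cL^{\circ\, m}$. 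Then define the extension by taking $j_*$ across the codimension-$2$ locus over $\mathfrak{o}$ and relative $\Proj$ (or its stacky version $[\Spec_{C^{\st}}\mathcal{A}\setminus 0/\bG_m]$). Uniqueness follows from the affine diagonal already proven, which gives separatedness of extensions in this setting, or directly from the fact that any extension's algebra must equal the reflexive hull.
\item The main obstacle is existence: showing the resulting family over $\mathfrak{o}$ is a flat family of semistable twisted curves, so that $\cL$ is invertible, acts faithfully, and is relatively ample. I would attack this locally at nodes of $C^{\st}$. There the semistable model over a node $xy=t$ is toric, and the data over $Y\setminus\mathfrak{o}$ amounts to a $\bG_m$-equivariant (respectively $\bG_m^2$-) toric chain of rational bridges, with $\cL$ given by a piecewise-linear function. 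The extension is then computed combinatorially as a toric degeneration, in the spirit of the birational-geometry techniques mentioned in the abstract. One must verify that the $j_*$ algebra is finitely generated with fibre over $\mathfrak{o}$ a chain of rational curves with $\mu_r$ stacky nodes.
\item Away from the nodes, $C^{\st}\to Y$ is smooth and the family is trivial, so the extension is just $C^{\st}$ with trivial $\cL$.
\end{enumerate}
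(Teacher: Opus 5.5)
\textbf{What works.} Your uniqueness argument is fine. Your route to the affine diagonal, via isomorphisms of graded section algebras over the finite $\mathrm{Isom}$ scheme of the stabilizations, is viable. It differs from the paper's, which maps quasi-compact opens to $\mathfrak{M}^{ss,pol}_{g,n}$ via $L^{N}\otimes\omega^{N^2}$ and uses separatedness of twisted stable maps into an $N$-th root stack. To complete it you still need to show that $\sC$ is the stacky $\Proj$ of its section algebra, and that the graded pieces are flat and commute with base change.

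\textbf{Main gap: existence of fillings for $\mathscr{X}_{g,n}$.} This is the real content of the theorem, and the proposal does not prove it. You are right that the only possible extension is the stacky relative $\Proj$ over $C^{\st}_Y$ of $j_*\mathcal{A}^\circ$: any extension has Cohen--Macaulay total space, so Hartogs forces its section algebra to equal $j_*\mathcal{A}^\circ$. But the theorem is precisely the claim that this $\Proj$ is flat over $Y$, with fibre over $\mathfrak{o}$ a reduced twisted nodal chain on which $\cL$ is ample. Your step (3) names this as the obstacle and announces a toric computation, but does not carry it out. Nothing in the proposal explains why the fibre over $\mathfrak{o}$ does not jump in dimension or become non-reduced, why the stacky points are balanced nodes, or how the bridge components over $Y_u$ and over $Y_v$ are arranged in the chain over $\mathfrak{o}$.

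The paper supplies this mechanism in \Cref{T:s-complete-generically-smooth}:
\begin{enumerate}[(1)]
\item It passes to a minimal resolution and builds a regular semistable extension over $Y$ by equivariant blow-ups, so that $\cL$ extends by regularity.
\item Every central component on which $\cL$ is negative joins a node persisting over $Y_u$ to one persisting over $Y_v$. It is therefore a $(-1,-1)$-curve (\Cref{L:always-is-atiyah-flop}) and can be Atiyah-flopped with an explicit degree rule (\Cref{L:flip-rational-curves}).
\item \Cref{L: nef-algorithm} shows these flops terminate in a relatively nef model. The partial sums $D_j$ are monotone along each branch because $\cL$ is nef over $Y^\circ$, and each flop swaps an adjacent decreasing pair.
\item Degree-$0$ curves are then contracted by \Cref{Lem: semistable_contraction_rel_nef_is_rel_SA}.
\end{enumerate}
The output is your $\Proj$, but flatness and nodality come from the flops and the semiample contraction, not from the algebra. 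A toric version of your argument would have to reproduce exactly this merging of the two branch chains.

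It would also have to cover cases your local analysis omits:
\begin{itemize}
\item Step (4) asserts that $\sC\to C^{\st}$ is an isomorphism away from nodes. This is false for $n>0$, because of rational tails and gerbes at the markings.
\item Bridges can lie over nodes that persist over all of $Y^\circ$; the paper handles these by normalizing and regluing (\Cref{T:s-complete}).
\item Twisted structure must be produced directly by your stacky $\Proj$; the paper instead reduces to the representable case via twisted stable maps to a root stack (\Cref{lemma_extending_stacky_curve}).
\end{itemize}

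\textbf{Secondary gap: condition (c) at $\mathfrak{o}$.} Your argument runs the wrong way. Openness of (c) gives stability under generization, so it says nothing about the closed point $\mathfrak{o}$. Note also that $\bigpert_{g,n}\to\mathscr{X}_{g,n}$ is only quasi-affine, not affine, which is exactly why this check is needed.

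Without $\bG_m$-equivariance the claim is false. Take $C^{\st}_R\times_R\Spec R[t]$ with $\cL^\vee$ sufficiently positive and pulled back from $C^{\st}_R$. For generic sections $\sigma_1,\sigma_2$ of $\cL^\vee$, the cosection $\pi\sigma_1-t\sigma_2$ satisfies (c) on $Y^\circ$ but vanishes identically over $\mathfrak{o}$.

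The paper's \Cref{lemma: complete forgetful} instead proceeds as follows. It uses that $C^{\st}$ is pulled back from $\Spec R$, and finds an open $U\subset\overline{C}^{\st}$, fibrewise dense over $\Spec R$, on which $s^\circ$ is an isomorphism. In the $S$-case this uses quasi-finiteness of $\cY^\circ\to\Spec R$. In the $\Theta$-case it descends $\cL^\circ$ along $V_{\Theta_\eta}\to V_\eta$, which is where equivariance enters. Hartogs on $U_{\cY}$ then finishes the argument.
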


This result inspires the following:

\begin{proposal}\label{PR:main}
    Gromov--Witten style moduli problems for the stacks $X/G$, with $X$ affine and $G$ reductive, should be constructed relative to the stack of degree $0$ anchored curves.
\end{proposal}

In a companion paper to this one \cite{paper_2}, we will realize this proposal when $X/G = B\SL_N$. We enlarge the stack of $\SL_N$ bundles on the universal curve over $\cM_g$ by considering the stack of degree $0$ anchored curves along with an $\SL_N$ bundle satisfying certain open conditions. The enlargement has several nice properties, including a $\Theta$-stratification whose semistable locus admits a proper and flat good moduli space over $\overline{M}_g$.

\bigskip

In the second part of the paper, we study a smaller open substack $\pertgen_g \subset \bigpert_g$. The motivation is that for a degree $0$ point $(\cC,\cL,s)$ of $\bigpert_g$, if the degree of $\cL$ on the rational bridges of $\sC$ is large, then the Cartier divisor $s^\vee :\cO_{\sC} \to \cL^\dual$ on the stable components of $\cC$ can deform inside the corresponding fiber over the stack $\mathfrak{M}^{tw}_g$ of twisted curves. On the other hand, if $(\sC,\cL,s)$ is a point of $\pertgen_g$, then $s^\vee$ is forced to vanish only on the rational bridges \Cref{L:perturbed_curve_basic_properties}, and the divisor on the stable components of $\sC$ is rigid in the fiber over $\mathfrak{M}^{tw}_g$.

By an \emph{orientation} of a rational bridge $\sR \subset \sC$ in a twisted curve, we mean a choice of labels $(p_-,p_+)$ for the two points on the stable components of $\sC$ where $\sR$ meets the rest of the curve. Given an orientation of $\sR$, we let $\ord_{\pm}(\sR)$ denote the order of vanishing of the section $s^\vee$ at $p_{\pm}$ on the stable components, divided by the order of the stabilizer at that point.

\begin{defn}[Relatively polarized curves]\label{D:pertgen}
    The substack $\pertgen_g \subset \bigpert_g$ of \emph{relatively polarized curves} consists of $T$-families $(\sC, \cL, s)$ of degree $0$ anchored curves such that for every geometric point $t \to T$, and for every rational bridge $\sR \subset \sC_t$, with some choice of orientation, one has
    \[
    \ord_{\pm}(\sR) \leq 1 \quad \text{and} \quad \ord_+(\sR) + \ord_-(\sR) = \deg(\cL|_{\sR}).
    \]
\end{defn}

This notion fell out of our work \cite{paper_2} on the stack of $\SL_N$ bundles on degree $0$ anchored curves, when we noticed that the stack we study there lives over the smaller substack $\pertgen_g \subset \bigpert_g$. We will discuss the connection more in \cite{paper_2}

In this paper, we focus on a separate interesting phenomenon, first identified in \cite{ascher2023moduli_CY}: $\pertgen_g$ admits an exhaustive ascending family of open substacks that all have the same good moduli space, $\overline{M}_g$. These substacks are parameterized by the following datum.

\begin{hypothesis}\label{H:stability_condition}
Let $\cB$ be a collection of finite subsets
\[
\cB \subset \{S \subset [-1,1] \cap \bQ \mid |S|<\infty \text{ and } \max(S)>0>\min(S)\}
\]
that satisfies the following conditions:
\begin{enumerate}[(1)]
    \item $S \in \cB \Rightarrow -S \in \cB$
    \item $S,T \in \cB \Rightarrow S \cup T \in \cB$
    \item If $S \in \cB$ and $T \subset S$ with $\max(T)>0>\min(T)$, then $T \in \cB$.
\end{enumerate}
\end{hypothesis}

To any oriented rational bridge $\sR$ in a degree $0$ anchored curve, in \Cref{D:type} we associate a finite subset $\type(\sR) \subset \bQ \cap [-1,1]$ with $\max(\type(\sR))>0>\min(\type(\sR))$. See also \Cref{L:type_determines_isomorphism_class}(2) for an explicit description of the relation between $\type(\sR)$ and the isomorphism class of the pair $(\sR, \cL|_{\sR})$.

\begin{defn}\label{D:stable_rigid_polarized_curves}
    For any $\cB$ as in \Cref{H:stability_condition}, let $\pertgen_g^\cB \subset \pertgen_g$ denote the substack of points $(\sC,\cL,s)$ such that for every rational bridge $\sR \subset \sC$, $\type(\sR) \in \cB$.
\end{defn}

When $\cB = \emptyset$, then $\pertgen_g^{\cB} = \overline{\cM}_g$, and when $\cB$ is the collection of all finite subsets, $\pertgen_g^\cB = \pertgen_g$. Another way to produce such an $\cB$ is to choose an arbitrary subset $X \subset [-1,1] \cap \bQ$ such that $-X = X$, and let $\cB$ be the collection of all finite subsets $S \subset X$ with $\max(S)>0>\min(S)$.

\begin{thm}\label{T:perturbed_curves_main}
    For any $\cB$ as in \Cref{H:stability_condition}, $\pertgen_g^\cB \subset \bigpert_g$ is an open substack that is $\Theta$- and $S$-complete. If $\cB$ is finite, then $\pertgen_g^\cB$ is bounded and $\pertgen_g^\cB \to \overline{M}_g$ is a good moduli space.
\end{thm}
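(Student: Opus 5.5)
The plan is to reduce everything to the structure of $\bigpert_g$ established in \Cref{T:s-complete-and-theta-red}, together with the beyond-GIT existence criterion of \cite{alper2023existence}. That criterion says an algebraic stack of finite type with affine diagonal, $\Theta$-complete and $S$-complete, admits a separated good moduli space. The argument has four steps: openness, $\Theta$- and $S$-completeness, boundedness, and identification of the good moduli space with $\overline{M}_g$.

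\emph{Openness.} The degree of $\cL$ is locally constant, so degree $0$ is open. For the conditions in \Cref{D:pertgen} and \Cref{D:stable_rigid_polarized_curves}, I would show that they are preserved under generization. Under generization, rational bridges either disappear (their nodes smooth) or merge into longer chains, or bridges partly smooth into stable components. The numbers $\ord_\pm$ can only drop or stay the same, because the vanishing divisor of $s^\vee$ specializes. The type of a generized bridge should be a subset of the union of the types of the bridges it came from. Properties (2) and (3) of \Cref{H:stability_condition} are designed exactly for this. Since these are constructible conditions stable under generization, they define an open substack.

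\emph{$\Theta$- and $S$-completeness.} Because $\bigpert_g$ is $\Theta$- and $S$-complete with affine diagonal, for an open substack it suffices to check one thing. Given a map $(Y\setminus\mathfrak{o})/\bG_m \to \pertgen^\cB_g$, its unique extension to $Y/\bG_m\to\bigpert_g$ must send $\mathfrak{o}$ into $\pertgen^\cB_g$. Here $Y=\Theta_R$ or $\overline{ST}_R$ for a DVR $R$. The point $\mathfrak{o}$ is a $\bG_m$-fixed curve, and it is the limit along $\bG_m$-orbits of points lying in $\pertgen^\cB_g$.

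I would analyze the fixed curve through its $\bG_m$-equivariant structure. The weights of the $\bG_m$-action on the fixed curve, and the way the bridges at $\mathfrak{o}$ arise, are controlled by the bridges and orders of vanishing over the punctured locus. I expect the type of each new bridge at $\mathfrak{o}$ to be a sub-union of types already present, possibly negated because orientations get reversed, which is why property (1) is needed. The equality $\ord_++\ord_-=\deg$ and the bound $\ord_\pm\le1$ should then follow from a local computation at the nodes. This local combinatorial analysis of fixed points, using \Cref{L:type_determines_isomorphism_class}, is the main obstacle. I would first treat the $\Theta$ case, where the fixed curve is a degeneration along a one-parameter subgroup, and then reduce the $S$ case to it.

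\emph{Boundedness and the moduli space.} For finite $\cB$, only finitely many types occur. By \Cref{L:type_determines_isomorphism_class}, this bounds the lengths of the bridges, their stabilizer orders, and the degrees of $\cL$ on them. Combined with the boundedness of $\overline{M}_g$, this gives finite type. The existence criterion then produces a good moduli space $\pertgen^\cB_g\to M$.

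The stabilization map $\pertgen^\cB_g\to\overline{\cM}_g\to\overline{M}_g$ factors through $M$, giving $M\to\overline{M}_g$. This map is an isomorphism over the stable locus. It is injective on points, because every point degenerates isotrivially, via the $\bG_m$-actions scaling the bridges, to a closed point, and the closed points are exactly the stable curves; that is, $\overline{\cM}_g\subset\pertgen^\cB_g$ is the locus of closed points. The map is also proper, because $\pertgen^\cB_g$ satisfies the existence part of the valuative criterion: stable limits lift by taking $\cL$ trivial. Finally $\overline{M}_g$ is normal, so Zariski's main theorem makes $M\to\overline{M}_g$ an isomorphism.
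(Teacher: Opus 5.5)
Your outline (openness, completeness, boundedness via \Cref{L:type_determines_isomorphism_class}, the beyond-GIT existence theorem, identification with $\overline{M}_g$) matches the paper's, and boundedness is argued the same way. However, two steps have genuine gaps.

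\textbf{The identification of the good moduli space fails as written.} Your claim that $\overline{\cM}_g$ is the locus of closed points, with every point degenerating isotrivially to a stable curve, is false. Since $\overline{\cM}_g\subset\pertgen^\cB_g$ is open, no point carrying a rational bridge can have a point of $\overline{\cM}_g$ in its closure. Specialization goes the other way: blowing up $C^{\st}\times\bA^1$ at $(x,0)$ for a node $x$ and taking $\cL=\cO(-E)$ degenerates $(C^{\st},\cO,1)$ inside $\pertgen_g$ to a curve with a two-component bridge. So injectivity of $M\to\overline{M}_g$ is unproved. Zariski's main theorem would moreover need $M$ to be integral, and nothing in your argument rules out components of $\pertgen^\cB_g$ lying entirely over $\partial\overline{\cM}_g$.

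The missing ingredient is \Cref{prop: pert curves stack is integral}. First, $\pertgen_g$ has a quasi-smooth derived enhancement of virtual relative dimension $0$ over $\mathfrak{M}^{tw,ss}_g$ (\Cref{obsevation: deformation theory of pert curves}). Second, a fibre count shows $\st$ is flat: curves with $\ell$ bridges form a codimension-$\ell$ locus in a fibre of $\mathfrak{M}^{tw,ss}_g\to\overline{\cM}_g$, while for fixed types the gluing data for $\cL$ and the cosections contribute at most $(\ell-r)+r=\ell$ dimensions. Hence $\pertgen^\cB_g$ is integral with $\overline{\cM}_g$ dense. Then no analysis of closed points, properness, or Zariski's main theorem is needed. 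The composite $\overline{M}_g\to M\to\overline{M}_g$ is the identity, so $\overline{M}_g\to M$ is a closed immersion with dense image in the integral space $M$, hence an isomorphism.

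\textbf{The $\Theta$/$S$-completeness step carries the content of the theorem but is left as an expectation.} The idea you are missing is the rational model of \Cref{L:rational_model_type_calculation}.
\begin{enumerate}[(i)]
\item First check that the cosection over $\mathfrak{o}$ vanishes only on bridges. Its zero locus has codimension $\le1$, so otherwise it would meet non-bridge points over $Y^\circ$.
\item Glue the formal neighbourhood of a node $x\in C^{\st}_0$ to the versal family $P^{\rm un}$. This produces a $\bG_m$-equivariant family of twisted chains $\sP\to Y$ with $\ord_\pm(\sR_y)=-\deg(\cL_\sP|_{\sP^\pm_y})$, and with $\type(\sR_y)$ equal to the set of degrees of connected proper subchains containing $\sP^-_y$.
\item Constancy of $\deg(\cL_\sP|_{\sP_y})$ in $y$ propagates $\ord_++\ord_-=\deg$.
\item Each node of $\sP_{\mathfrak{o}}$ persists over $Y_u$ or $Y_v$, so every such subchain over $\mathfrak{o}$ is a flat limit of one over $\eta_u$ or $\eta_v$. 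This gives $\type(\sR_{\mathfrak{o}})=\type(\sR_u)\cup\type(\sR_v)$ and $\ord_\pm\le1$ (\Cref{L:pert_completeness}).
\end{enumerate}
Orientations are induced by the model, so nothing is negated. Property (2) is what enters; property (1) only makes membership in $\cB$ orientation-independent. Also, $\Theta_R$ and $\overline{ST}_R$ are handled uniformly, with no reduction of one to the other.

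For openness, the same model gives $\type(\sR_\eta)\subset\type(\sR_0)$. Bridges never merge under generization, contrary to what you wrote. Preserving $\ord_++\ord_-=\deg$ under generization is exactly where $\ord_\pm\le1$ is used, to exclude zeros of $s_\eta$ limiting into $\sR_0$ (\Cref{L:pertgen_closed_under_generization}).

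\textbf{A caveat that also affects the paper's final deduction.} Your ``sub-union of types already present'' is not quite right when a node persists on one branch without a bridge: that branch contributes $\{0\}$. Take $\cB=\{\{-1,1\}\}$, which satisfies \Cref{H:stability_condition}, and a stable $R$-family with smooth generic fibre and an $A_1$ node $xy=\pi^2$. Then $(C^{\st},\cO,1)$ and $(C',\cO_{C'}(-R_1),\mathrm{incl})$, with $C'$ the minimal resolution, are $R$-points of $\pertgen^\cB_g$ with the same generic fibre. Yet the $\overline{ST}_R$-filling has a bridge of type $\{-1,0,1\}\notin\cB$ over $\mathfrak{o}$. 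So the argument, and the statement, need an extra closure condition such as $S\in\cB\Rightarrow S\cup\{0\}\in\cB$.
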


\subsection*{Notation and conventions}
We work over a field $k$ of characteristic $0$. Unless otherwise stated, all of our schemes and algebraic stacks are implicitly assumed to be locally Noetherian. We use the term DVR to refer to a discrete valuation ring. Given a local ring $A$, we will denote by $A^{\sh}$ its strict henselization. For an algebraic space $X$ equipped with an action of a group scheme $G$, we denote the quotient stack by $X/G$ (as opposed to $[X/G]$).

Throughout the paper, we fix an integer $g \geq 2$, which will be the genus of the curves in our moduli problems.

\subsection*{Acknowledgements} We would like to thank Chiara Damiolini, Stefano Filipazzi, Roberto Fringuelli, Jochen Heinloth, Jacob Keller, Filipo Viviani, and Junyan Zhao for useful conversations related to the contents of this paper. SS was partially supported by an NSERC PGS-D scholarship. GI was partially supported by NSF grant DMS-2502104. DHL was partially supported by NSF CAREER grant DMS-1945478. DHL used Chat GPT 5 and Astra to prove \Cref{L: nef-algorithm} and to work through and generalize the main result of \cite{hall2023mayer} in \Cref{appendix}.

\addtocontents{toc}{\protect\setcounter{tocdepth}{2}}

\section{Anchored curves}
In this section we define the stack $\bigpert_{g,n}$ of anchored curves, see \Cref{defn: big stack of perturbed curves}. Roughly speaking, $\bigpert_{g,n}$ parameterizes $n$-marked twisted semistable curves $\mathscr{C}$ of genus $g$ equipped with a line bundle $\cL$ that is ample relative to the stabilization and is trivialized generically on each irreducible component of $\mathscr{C}$ that is not contracted by the stabilization morphism. The main theorem of this section is \Cref{T:s-complete-and-theta-red}, where we prove that the stack $\bigpert_{g,n}$ satisfies the $\Theta$-completeness and $S$-completeness conditions introduced in \cite{alper2023existence}.

\subsection{The Moduli of Anchored Curves}

\subsubsection*{Preliminaries on Twisted Curves}

We denote by $\mathfrak{M}_{g, n}$ the smooth stack of prestable marked curves of genus $g$ with $n$ marked points. For a marked prestable curve $(C, (\sigma_i)_{i=1}^n)$ defined over a field, we denote by $\omega_C$ the canonical line bundle of $C$. We call $\omega_C(\sum_{i=1}^n \sigma_i)$ the log canonical bundle. We consider two open substacks of $\mathfrak{M}_{g,n}$:
\begin{itemize}
    \item The substack $\overline{\cM}_{g, n}$ of stable curves; curves whose log canonical bundle is ample.
    \item The substack $\mathfrak{M}_{g, n}^{ss}$ of semistable curves; curves whose log canonical bundle is nef.
\end{itemize}

The stack of stable curves $\overline{\cM}_{g,n}$ is known to be smooth, proper, Deligne--Mumford and irreducible. The open inclusion $\overline{\cM}_{g,n} \hookrightarrow \mathfrak{M}_{g,n}$ admits a retract $\st: \mathfrak{M}_{g,n} \to \overline{\cM}_{g,n}$ called the stabilization morphism, which sends a family of marked curves $(C, (\sigma_i)_{i=1}^n)$ to its stabilization $(C^{\st}, (\sigma_i^{\st})_{i=1}^n)$. The morphism $\st: \mathfrak{M}_{g,n} \to \overline{\cM}_{g,n}$ of smooth stacks is syntomic by \cite[Prop. 3]{behrend-gromov-witten-theory}, and is of relative dimension $0$ by standard deformation theory of nodal curves. 

Given a prestable marked curve $(C, \sigma_{\bullet}) \in \mathfrak{M}_{g,n}$ defined over an algebraically closed field, recall that a maximal connected subcurve $R \subset C$ contracted by the stabilization morphism $(C, \sigma_{\bullet}) \to (C^{\st}, \sigma_{\bullet}^{\st})$ is called a rational bridge if it is an unmarked chain of rational curves such that the intersection $R \cap \overline{C \setminus R}$ consists of exactly two closed points of $C$ (cf. \cite[\href{https://stacks.math.columbia.edu/tag/0E3M}{Tag 0E3M}]{stacks-project}). We call these two points the end points of the bridge. On the other hand, such a maximal connected subcurve $R \subset C$ is called a rational tail if it is a chain of rational curves such that $R \cap \overline{C \setminus R}$ consits of one point, which we call the attaching point of the tail (cf. \cite[\href{https://stacks.math.columbia.edu/tag/0E3G}{Tag 0E3G}]{stacks-project}). If $R \subset C$ is a rational tail, then there is a unique irreducible component $\Sigma \subset R$ such that $\omega_C|_{\Sigma}$ is negative; we call this the terminal component of the tail.

A marked curve $(C, \sigma_{\bullet}) \in \mathfrak{M}_{g,n}$ defined over an algebraically closed field is semistable if and only if any maximal connected reduced subcurve $R \subset C$ contracted by the stabilization morphism $C \to C^{\st}$ is either:
\begin{itemize}
    \item a rational bridge (with no markings), or
    \item a rational tail that contains exactly one marking located in its terminal component.
\end{itemize} 

Recall the following notion of a $n$-marked twisted curve as in \cite[Def. 1.2]{olsson-twisted-curve}.

\begin{defn} \label{defn:marked-twisted-curve}
An $n$-marked twisted curve of genus $g$ over a $k$-scheme $T$ is a morphism $f: \mathscr{C} \to T$ together with a
tuple of disjoint closed substacks $(\Sigma_i)_{i=1}^n$ of $\mathscr{C}$ such that:
\begin{enumerate}[(1)]
\item $\mathscr{C}$ is a proper, finitely presented, with connected one-dimensional geometric fibers.  Its coarse moduli space $\mathscr{C} \to C$ is a nodal curve of genus $g$ over $T$.
\item Each $\Sigma_i$ is an \'etale gerbe over T, and
it's image in $C$ is contained in the smooth locus of
$C \to T$.
\item If $\mathcal{U} \subset \mathscr{C}$ denotes the complement of the $\Sigma_i$ and the singular locus of $\mathscr{C} \to T$, then $\mathcal{U} \to C$ is an open immersion.
\item For every geometric point $\bar t \to T$ whose image lies on a marking, there is an integer $r \ge 1$ such that \'etale locally, the fiber $\mathscr{C}_{\bar t}$ near that marking is isomorphic to
\[
\Spec(k(\bar t)[z])/\mu_r,
\]
where $\zeta \in \mu_r$ acts by $z \mapsto \zeta z$,
\item For every geometric point $\bar t \to T$ whose image is a node, there is an integer $r \ge 1$ such that \'etale locally, $\mathscr{C}_{\bar t}$ near that node is isomorphic to
\[
\Spec(k(\bar t)[x,y]/(xy))/\mu_r,
\] where $\zeta \in \mu_r$ acts by $ (x, y) \mapsto (\zeta x, \zeta^{-1} y)$.
\end{enumerate}
\end{defn}
We denote by $\mathfrak{M}^{tw}_{g, n}$ the smooth algebraic stack parameterizing families of $n$-marked twisted curves of genus $g$ \cite[Thm. 1.10]{olsson-twisted-curve}. The open immersion $\mathfrak{M}_{g, n} \hookrightarrow \mathfrak{M}^{tw}_{g, n}$ admits a retract $c: \mathfrak{M}^{tw}_{g, n} \to \mathfrak{M}_{g, n}$ given by sending a family of marked twisted curves to its coarse space. It follows from the local description in \cite[\S5]{olsson-twisted-curve} that the morphism $c$ is syntomic of relative dimension $0$. We denote by $\mathfrak{M}^{tw,ss}_{g, n} := c^{-1}(\mathfrak{M}^{ss}_{g, n}) \subset \mathfrak{M}^{tw}_{g, n}$ the open substack of marked twisted curves whose coarse space is semistable. We call families in $\mathfrak{M}^{tw,ss}_{g, n}$ marked twisted semistable curves. The stabilization morphism for twisted semistable curves is the composition $\st: \mathfrak{M}_{g, n}^{tw,ss} \xrightarrow{c} \mathfrak{M}_{g, n}^{ss} \xrightarrow{\st} \overline{\cM}_{g, n}$, which is syntomic of relative dimension $0$. Given a marked twisted semistable curve over an algebraically closed field $\mathscr{C}$,  a reduced closed substack $\mathscr{R} \subset \mathscr{C}$ is called a rational bridge (resp. rational tail) if its image under the coarse space morphism $\mathscr{C} \to C$ is a rational bridge (resp. rational tail). We say that $\mathscr{C}$ has $\ell$ rational bridges (resp. tails) if its coarse space $C$ does. Given a rational bridge $\mathscr{R} \subset \mathscr{C}$, its end points are defined to be the preimages of the end points of the corresponding rational bridge $R \subset C$ in the coarse space. Similarly, we define the attaching point of a rational tail $\mathscr{R} \subset \mathscr{C}$ as the preimage of the attaching point in the coarse space.

\begin{remark} \label{remark: duality}
    Given a twisted $n$-marked semistable curve $(\mathscr{C}, \Sigma_{\bullet}) \to S$ over a scheme $S$, we will usually denote by $\varphi: (\mathscr{C}, \Sigma_{\bullet}) \to (C, \sigma_{\bullet})$ its coarse space and by $\pi: (C, \sigma_{\bullet}) \to (C^{\st}, \sigma_{\bullet}^{\st})$ the stabilization of the coarse space. If the number of markings is $n=0$, then we have identifications of canonical bundles $\omega_{\mathscr{C}/S} \cong \varphi^*(\omega_{C/S}) = \varphi^*\pi^*(\omega_{C^{\st}/S})$, where the first equality follows from \cite[Prop. 2.5.1]{chiodo-twisted-curves} and the second equality follows from the curve $C$ being semistable. In particular, Grothendieck duality implies that for any given vector bundles $\cE$ and $\cF$ on $C^{\st}$ and $\mathscr{C}$ respectively, there is a canonical correspondence between morphisms $\cF \to \varphi^*\pi^*(\cE)$ and morphisms $\pi_*\varphi_*(\cF) \to \cE$. We shall use this fact at several points in the paper.
\end{remark}

\subsubsection*{The Moduli of Anchored Curves}
Recall the stacks $\bigpert_{g,n}$ and $\mathscr{X}_{g,n}$ from \Cref{defn: big stack of perturbed curves}.

\begin{prop}\label{p:forget-B-to-X} \label{prop: big stack of perturbed curves is algebraic}
    $\mathscr{X}_{g,n}$ and $\bigpert_{g,n}$ are algebraic stacks locally of finite type and with affine diagonal. The forgetful morphism
    \[\bigpert_{g,n} \to \mathscr{X}_{g,n}, \; \; \; (\mathscr{C},\Sigma_{\bullet}, \cL, s: \cL \to \cO_{\mathscr{C}}) \mapsto (\mathscr{C},\Sigma_{\bullet}, \cL) \]
    is quasiaffine and of finite type.
\end{prop}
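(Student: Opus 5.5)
We first treat $\mathscr{X}_{g,n}$ as a substack of the stack $\mathrm{Pic}(\mathfrak{C}^{tw}/\mathfrak{M}^{tw,ss}_{g,n})$ of line bundles on the universal twisted semistable curve. The base $\mathfrak{M}^{tw,ss}_{g,n}$ is algebraic and locally of finite type, by \cite[Thm. 1.10]{olsson-twisted-curve} together with openness of semistability of the coarse space. The universal family $\mathfrak{C}^{tw} \to \mathfrak{M}^{tw,ss}_{g,n}$ is proper, flat, of finite presentation and cohomologically flat in dimension $0$. Hence the relative Picard stack of line bundles on fibers is algebraic and locally of finite type over the base; this is standard, and can be seen for instance as the open substack of the Hom-stack to $B\mathbb{G}_m$ using representability results for Hom stacks out of proper tame stacks.

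Conditions (a) and (b) are both open. For (a), representability of $\mathscr{C} \to B\mathbb{G}_m$ is open because the locus where an inertia map of finite groups is injective is open, and the stabilizers are $\mu_r$ at nodes and markings, so it suffices to check this at finitely many gerbes. For (b), relative ampleness of $L=\cL^{\otimes N}$ over $C^{\st}$ is open by the standard openness of relative ampleness for proper morphisms. This shows that $\mathscr{X}_{g,n}$ is algebraic and locally of finite type.

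Next we handle the forgetful map. Over $\mathscr{X}_{g,n}$, the datum of a cosection $s$ is a section of $\mathcal{H}om(\cL,\cO)=\cL^\vee$. By cohomology and base change on the tame stack $\mathscr{C}$, the functor of sections of $f_*\cL^\vee$ is represented by a linear scheme $\mathbb{V}(\mathcal{F})=\Spec \mathrm{Sym}\,\mathcal{F}$, affine of finite type over the base. Here $\mathcal{F}$ is coherent; take, e.g., a two-term complex $K^0 \to K^1$ computing $Rf_*\cL^\vee$ locally, and set $\mathcal{F}=\mathrm{coker}((K^1)^\vee\to (K^0)^\vee)$. Condition (c) is open in this affine cone. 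It asks that $s_t$ be nonvanishing at the generic points of the stable components, equivalently that $s$ does not vanish identically on the closure of any non-contracted component. The stable components vary in families of fibers of $\mathscr{C} \to C^{\st}$, so the bad locus is the image under a proper map of a closed vanishing locus, and is therefore closed. Thus $\bigpert_{g,n} \to \mathscr{X}_{g,n}$ is an open immersion into an affine morphism, which makes it quasiaffine and of finite type.

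For the affine diagonal it suffices to treat $\mathscr{X}_{g,n}$. The diagonal of $\bigpert_{g,n}$ factors through the base change of the diagonal of $\mathscr{X}_{g,n}$ followed by the diagonal of a quasiaffine (hence separated, quasi-affine) morphism; the latter is a closed immersion, so it is affine. For $\mathscr{X}_{g,n}$, given two objects $(\mathscr{C}_i,\cL_i)$ over an affine $T$, the Isom functor maps to $\mathrm{Isom}_T(\mathscr{C}_1,\mathscr{C}_2)$, with fibers $\mathrm{Isom}(\phi^*\cL_2,\cL_1)$. These fibers are $\mathbb{G}_m$-torsors over the locus where the two bundles are isomorphic, and that locus is closed because it is a locally closed condition made closed by properness and the seesaw principle. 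Hence this second map is affine. The main obstacle is that $\mathrm{Isom}_T(\mathscr{C}_1,\mathscr{C}_2)$ is not affine for semistable curves, since rational bridges carry $\mathbb{G}_m$ automorphisms. The line bundle fixes this. We would embed via $\cL$: isomorphisms of pairs are the same as $\mathbb{G}_m$-equivariant isomorphisms of the total spaces, and by relative ampleness (b) these are isomorphisms of the relative Proj $\mathscr{C}_i \hookrightarrow$ a $\mathbb{P}$-bundle over $C_i^{\st}$, compatible with the map on stabilizations. Since $\overline{\cM}_{g,n}$ has finite diagonal, $\mathrm{Isom}(C^{\st}_1,C^{\st}_2)$ is finite over $T$. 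Over it, the isomorphisms of pairs become a closed subscheme of the isomorphisms of the graded $\cO_{C^{\st}}$-algebras $\bigoplus_m \pi_*\varphi_*\cL_i^{\otimes mN}$, together with the finitely many orbifold-level data. This is a closed subscheme of an affine group scheme $\GL$ of finitely generated pushforward modules, and is therefore affine. The delicate point is making this generation argument uniform over $T$, which should follow from Zariski-local choice of $N$ and cohomology and base change after twisting by a fixed ample bundle pulled back from $C^{\st}$.
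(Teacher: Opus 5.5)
Your treatment of algebraicity and of the forgetful morphism follows the paper's proof. You realize $\mathscr{X}_{g,n}$ as an open substack of the mapping stack to $B\mathbb{G}_m$, cosections form a linear scheme affine of finite type, and you deduce the affine diagonal of $\bigpert_{g,n}$ from that of $\mathscr{X}_{g,n}$ plus quasi-affineness. One step in the openness of (c) needs correcting. The bad locus is not the image in $T$ of $V(s)$: by relative ampleness $s$ vanishes identically on every rational bridge, so that image contains every point with a bridge. You must instead take the closed image $Z\subset C^{\st}$ of $V(s)$ and the closed subset $Z_1\subset Z$ where the fibers of $Z\to T$ have dimension $\geq 1$ (upper semicontinuity). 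The bad locus is then the image of $Z_1$ in $T$, as in the paper.

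For the affine diagonal of $\mathscr{X}_{g,n}$ you take a different route from the paper. The paper maps a quasi-compact open to $\mathfrak{M}^{ss,pol}_{g,n}$ via the ample bundle $\cL^{\otimes N}\otimes\omega_C(\sum_i\sigma_i)^{\otimes N^2}$ on the coarse curve, and cites that this stack has affine diagonal. It then gets a \emph{finite} relative diagonal by embedding into twisted stable maps to an $N$-th root stack, which are separated and DM over that base \cite{abramovich_vistoli_compactifying_stable_maps}. Your reduction to the finite scheme $J=\mathrm{Isom}_T(C^{\st}_1,C^{\st}_2)$, followed by a section-ring argument over $C^{\st}_J$, works for the coarse data. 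The orbifold step, however, is a genuine gap as written.

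The map from isomorphisms of pairs to isomorphisms of the Veronese algebras $\bigoplus_m\pi_*\varphi_*\cL_i^{\otimes mN}$ is not a closed immersion, and not even a monomorphism. For instance, $(\mathrm{id},\zeta)$ with $\zeta\in\mu_N$ acts trivially on every $\cL^{\otimes mN}$. Declaring the remaining ``orbifold-level data'' to be finitely many only gives quasi-finiteness. Affineness needs this layer to be a finite morphism, i.e.\ a properness statement, and that is exactly what the paper imports from twisted stable maps.

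The cleanest repair within your approach is to use the full section ring $\mathcal{A}_i=\bigoplus_{m\geq 0}\pi_*\varphi_*\cL_i^{\otimes m}$ rather than its $N$-th Veronese. By (a), $\mathscr{C}_i\to B\mathbb{G}_m\times C^{\st}$ is representable, and by (b), $\cL_i$ is relatively ample. Hence $(\mathscr{C}_i,\cL_i)$ is the stacky relative $\mathrm{Proj}$ of $\mathcal{A}_i$ over $C^{\st}$ (the $\mathbb{G}_m$-quotient of the punctured relative cone) together with its $\cO(1)$. So isomorphisms of pairs over a fixed isomorphism of stabilizations are exactly graded-algebra isomorphisms (plus the closed condition of matching markings), with no separate orbifold data.

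Each graded piece is flat over the base, with formation commuting with base change, by vanishing of $R^1$ along the contracted chains. Locally, $\mathcal{A}_i$ is generated and presented in bounded degree. This functor is therefore a closed subscheme of finitely many $\mathrm{Isom}$-schemes of flat coherent sheaves on the proper $C^{\st}_J\to J$, hence affine over $J$. With this change your argument is a self-contained alternative that avoids both the citation for $\mathfrak{M}^{ss,pol}_{g,n}$ and the twisted-stable-maps input.
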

\begin{proof}
     Let us first check that $\mathscr{X}_{g,n}$ is an algebraic stack locally of finite type over $k$. If we denote by $(\mathscr{C}_{univ}, \Sigma_{\bullet}) \to \Map_{\mathfrak{M}^{tw,ss}_g}$ the universal family, then the mapping stack $\Map_{\mathfrak{M}^{tw,ss}_g}(\mathscr{C}_{univ}, B\mathbb{G}_m)$ is algebraic and locally of finite type over $k$ by \cite[Thm. 1.2]{hall-rydh-tannaka}. Note that $\Map_{\mathfrak{M}^{tw,ss}_g}(\mathscr{C}_{univ}, B\mathbb{G}_m)$ parameterizes pairs of a twisted semistable curve $(\mathscr{C}, \Sigma_{\bullet})$ and a line bundle $\cL$ on $\mathscr{C}$. The inclusion $\mathscr{X}_{g,n} \hookrightarrow \Map_{\mathfrak{M}^{tw,ss}_g}(\mathscr{C}_{univ}, B\mathbb{G}_m)$ is an open immersion, because of the following:
     \begin{itemize}
     \item The condition on $\cL$ being ample relative to the stabilization is open.
      \item The condition on the representability of the morphism $f: \mathscr{C} \to B\mathbb{G}_m$ corresponding to $\cL$ is open. Indeed, the relative inertia $I_{f}$ is a closed substack of the inertia stack $I_{\mathscr{C}}$, which is finite and representable over $\mathscr{C}$. The identity section $\mathscr{C} \to I_f$ induces surjection $\cO_{I_f} \twoheadrightarrow \cO_{\mathscr{C}}$ of coherent sheaves on $\mathscr{C}$. Let $\cK$ denote the kernel of this surjection; it is a coherent sheaf on $\mathscr{C}$ whose formation commutes with base-change by the flatness of $\cO_{\mathscr{C}}$. Then the locus of geometric points of $T$ where the morphism $f$ is representable is the open complement of the image of the support $\mathrm{Supp}(\cK) \subset \mathscr{C}$ under the proper morphism $\mathscr{C} \to T$.
    \end{itemize}
    In particular, $\mathscr{X}_{g,n}$ is algebraic and locally of finite type over $k$.
     
     Next, we show that $\mathscr{X}_{g,n}$ has affine diagonal. It is enough to show that any quasi-compact open substack $\mathscr{U} \subset \mathscr{X}_{g,n}$ has affine diagonal. By quasi-compactness, there exists some number $N$ such that for all $(\mathscr{C}, \Sigma_{\bullet}, \cL)$ families in $\mathscr{U}$, the line bundle $\cL^{\otimes N}$ descends to the coarse space $(C, \sigma_{\bullet})$ and $\cL^{\otimes N} \otimes \omega_C(\sum_{i =1}^n \sigma_i)^{\otimes N^2}$ defines an ample line bundle on $C$. Hence, we have a well-defined assignment 
    \[ F: \mathscr{U} \to \mathfrak{M}^{ss,pol}_{g,n}, \; \; \; (\mathscr{C}, \Sigma_{\bullet},\cL) \mapsto (C, \sigma_{\bullet}, \cL^{\otimes N} \otimes \omega_C(\sum_{i =1}^n \sigma_i)^{\otimes N^2})\]
    to the stack $\mathfrak{M}^{ss,pol}_{g,n}$ of semistable curves equipped with an ample line bundle. Since $\mathfrak{M}^{ss,pol}_{g,n}$ has affine diagonal \cite[Prop. 2.3]{nodal_ns_moduli}, it suffices to show that $F$ has affine relative diagonal. Let $(C_{univ}, \sigma_{\bullet}, \cM) \to \mathfrak{M}^{ss,pol}_{g,n}$ denote the universal polarized curve. We denote by $\mathscr{Z} \to C_{univ}$ the $N^{th}$-root stack of the line bundle $\cM \otimes \omega_{C_{univ}/\mathfrak{M}^{ss, pol}}(\sum_{i=1}^n \sigma_i)^{\otimes (-N^2)}$. Then $\mathscr{Z} \to C_{univ} \to \mathfrak{M}^{ss,pol}_{g,n}$ is a relatively DM morphism which is locally projective. Hence we may define the stack $\cK_{g,n}(\mathscr{Z}/\mathfrak{M}^{ss,pol}_{g,n})$ of $n$-marked twisted stable maps relative to $\mathfrak{M}^{ss,pol}_{g,n}$ as in \cite[\S8.3]{abramovich_vistoli_compactifying_stable_maps}. By the definition of $\mathscr{X}_{g,n}$, there is an inclusion $\mathscr{X}_{g,n} \hookrightarrow \cK_g(\mathscr{Z}/\mathfrak{M}^{ss,pol})$ which realizes $\mathscr{X}_{g,n}$ as the locally closed substack of $T$-families of twisted stable maps $(\mathscr{C}, \Sigma_{\bullet}) \to \mathscr{Z}_T$ such that the morphism of $T$-flat families $(\mathscr{C}, \Sigma_{\bullet}) \to ((C_{univ})_T, \sigma_{\bullet}|_T)$ is a coarse space morphism compatible with the markings. In particular, since the stack of twisted stable maps $\cK_{g,n}(\mathscr{Z}/\mathfrak{M}^{ss,pol}_{g,n})$ is separated over $\mathfrak{M}^{ss,pol}_{g,n}$ \cite[Thm.1.4.1+\S8.3]{abramovich_vistoli_compactifying_stable_maps}, the same holds for $\mathscr{X}_{g,n}$.

     We are left to show that the forgetful morphism $\bigpert_{g,n} \to \mathscr{X}_{g,n}$ is quasiaffine and of finite type. Let $\mathfrak{Y} \to \mathscr{X}_{g,n}$ denote the stack parameterizing triples $(\mathscr{C}, \Sigma_{\bullet}, \cL, s: \cL \to \cO_{\mathscr{C}})$, where $(\mathscr{C}, \Sigma_{\bullet}, \cL)$ is a point in $\mathscr{X}_{g,n}$ and $s$ is a cosection. By \cite[\href{https://stacks.math.columbia.edu/tag/08K6}{Tag 08K6}]{stacks-project}, it follows that the forgetful morphism $\mathfrak{Y} \to \mathscr{X}_{g,n}$ is affine and of finite type. Furthermore, the inclusion $\bigpert_{g,n} \hookrightarrow \mathfrak{Y}$ is an open immersion because of the condition of $s$ being nonvanishing at every generic pont of the stabilization is open. Indeed, let $\varphi: \mathscr{C} \to C^{\st}$ denote the stabilization of the coarse space, and consider the closed image $Z \subset C^{\st}$ of the vanishing locus $V(s) \subset \mathscr{C}$ under the proper morphism $\varphi$. By upper semicontinuity of fiber dimension, there is a closed subset $Z_1 \subset Z$ of points where the dimension of the fiber of $Z \to T$ is at least $1$. Then the locus of geometric points $\overline{t} \to T$ where the cosection satisfies this generic vanishing condition is the open complement of the image of the proper morphism $Z_1 \to T$.
    We conclude that the composition $\bigpert_{g,n} \hookrightarrow \mathfrak{Y} \to \mathscr{X}_{g,n}$ is quasiaffine and of finite type. 
\end{proof}

\begin{defn} \label{defn: non-stacky relatively polarized twisted curves} 
We define $\mathscr{X}_{g,n}^{\rep} \subset \mathscr{X}_{g,n}$ (resp. $\bigpert_{g,n}^{\rep} \subset \bigpert_{g,n}$) to be the substack that for any $k$-scheme $T$ parameterizes $T$-points $(\mathscr{C}, \Sigma_{\bullet}, \cL)$ of $\mathscr{X}_{g,n}$ (resp. $(\mathscr{C}, \Sigma_{\bullet}, \cL, s)$ of $\bigpert_{g,n}$) such that for all $t \in T$ the twisted curve $\mathscr{C}_t$ is a scheme. 
\end{defn}

\begin{prop} \label{prop: open substacks cartesian}
    The inclusions $\mathscr{X}^{\rep}_{g,n} \hookrightarrow \mathscr{X}_{g,n}$ and $\bigpert^{\rep}_{g,n} \hookrightarrow \bigpert_{g,n}$ are open immersions, and they fit into a Cartesian diagram
    \[
\xymatrix{\bigpert^{\rep}_{g,n}\ar[r]\ar[d] & \bigpert_{g,n}\ar[d]\\ \mathscr{X}^{\rep}_{g,n}\ar[r] & \mathscr{X}_{g,n}. }
\]
\end{prop}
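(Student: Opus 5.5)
The statement splits into two claims. The first is that the locus where the fibers of the twisted curve are schemes is open. The second is that the square is Cartesian. The Cartesian claim is essentially formal. By \Cref{defn: non-stacky relatively polarized twisted curves}, whether a $T$-point of $\bigpert_{g,n}$ lies in $\bigpert^{\rep}_{g,n}$ depends only on the underlying twisted curve $\mathscr{C}\to T$. This curve is unchanged by the forgetful morphism $\bigpert_{g,n}\to\mathscr{X}_{g,n}$ of \Cref{p:forget-B-to-X}. Hence $\bigpert^{\rep}_{g,n}$ is exactly the preimage of $\mathscr{X}^{\rep}_{g,n}$, as substacks. Concretely, a $T$-point of the fiber product is a $T$-point of $\bigpert_{g,n}$ whose image lies in $\mathscr{X}^{\rep}_{g,n}$, and this is the same as a $T$-point of $\bigpert^{\rep}_{g,n}$. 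So openness for $\bigpert$ follows from openness for $\mathscr{X}$ by base change, and it suffices to treat $\mathscr{X}^{\rep}_{g,n}\subset\mathscr{X}_{g,n}$.

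Openness reduces further to the stack of twisted curves. The substack $\mathscr{X}^{\rep}_{g,n}$ is the preimage under $\mathscr{X}_{g,n}\to\mathfrak{M}^{tw,ss}_{g,n}$ of the locus $\mathfrak{M}^{ss}_{g,n}\subset\mathfrak{M}^{tw,ss}_{g,n}$ of twisted curves with trivial stabilizers. That locus is the open immersion $\mathfrak{M}_{g,n}\hookrightarrow\mathfrak{M}^{tw}_{g,n}$ recalled before \Cref{remark: duality}, intersected with the semistable locus.

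I would also give an intrinsic argument for openness, in case one wants to avoid relying on that open immersion. The argument mirrors the representability argument in \Cref{p:forget-B-to-X}.
\begin{itemize}
\item The inertia $I_{\mathscr{C}}\to\mathscr{C}$ is finite and unramified over $\mathscr{C}$, since twisted curves are tame DM stacks.
\item The identity section is open and closed in $I_{\mathscr{C}}$, so its complement $I'$ is a closed substack, finite over $\mathscr{C}$.
\item The image of $I'$ in $T$, under the proper map $\mathscr{C}\to T$, is closed.
\item Its complement is exactly the set of $t$ for which $\mathscr{C}_t$ has trivial stabilizers, i.e. is an algebraic space and hence a scheme (its coarse space). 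Here one uses that the formation of inertia commutes with base change to fibers.
\end{itemize}
This exhibits the representable locus as open.

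The only mildly delicate step is the open-and-closedness of the identity in the inertia. It follows because $I_{\mathscr{C}}\to\mathscr{C}$ is unramified and the identity section is a section of an unramified separated morphism. Alternatively, one can use the local charts $\Spec(k[x,y]/(xy))/\mu_r$ and $\Spec(k[z])/\mu_r$ of \Cref{defn:marked-twisted-curve}, where the stabilizer orders $r$ are visibly locally constant along the nodes and markings in families (see \cite[\S5]{olsson-twisted-curve}).
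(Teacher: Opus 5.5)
Your proposal is correct and follows essentially the same route as the paper: both $\mathscr{X}^{\rep}_{g,n}$ and $\bigpert^{\rep}_{g,n}$ are preimages of the representable locus of the stack of twisted curves, which is open by \cite[Cor.~1.12]{olsson-twisted-curve} (equivalently, via the open immersion $\mathfrak{M}_{g,n}\hookrightarrow\mathfrak{M}^{tw}_{g,n}$), and this gives openness and the Cartesian square at once. Your supplementary inertia-based argument is a valid self-contained alternative to citing Olsson, but it is not needed.
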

\begin{proof}
    Consider the forgetful morphisms $\bigpert_{g,n} \to \mathscr{X}_{g,n} \to \mathfrak{M}^{tw}_{g, n}$ to the stack of twisted marked curves. By definition, the substacks $\mathscr{X}^{\rep}_{g,n}$ and $\bigpert^{\rep}_{g,n}$ are the preimages of the substack $\mathfrak{M}^{tw, \rep}_{g, n} \subset  \mathfrak{M}^{tw}_{g, n}$ parameterizing $T$-families $(\mathscr{C}, \Sigma_{\bullet})$ such that for all $t \in T$ the twisted curve $\mathscr{C}_t$ is a scheme. The result follows because $\mathfrak{M}^{tw, \rep}_{g, n} \hookrightarrow  \mathfrak{M}^{tw}_{g, n}$ is known to be an open immersion \cite[Cor. 1.12]{olsson-twisted-curve}.
\end{proof}

\subsection{Codimension Two Filling Conditions}

The goal of this section is to prove $\Theta$-completeness and $S$-completeness of the stacks $\mathscr{X}_{g,n}$ and $\bigpert_{g,n}$. We recall both of these codimension-two filling conditions below. Throughout, we use $R$ to denote a DVR with uniformizer $\pi$ and fraction field $K$.

\begin{defn} \label{defn: S-completeness and Theta reductivity} Let $\cX$ be an algebraic stack over $k$. We consider commutative diagrams of the form
\begin{equation} \label{eq: completeness diagram}
\xymatrix{
    \cY^\circ \ar[r] \ar[d] & \cX  \\
    \cY \ar@{-->}[ur]  
}
\end{equation}
where $\cY$ is an algebraic stack with closed point $\mathfrak{o} \in \cY$, and $\cY^\circ := \cY \setminus \mathfrak{o}$. For two different pairs $(\cY, \mathfrak{o})$, we say that the associated filling condition holds if for every such diagram \eqref{eq: completeness diagram} of solid arrows there exists a unique dotted arrow making the diagram commute.

\begin{enumerate}[(1)]
    \item Define
    $\overline{\rm{ST}}_R: = \Spec(R[s,t]/(st-\pi)) / \bG_m$ where $\bG_m$ acts by giving $s$ weight $1$ and $t$ weight $-1$. Let $\mathfrak{o}$ be the closed point corresponding to the ideal $(s, t)$. We say that $\cX$ is \emph{S-complete} if the filling condition holds for $(\cY = \overline{\rm{ST}}_R, \mathfrak{o})$ for every DVR $R$.
    \item  Define $\Theta_R := \Spec(R[t]) / \bG_m$, where $\bG_m$ acts by giving $t$ weight $-1$. Let $\mathfrak{o}$ be the closed point corresponding to the ideal $(\pi, t)$. We say that $\cX$ is \emph{$\Theta$-complete} if the filling condition holds for $(\cY = \Theta_R, \mathfrak{o})$ for every DVR $R$.
\end{enumerate}
\end{defn}

\begin{remark} \label{remark: affine diagonal reductions}
    If the stack $\cX$ has affine diagonal, then the uniqueness of the filling in \eqref{eq: completeness diagram} is automatic, and to check the existence we may replace $R$ with an arbitrary extension of DVR's $R \subset R'$ \cite[Prop. 3.17, Prop. 3.41]{alper2023existence}. We use this fact freely in our proofs of $\Theta$-completeness and S-completeness.
\end{remark}

\begin{notn} \label{notn: Y over the stack}
    In the setting of \Cref{defn: S-completeness and Theta reductivity}, each of the test stacks $\cY$ arises as a quotient $\cY = Y/\bG_m$, where for $\cY = \overline{\rm{ST}}_R$, we set $Y := \Spec(R[s,t]/(st-\pi))$ and for $\cY = \Theta_R$, we set $Y := \Spec(R[t])$. Abusing notation, we also write $\mathfrak{o} \in Y$ for the closed point lying over $\mathfrak{o} \in \cY$ cut out by the corresponding ideal $(s, t)$ in the S-completeness case and $(\pi, t)$ in the $\Theta$-completeness case. We set $Y^\circ := Y \setminus \mathfrak{o}$, so that $\cY^\circ = Y^\circ/ \bG_m$.
\end{notn}

\subsubsection*{Some reduction results for filling conditions}

\begin{lemma} \label{lemma: complete forgetful}
The forgetful morphism $\bigpert_{g,n} \to \mathscr{X}_{g,n}$ is $\Theta$-complete and $S$-complete. In other words, given any DVR $R$ and letting $\cY$ be as in \Cref{notn: Y over the stack}, for any diagram of solid arrows as below
\begin{equation} \label{eqn: s-complete forgetful}
    \xymatrix{
    \cY^\circ \ar[r] \ar[d] & \bigpert_{g,n} \ar[d]  \\
    \cY \ar@{-->}[ur]  \ar[r] & \mathscr{X}_{g,n}
}
\end{equation}
there exists a unique dotted arrow making the diagram commute. The same holds for the morphism $\bigpert_{g,n}^{\rep} \to \mathscr{X}_{g,n}^{\rep}$.
\end{lemma}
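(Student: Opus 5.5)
The approach is to reduce everything to a Hartogs-type extension of sections across the codimension-two point $\mathfrak{o}$, followed by a check that the open condition (c) is automatic. Uniqueness is immediate. By \Cref{p:forget-B-to-X}, the forgetful morphism $\bigpert_{g,n} \to \mathscr{X}_{g,n}$ is quasiaffine, hence separated and representable. A lift over $\cY$ is determined by its restriction to the dense open $\cY^\circ$: two lifts agree on the schematically dense open $\cY^\circ$, since $Y$ is integral. So only existence needs proof.

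For existence, pull back along $Y \to \cY$. We obtain a $\bG_m$-equivariant family $(\mathscr{C}, \Sigma_\bullet, \cL) \to Y$ in $\mathscr{X}_{g,n}$, together with an equivariant cosection $s^\circ : \cL|_{\mathscr{C}^\circ} \to \cO_{\mathscr{C}^\circ}$ over $\mathscr{C}^\circ := \mathscr{C} \times_Y Y^\circ$, satisfying (c) at points of $Y^\circ$. Equivalently, $s^\circ$ is a section of $\cL^{\vee}$ over $\mathscr{C}^\circ$. The key claim is that it extends uniquely to a section over $\mathscr{C}$.

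To prove the claim, let $f: \mathscr{C} \to Y$ denote the family and view $s^\circ$ as a section of the coherent sheaf $f_*\cL^\vee$ over $Y^\circ$. Then argue depth-theoretically. $Y$ is a regular (or at least normal) two-dimensional Noetherian scheme, $\mathfrak{o}$ has codimension $2$, and $\mathscr{C} \to Y$ is flat with one-dimensional Cohen--Macaulay fibers. Hence $\mathscr{C}$ is $S_2$ and the preimage $\mathscr{C}_{\mathfrak{o}}$ of $\mathfrak{o}$ has codimension $2$ in $\mathscr{C}$. The line bundle $\cL^\vee$ therefore satisfies $\cH^0_{\mathscr{C}_\mathfrak{o}}(\cL^\vee)=\cH^1_{\mathscr{C}_\mathfrak{o}}(\cL^\vee)=0$, and sections over $\mathscr{C}^\circ$ extend uniquely. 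This can be checked étale locally on $\mathscr{C}$, where it reduces to depth $\ge 2$ of a flat algebra over a two-dimensional regular local ring along its maximal ideal. Equivariance of the extension follows from uniqueness, applied on $\bG_m \times \mathscr{C}$. Descending gives the extended cosection $s$ on $\cY$.

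It remains to check condition (c) at the closed point $\mathfrak{o}$, and this is the step I expect to be the main obstacle, since the extended $s$ might a priori vanish identically on a stable component of the central fiber. My first approach is to use that $\bigpert_{g,n} \to \mathfrak{Y}$ is an open immersion. The locus of $Y$ where (c) fails is closed, $\bG_m$-invariant, and contained in $\{\mathfrak{o}\}$. Suppose $s_{\mathfrak{o}}$ vanishes at the generic point $\eta$ of a stable component of $C^{\st}_\mathfrak{o}$. The stabilization $C^{\st} \to Y$ is a flat family of stable curves, so it has normal total space, and $\eta$ is a codimension-$2$ point of it. After choosing $N$ so that $\cL^{N}$ descends, consider the vanishing locus of the section $s^{\otimes N}$ of $L^{\vee}$ near $\eta$. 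The map $C \to C^{\st}$ is an isomorphism near $\eta$, so this is the zero locus of a section of a line bundle on a normal scheme, hence either empty or of pure codimension $1$. Thus it contains a divisor through $\eta$, which must dominate a curve in $Y$ through $\mathfrak{o}$, or else be contained in the codimension-$2$ fiber over $\mathfrak{o}$. Either way we reach a contradiction: in the first case (c) fails at points of $Y^\circ$, and the second is impossible by dimension.

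The same argument applies verbatim to $\bigpert^{\rep}_{g,n} \to \mathscr{X}^{\rep}_{g,n}$, using the Cartesian square of \Cref{prop: open substacks cartesian}.
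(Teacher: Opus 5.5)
Your uniqueness argument and the Hartogs extension of $s^\circ$ across $\mathscr{C}_{\mathfrak{o}}$ are fine and agree with the paper, which gets uniqueness from the affine diagonal of $\bigpert_{g,n}$. The gap is in your verification of condition (c) at $\mathfrak{o}$.

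Your dichotomy for an irreducible component $D_0$ of $V(s^{\otimes N})$ through $\eta$ omits the case that $D_0$ dominates $Y$ itself. Then $D_0 \to Y$ is generically finite and can meet every fiber over $Y^\circ$ in finitely many points, which is perfectly compatible with (c) on $Y^\circ$. Upper semicontinuity of fiber dimension still allows the fiber over the codimension-two point $\mathfrak{o}$ to be a whole curve. This really happens for non-equivariant sections. Near $\eta$ the total space is $\overline{C}^{\st}\times_{\Spec R} Y$; let $w$ be a local fiber coordinate and $u,v$ the parameters cutting out $\mathfrak{o}$ as in \Cref{notn: semistable-curves-over-Y}. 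The function $uw-v$ vanishes at no more than one point of each fiber over $Y^\circ$, but vanishes identically along the fiber over $\mathfrak{o}$. So no argument that only uses that $Y$ is a regular surface and $\mathfrak{o}$ has codimension two can establish (c). You note that the failure locus is $\bG_m$-invariant, but the equivariance is never actually used.

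The missing input is the $\bG_m$-equivariance, together with the fact that the stabilization is pulled back from $\Spec R$. For instance, let $\eta_0 \in \overline{C}^{\st}$ be the image of $\eta$ and $A := \cO_{\overline{C}^{\st},\eta_0}$.
\begin{itemize}
\item The stack $\Spec(A)\times_{\Spec R}\cY$ has good moduli space $\Spec(A)$, so $\cL$ is trivial there up to a character.
\item Hence $s$ becomes a semi-invariant function: $v^k f$, or (for $\overline{ST}_R$) $u^k f$, with $f \in A$.
\item Condition (c) at points of $Y_v\setminus\mathfrak{o}$, respectively $Y_u\setminus\mathfrak{o}$, forces $k=0$.
\item Condition (c) at a point of $Y^\circ$ over the closed point of $\Spec R$ then forces $f$ to be a unit.
\end{itemize}

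The paper carries out the global version of this. It constructs a fiberwise dense open $U \subset \overline{C}^{\st}$ such that $s^\circ$ is an isomorphism on $U_{\cY^\circ}$, and applies Hartogs to the inverse. Density of $U$ is proved separately in the two cases:
\begin{itemize}
\item when $\cY=\overline{ST}_R$, it uses that $\cY^\circ \to \Spec R$ is quasi-finite and universally closed;
\item when $\cY=\Theta_R$, it uses that over $\Theta_K$ (with $K$ the fraction field of $R$) the cosection descends along the good moduli space $V_{\Theta_K}\to V_K$.
\end{itemize}
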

\begin{proof}
    In view of the Cartesian diagram in \Cref{prop: open substacks cartesian}, it suffices to prove the result for the morphism $\bigpert_{g,n} \to \mathscr{X}_{g,n}$. The uniqueness of the dotted arrow in \eqref{eqn: s-complete forgetful} follows from the fact that $\bigpert_{g,n}$ has affine diagonal (see \Cref{remark: affine diagonal reductions}). We just need to show the existence. The morphism $\cY^{\circ} \to \bigpert_{g,n}$ corresponds to a family $(\mathscr{C}^{\circ}, \Sigma_{\bullet}^{\circ}, \cL^{\circ}, s^{\circ})$ over $\cY^{\circ}$. By assumption, the triple $(\mathscr{C}^{\circ}, \Sigma_{\bullet}^{\circ}, \cL^{\circ})$ extends to a family $(\mathscr{C}, \Sigma_{\bullet}, \cL)$ over $\cY$. Furthermore, by Hartogs's theorem, the cosection $s^{\circ}: \cL^{\circ} \to \cO_{\mathscr{C}^{\circ}}$ extends uniquely to $s: \cL \to \cO_{\mathscr{C}}$. To conclude, we need to show that the tuple $(\mathscr{C}, \Sigma_{\bullet}, \cL, s)$ satisfies condition (c) in \Cref{defn: big stack of perturbed curves}. Consider the stabilization $(C^{\st}, \sigma_{\bullet}^{\st}) \to \cY$ of the marked twisted curve $(\mathscr{C}, \Sigma_{\bullet})$. Since $\overline{\cM}_{g,n}$ is separated and has finite inertia, the morphism $(C^{\st}, \sigma_{\bullet}^{\st}) : \cY \to \overline{\cM}_{g,n}$ factors through $\cY\to \Spec(R)$ \cite[Lem. 3.6, Lem. 3.45]{alper2023existence}. In other words, there is a family of marked stable curves $(\overline{C}^{\st}, \overline{\sigma}_{\bullet}^{\st})$ over $\Spec(R)$ such that $(C^{\st}, \sigma_{\bullet}^{\st})$ is isomorphic to the base-change $(\overline{C}^{\st}_{\cY}, (\overline{\sigma}_{\bullet}^{\st})_{\cY})$. To conclude we show that the following holds.
    
    \medskip 
    
    \noindent \textbf{Claim.} There is an open subscheme $U \subset \overline{C}^{\st}$ which is fiberwise dense over $\Spec(R)$ and satisfies the following:
    \begin{enumerate}[(a)]
        \item The morphism $\mathscr{C} \to C^{\st} \cong \overline{C}^{\st}_{\cY}$ is an isomorphism over the open $U_{\cY} \subset \overline{C}^{\st}_{\cY}$.
        \item The restriction of $s^{\circ}: \cL^{\circ} \to \cO_{\mathscr{C}_{\cY^{\circ}}}$ to $U_{\cY^{\circ}} \subset \mathscr{C}_{\cY^{\circ}}$ is an isomorphism. 
    \end{enumerate}
    
    If the claim holds, then Hartogs's theorem guarantees that $s$ is an isomorphism when restricted to $U_{\cY}$, which is fiberwise dense inside $C^{\st} \cong \overline{C}^{\st}_{\cY}$, thus concluding the proof.

    We are left to show the claim. Set $V \subset \overline{C}^{\st}$ to be the complement $\overline{C}^{\st, sm} \setminus \bigcup_{i=1}^n \overline{\sigma}^{\st}_i$ of the markings inside the smooth locus $\overline{C}^{\st,sm} \subset \overline{C}^{\st}$ of $\overline{C}^{\st} \to \Spec(R)$. Note that the morphism $\mathscr{C} \to \overline{C}^{\st}_{\cY}$ is an isomorphism when restricted to $V_{\cY}$. Let $s^{\circ}_{V}$ denote the restriction of $s^{\circ}: \cL^{\circ} \to \cO_{\mathscr{C}^{\circ}}$ to $V_{\cY^{\circ}} \subset \mathscr{C}^{\circ}$. Let $W \subset V_{\cY^{\circ}}$ denote the open subset where $s^{\circ}_V$ is an isomorphism. Since $s^{\circ}$ is the cosection corresponding to the original morphism $\cY^{\circ} \to \bigpert_{g,n}$, the open subset $W \subset \overline{C}^{\st}_{\cY^{\circ}} \cong C^{\st}_{\cY^{\circ}}$ has dense $\cY^{\circ}$-fibers. Let $H$ be the reduced closed complement of $W \subset \overline{C}^{\st}_{\cY^{\circ}}$, denote by $\pi: \cY^{\circ} \to \Spec(R)$ the structure morphism, and let $Z \subset \overline{C}^{\st}$ be the schematic closure of $H$ under the base-change $\pi_{\overline{C}^{\st}}: \overline{C}^{\st}_{\cY^{\circ}} \to \overline{C}^{\st}$. We set $U$ to be the open complement of $Z \subset \overline{C}^{\st}$. By construction $U$ satisfies conditions (a) and (b) of the claim; we are only left to prove that the $\Spec(R)$-fibers of $U \subset \overline{C}^{\st}$ are dense. Equivalently, we need to show that $Z \to \Spec(R)$ has finite fibers. To do this, we distinguish the cases of $\cY = \Theta_R$ and $\cY = \overline{ST}_R$.

    If $\cY = \overline{ST}_R$, then the scheme $\cY^{\circ}$ is the union of two copies of $\Spec(R)$ glued at their generic points. In particular, the morphism $\cY^{\circ} \to \Spec(R)$ is quasifinite and universally closed. It follows that, set-theoretically, $Z \subset \overline{C}^{\st}$ is the naive image of $H$ under $\pi_{\overline{C}^{\st}}$. Since the fibers of $H \to \cY^{\circ}$ are finite, so are the fibers of $H \to \cY^{\circ} \to \Spec(R)$, and hence the same holds for the fibers of the image $Z \subset \overline{C}^{\st}$.

    Suppose now that $\cY = \Theta_R$. Let $\eta$ denote the generic point of $\Spec(R)$. Note that $\cY^{\circ}$ is obtained by gluing $\cY^{\circ}_{t \neq 0} = \Spec(R)$ and $\Theta_{\eta}$ along their open points. Since the morphism $\pi_{\overline{C}^{\st}}$ restricts to an isomorphism over the open $\cY^{\circ}_{t \neq 0}$, it follows that the schematic image of $H|_{\cY^{\circ}_{t \neq 0}}$ has finite fibers. It is enough to show that the schematic image of $H|_{\Theta_{\eta}}$ under the morphism $ \pi_{\overline{C}^{\st}}: \overline{C}^{\st}_{\Theta_{\eta}} \to \overline{C}^{\st}_{\eta}  \hookrightarrow \overline{C}^{\st}$ is finite over $\Spec(R)$. This coincides with the closure of the (schematic) image of $H_{\Theta_{\eta}}$ under the closed projection morphism $\overline{C}^{\st}_{\Theta_{\eta}} \to \overline{C}^{\st}_{\eta}$; it suffices to show that the latter one is finite over $\eta$. For this we note that, over the open $V_{\Theta_{\eta}} \subset \overline{C}^{\st}_{\Theta_{\eta}}$, we have by assumption that the line bundle $\cL^{\circ}|_{V_{\Theta_{\eta}}}$ is isomorphic to $\cO_{V_{\Theta_{\eta}}}$ over a $\Theta_{\eta}$-fiberwise dense open. In particular, the stabilizers of the stack $V_{\Theta_{\eta}}$ act trivially on the fibers of the line bundle $\cL^{\circ}{|_{V_{\Theta_{\eta}}}}$, and therefore $\cL^{\circ}{|_{V_{\Theta_{\eta}}}}$ is isomorphic to the pullback of a line bundle $L$ on $V_{\eta}$ under the good moduli space morphism $V_{\Theta_{\eta}} \to V_{\eta}$ \cite[Thm. 10.3]{alper_good_moduli}. By the projection formula applied to $V_{\Theta_{\eta}} \to V_{\eta}$, it follows that the restriction of the cosection $s^{\circ}|_{V_{\Theta_{\eta}}}: \cL^{\circ}|_{V_{\Theta_{\eta}}} \to \cO_{V_{\Theta_{\eta}}}$ is in fact the pullback of a cosection $\overline{s}: L \to \cO_{V_{\eta}}$. Let $\overline{H} \subset V_{\eta}$ denote the finite $\eta$-scheme where $\overline{s}$ vanishes. Then $H_{\Theta_{\eta}}$ is the preimage under $\overline{C}^{\st}_{\Theta_{\eta}} \to \overline{C}^{\st}_{\eta}$ of the union $\overline{H} \cup (\overline{C}^{\st}_{\eta} \setminus V_{\eta})$ of finite $\eta$-schemes. Therefore the image of $H_{\Theta_{\eta}}$ is the finite $\eta$-scheme $\overline{H} \cup (\overline{C}^{\st}_{\eta} \setminus V_{\eta})$.
\end{proof}

\begin{lemma}\label{lemma_extending_stacky_curve}
    Let $R$ be a DVR, and let $\cY$ be either $\overline{ST}_R$ or $\Theta_R$ as in \Cref{notn: Y over the stack}. Let $\cY^{\circ} \to \mathscr{X}_{g,n}$ be a morphism corresponding to a tuple $(\mathscr{C}^{\circ}, \Sigma_{\bullet}^{\circ}, \cL^{\circ})$, and let $(C^{\circ}, \sigma_{\bullet}^{\circ})$ denote the relative coarse space of $\mathscr{C}^\circ$. Let $N>0$ be an integer such that $(\cL^\circ)^{\otimes N}$ descends to a line bundle $L^{\circ}$ on $C^{\circ}$, and consider the corresponding morphism $(C^{\circ}, \sigma_{\bullet}^{\circ}, L^{\circ}): \cY^{\circ} \to \mathscr{X}_{g,n}^{\mathrm{rep}}$. If there is an extension to a morphism $(C, \sigma_{\bullet}, L): \cY \to \mathscr{X}_{g,n}^{\mathrm{rep}}$, then there exists an extension of the original family $(\mathscr{C}^{\circ}, \Sigma_{\bullet}^{\circ}, \cL^{\circ})$ to a morphism $\cY \to \mathscr{X}_{g,n}$.
\end{lemma}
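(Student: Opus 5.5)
The plan is to rebuild the twisted curve from the coarse extension $(C,\sigma_\bullet,L)$ over $\cY$ by taking a root stack, and then to extend the twisted curve itself by a normalization/Hartogs-type argument. First we take the relevant stacky structure on $C$: over $\cY^\circ$, the twisted curve $\mathscr{C}^\circ$ together with $\cL^\circ$ gives a representable morphism $\mathscr{C}^\circ \to \sZ^\circ$, where $\sZ^\circ := C^\circ \times_{L^\circ, B\bG_m, (-)^N} B\bG_m$ is the stack of $N$-th roots of $L^\circ$ on $C^\circ$. Condition (a) (representability of $\mathscr{C}^\circ \to B\bG_m$) implies that $\mathscr{C}^\circ \to \sZ^\circ$ is representable. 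It is also an isomorphism over the locus $\mathcal{U}$ of \Cref{defn:marked-twisted-curve}(3), after identifying that locus with an open of the root stack where $L^\circ$ has a (non-unique) root. In fact, $\mathscr{C}^\circ$ is a twisted stable map to $\sZ^\circ$ relative to $\cY^\circ$ whose coarse map is the identity of $C^\circ$. Over $\cY$ we form $\sZ := \sqrt[N]{L/C}$, which is relatively DM and locally projective over $\cY$, exactly as in the proof of \Cref{prop: big stack of perturbed curves is algebraic}.

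Second, we extend the twisted stable map. The map $\cY^\circ \to \cK_{g,n}(\sZ/\cY)$ lands in the locally closed substack $\cK'$ of twisted stable maps whose coarse map is an isomorphism onto $C$ compatible with markings. The key point is that the forgetful map from this locus to the stack of data on $C$, i.e. to $\cY$ itself, is finite and unramified in a controlled way. Indeed, by Olsson's local description \cite[\S5]{olsson-twisted-curve}, twisted curves with a fixed coarse space are étale locally parameterized by choosing, at each node $xy=f$, an integer $r$ and an $r$-th root of $f$, or more precisely a log structure. Similarly, the representable morphism to $\sZ$ is determined by the root of $L$. So $\cK' \to \cY$ is quasi-finite, proper (the proper stack of twisted stable maps) and has finite inertia. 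A section over $\cY^\circ$ of a finite morphism to a regular 2-dimensional base then extends over the codimension-two point $\mathfrak{o}$. The concrete way to do this is to take the normalization of $\cY$ in $\cY^\circ$ for the finite cover, and use that a finite birational map onto the normal stack $\cY$ is an isomorphism. Equivalently, we extend the node-smoothing parameters: at each node of $C$ over $\cY$, the smoothing parameter $f \in \cO(Y)$ admits an $r$-th root on $Y^\circ$ as part of the data of $\mathscr{C}^\circ$, hence on $Y$ by Hartogs (since $Y$ is normal), $\bG_m$-equivariantly.

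Third, having extended $\mathscr{C}$ with a morphism to $\sZ$, we obtain $\cL$ as the pullback of the universal $N$-th root. We then check conditions (a) and (b). Condition (a) holds because representability is open and can be checked at the fiber over $\mathfrak{o}$, where it follows from the construction as a representable map to $\sZ$ composed with $\sZ \to B\bG_m$; the stabilizers of $\sZ$ act faithfully on the root. Condition (b) holds because $\cL^{\otimes N} \cong L$ is $\pi$-ample, since $(C,\sigma_\bullet,L) \in \mathscr{X}^{\rep}_{g,n}$. The markings extend because gerbes over markings are handled by the same root argument.

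The main obstacle is the second step: showing that the twisting data at nodes and markings actually extends across $\mathfrak{o}$. The danger is that the family $C$ over $\cY$ could have nodes at $\mathfrak{o}$ not present nearby, but the nodes of $\mathscr{C}^\circ$ come from those of $C^\circ$. The first thing we would try is Olsson's description of twisted curves as log twisted data. At a node of $C$ with local equation $xy = f$, $f \in \cO_{Y,\mathfrak{o}}$, the twisted structure of order $r$ over $Y^\circ$ amounts to a factorization $f = u\, g^r$ compatible with log structures. By normality of $Y$ and the fact that $\cO_{Y,\mathfrak{o}}$ is a UFD here ($R[t]$ localized, or $R[s,t]/(st-\pi)$ localized, both having $\bG_m$-graded structure with monomial divisors), this factorization extends. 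For nodes lying only over $\mathfrak{o}$, we would take $r=1$.
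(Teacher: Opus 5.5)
Your architecture matches the paper's. You root $L$; the paper roots the twist $M=L(\sum_i b_i\sigma_i)$, after first removing the stackiness along the markings so it can work with unmarked twisted stable maps, and keeping gerbe markings as you do is a cosmetic difference. You then view $(\mathscr{C}^\circ,\cL^\circ)$ as a $\cY^\circ$-point of the locus $\cK'$ of relative twisted stable maps whose coarse map is an isomorphism onto $C$, note that $\cK'\to\cY$ is proper and quasi-finite, and extend the section.

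The gap is in that last step, which is the whole content of the lemma. The morphism $\cK'\to\cY$ is not representable: its relative inertia always contains the $\mu_N$ that rescales the root. So the normalization $\widetilde{\cY}$ of $\cK'$ in $\cY^\circ$ is proper, quasi-finite and an isomorphism over $\cY^\circ$, but nothing you say makes it representable over $\mathfrak{o}$. The principle ``a finite birational map onto a normal base is an isomorphism'' only applies to representable maps. Normality really is insufficient here. The coarse space map $[\bA^2/\mu_2]\to\bA^2/\mu_2$ is proper, quasi-finite, relatively DM, an isomorphism off the vertex, and has normal target, yet its tautological section off the vertex does not extend. What makes the filling exist in your setting is regularity of $Y$. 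Hartogs extends the section of the relative coarse space, which is finite over $\cY$, and Zariski--Nagata purity is then needed to lift it back through the stacky part. The paper packages exactly this by citing \cite[Prop.~3.23, Prop.~3.46]{alper2023existence} for the proper, quasi-finite, relatively DM morphism $\cK^{(1)}_g(\mathscr{R}/\cY)\to\cY$. You need that citation or the purity argument.

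Your ``equivalent'' argument, extending an $r$-th root of the smoothing parameter by Hartogs, is not equivalent and does not close the gap, because it presupposes a single index $r$ at each node over $Y^\circ$. Since the stabilization is pulled back from $\Spec(R)$, a node of $C_{\mathfrak{o}}$ has local equation $xy=u^av^b\cdot(\text{unit})$. When $a,b>0$, its nodal locus over $Y^\circ$ has two disjoint pieces, lying over $Y_u\setminus\mathfrak{o}$ and $Y_v\setminus\mathfrak{o}$. The twisted curve $\mathscr{C}^\circ$ by itself could carry unrelated indices $r_u\mid a$ and $r_v\mid b$ on these pieces. In that case no factorization $f=(\text{unit})\cdot g^r$ exists on $Y^\circ$, and no twisted curve over $Y$ has coarse space $C$.

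The indices agree only because $\cL^\circ$, restricted to the schematic locus, is an $N$-th root of $L$, which is a line bundle across $\mathfrak{o}$. It therefore defines a torsion class in the local class group of $C$ at the node, and $r_u$ and $r_v$ are both the order of this class. For the model $xy=u^av^b$, this torsion is cyclic of order $\gcd(a,b)$ and injects into the class groups at the generic points of both branches. So the hypothesis on $L$ has to be used at exactly this point. Moreover, the line bundle, not just the curve, has to be extended, including across the stacky node at $\mathfrak{o}$. The twisted-stable-maps formulation does both automatically; the Hartogs-on-$f$ argument does neither. Separately, there are no nodes ``lying only over $\mathfrak{o}$'': $V(f)$ is a divisor through $\mathfrak{o}$, so every node of $C_{\mathfrak{o}}$ persists over points of $Y^\circ$.
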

\begin{proof}
    Let us denote by $\mathscr{C}^{\circ} \to \overline{\mathscr{C}}^{\circ} \to \cY^{\circ}$ the partial relative coarse space where we only remove the fiberwise stackyness along the markings, so that $\mathscr{C}^{\circ} \to \overline{\mathscr{C}}^{\circ}$ is an isomorphism away from $\bigcup _i\Sigma^{\circ}_i$. There exists some twist $\cL^{\circ}(\sum_i a_i \Sigma_i^{\circ})$ that descends to a line bundle $\cM^{\circ}$ on $\overline{\mathscr{C}}^{\circ}$. Note that $(\cL^{\circ}(\sum_i a_i \Sigma_i^{\circ}))^{\otimes N}$ (or, equivalently, $(\cM^{\circ})^{\otimes N}$) descends to a twist $M^{\circ} := L^{\circ}(\sum_i b_i \sigma_i^{\circ})$ of $L^{\circ}$.
    
    Set $M:= L(\sum_i b_i \sigma_i)$, and let $\mathscr{R} \to C$ denote the $N^{th}$-root stack of the line bundle $M$. The morphism $\mathscr{R} \to \cY$ is proper and relatively DM with relative coarse space $C$, which is projective smooth-locally over $\cY$. Let $\cK_g(\mathscr{R}/\cY) \to \cY$ denote relative stack of twisted stable maps, which sends a scheme $T \to \cY$ to the groupoid of morphisms $f: \mathscr{S} \to \mathscr{R}_T$, where $\mathscr{S} \to T$ is a $T$-family of twisted curves of genus $g$ and $f$ is a respresentable morphism of $T$-stacks that induces a $T$-family of stable maps $\overline{f}: S \to C_T$ at the level of coarse spaces. Note that the stack $\cK_g(\mathscr{R}/\cY)$ is algebraic, relative DM and locally of finite type over $\cY$, and the assignment
    \[ \Psi: \cK_g(\mathscr{R}/\cY) \to \cK_g(C/\cY), \; \; \; [f: \mathscr{S} \to \mathscr{R}_T] \mapsto [\overline{f}: S \to C_T]\]
    is proper and quasifinite. Indeed, these properties can be checked smooth-locally over $\cY$, and when we replace $\cY$ with an atlas they follow from \cite[Thm. 1.4.1]{abramovich_vistoli_compactifying_stable_maps}.
    
    We denote by $\cK_g^{(1)}(\mathscr{R}/\cY) \subset \cK_g(\mathscr{R}/\cY)$ the open and closed substack of points $f: \mathscr{S} \to \mathscr{R}_T$ such that for all $t \in T$, the induced morphism $\overline{f}_t: S_t \to C_t$ of prestable curves of genus $g$ is an isomorphism over a dense open subset of $C_t$ \cite[Lem. 3.2]{nodal_ns_moduli}. Note that this actually forces $\overline{f}: S \to C$ to be an isomorphism. Hence the assignment $\Psi$ above induces a proper and quasifinite morphism
    \[ \cK_g^{(1)}(\mathscr{R}/\cY) \to \cY, \; \; \; [f: \mathscr{S} \to \mathscr{R}_T] \mapsto [T \to \cY].\]
    By construction the stack $\cK_g(\mathscr{R}/\cY)$ sends a scheme $T$ to the groupoid of pairs $(\mathscr{S}, \cM)$ where 
    \begin{itemize}
        \item $\mathscr{S} \to T$ is $T$-family of genus $g$ twisted curves over with coarse space $C_T$,
        \item $\cM$ is a line bundle on $\mathscr{S}$ such that the induced morphism $\mathscr{S} \to B\mathbb{G}_m$ is representable.
        \item $\cM^{\otimes N}$ descends to the line bundle $M_T$ on the coarse space $C_T$. 
    \end{itemize}
     We may view the pair $(\overline{\mathscr{C}}^{\circ}, \cM^{\circ})$ as a morphism $\cY^{\circ} \to \cK_g^{(1)}(\mathscr{R}/\cY)$ fitting into the following commutative diagram of solid arrows
    \begin{equation} \label{eqn: relative stable maps}
    \xymatrix{
    \cY^\circ \ar[r] \ar[d] & \cK^{(1)}_g(\mathscr{R}/\cY) \ar[d]  \\
    \cY \ar@{-->}[ur]  \ar[r] & \cY.
}
\end{equation}
Since the morphism $\cK^{(1)}_g(\mathscr{R}/\cY) \to \cY$ is proper and relatively DM, it follows that there exists a dotted arrow as in \eqref{eqn: relative stable maps} making the diagram commute \cite[Prop. 3.23, Prop. 3.46]{alper2023existence}. Therefore, we get an extension $(\overline{\mathscr{C}}, \cM)$ of the pair $(\overline{\mathscr{C}}^{\circ}, \cM^{\circ})$. By construction, the relative coarse space morphism $\overline{\mathscr{C}} \to C \to \cY$ is an isomorphism over the markings $\sigma_{\bullet}$. By taking appropriate root stacks of $\overline{\mathscr{C}}$ along the markings $\sigma_{\bullet}$, we obtain a family $(\mathscr{C}, \Sigma_{\bullet})$ of marked twisted semistable curves over $\cY$ extending $(\mathscr{C}^{\circ}, \Sigma_{\bullet})$. Setting $\cL := \cM|_{\mathscr{C}}(-\sum_i a_i \Sigma_i)$, we obtain the desired family $(\mathscr{C}, \Sigma_{\bullet}, \cL): \cY \to \mathscr{X}_{g,n}$ extending $(\mathscr{C}^{\circ}, \Sigma_{\bullet}^{\circ}, \cL^{\circ})$.
\end{proof}

\begin{lemma}\label{lem: no-Gm-action}
     Let $\cX$ be an algebraic stack with affine diagonal, and consider a morphism $\cY^\circ\to \cX$ for a test stack $\cY^{\circ} \subset \cY$ as in \Cref{defn: S-completeness and Theta reductivity} (1) or (2). Let $Y\to \cY$ (resp. $Y^\circ\to \cY^\circ$) be the quotient by $\bG_m$ as in \Cref{notn: Y over the stack}. If the morphism $Y^\circ\to \cY^\circ\to \cX$ extends to $Y\to \cX$, then $\cY^\circ\to \cX$ extends to $\cY\to \cX$. 
\end{lemma}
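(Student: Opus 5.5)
The plan is to descend the extension $\phi\colon Y \to \cX$ along the $\bG_m$-torsor $Y \to \cY$. This amounts to upgrading $\phi$ to a $\bG_m$-equivariant morphism compatible with the given equivariant structure on $\phi^\circ := \phi|_{Y^\circ}$. The equivariant structure on $\phi^\circ$ (coming from the factorization through $\cY^\circ$) consists of a 2-isomorphism
\[
\alpha^\circ \colon \phi^\circ \circ \mathrm{act} \Rightarrow \phi^\circ \circ \mathrm{pr}_2
\]
of morphisms $\bG_m \times Y^\circ \to \cX$, satisfying the cocycle condition on $\bG_m \times \bG_m \times Y^\circ$. We need to extend $\alpha^\circ$ to a 2-isomorphism $\alpha$ over $\bG_m \times Y$ and check that the cocycle condition persists.

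For the extension, view the two compositions $\phi \circ \mathrm{act}$ and $\phi \circ \mathrm{pr}_2$ together as a morphism $\bG_m \times Y \to \cX \times \cX$. The 2-isomorphisms between them are the sections of
\[
I := (\bG_m \times Y) \times_{\cX \times \cX} \cX \longrightarrow \bG_m \times Y.
\]
Because $\cX$ has affine diagonal, $I$ is affine over $\bG_m \times Y$, and $\alpha^\circ$ is a section of $I$ over the open subscheme $\bG_m \times Y^\circ$. The key point is that $Y$ is a regular (hence normal) Noetherian two-dimensional scheme and $\mathfrak{o}$ has codimension $2$. So $\bG_m \times Y$ is normal, and the complement of $\bG_m \times Y^\circ$ is $\bG_m \times \mathfrak{o}$, of codimension $2$. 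For $\overline{ST}_R$ regularity holds because $st - \pi$ is a regular parameter; for $\Theta_R = \Spec R[t]$ it is clear. Writing $I = \Spec_{\bG_m \times Y} \cA$, the section $\alpha^\circ$ is a ring map $\cA \to j_*\cO_{\bG_m \times Y^\circ}$, where $j\colon \bG_m \times Y^\circ \hookrightarrow \bG_m \times Y$ is the inclusion. By Hartogs, $j_*\cO = \cO_{\bG_m \times Y}$, so the section extends uniquely to a section $\alpha$ of $I$.

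The same Hartogs argument on $\bG_m \times \bG_m \times Y$ and on $Y$ finishes the descent:
\begin{itemize}
\item The two 2-isomorphisms appearing in the cocycle condition are sections of an affine scheme over a normal scheme. They agree on the schematically dense open $\bG_m^2 \times Y^\circ$, and since $I$ is separated they agree everywhere.
\item The unit condition (that $\alpha$ restricted to $\{1\} \times Y$ is the identity) follows in the same way.
\end{itemize}
Thus $(\phi, \alpha)$ is a $\bG_m$-equivariant morphism, and by descent along the torsor $Y \to \cY$ it defines a morphism $\cY \to \cX$. This morphism restricts to the given one on $\cY^\circ$, because by construction its equivariant structure restricts to $\alpha^\circ$.

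The main obstacle is making the 2-categorical descent precise. The cleanest route is to note that morphisms $\cY \to \cX$ are the same as $\bG_m$-equivariant morphisms $Y \to \cX$, i.e.\ maps from the simplicial object $\bG_m^\bullet \times Y$ truncated at level $2$. Then only the reduction to a section of an affine morphism over a normal base needs care, and that is exactly where the affine-diagonal hypothesis enters. If one prefers to avoid normality, one can instead use $\cO_Y = j_*\cO_{Y^\circ}$ directly: this is the depth-$2$ condition, $S_2$ at $\mathfrak{o}$, and it remains valid after the flat base change to $\bG_m^k \times Y$.
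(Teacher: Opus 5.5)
Your proposal is correct and follows the paper's proof. Both extend the descent isomorphism from $\mathbb{G}_m \times Y^\circ$ to $\mathbb{G}_m \times Y$ using the affine diagonal, obtain the cocycle condition from uniqueness of that extension, and then descend along $Y \to \mathcal{Y}$; the only difference is that you prove the extension step directly, via Hartogs applied to the affine Isom scheme over the normal base $\mathbb{G}_m \times Y$, whereas the paper cites \cite[Lemma 4.3]{di2022degenerations} for it.
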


\begin{proof}
    Consider the presentation of $\cY$ given by 
    \[
   \bG_m\times \bG_m\times Y \triple\rightarrow \bG_m\times Y \rightrightarrows Y,
    \]
    and the corresponding presentation for $\cY^\circ$.
    By assumption, we have an object $x\in \cX(Y)$ which extends the object $x^\circ\in \cX(Y^\circ)$. From the presentation of $\cY^\circ$, there is an isomorphism $\xi^\circ$ between the images of $x^\circ$ via the two maps $\cX(Y^\circ)\rightrightarrows \cX(Y^\circ\times \bG_m)$, which we denote by $x_1^\circ$ and $x_2^\circ$. These two images can also be obtained as the image of $x$ via the following composition
    \[
    \cX(Y)\rightrightarrows \cX(Y\times \bG_m)\to \cX(Y^\circ\times \bG_m).
    \]
    Denote by $x_i$ the image of $x$ via $\cX(Y)\rightrightarrows \cX(Y\times \bG_m)$, which restricts to $x_i^\circ$. 
    From \cite[Lemma 4.3]{di2022degenerations}, the isomorphism $\xi^\circ\colon x_1^\circ\to x_2^\circ$ extends uniquely to an isomorphism $\xi\colon x_1\to x_2$, as $\cX$ has affine diagonal. The cocycle condition is again satisfied, from the uniqueness of the filling. Then there is a commutative diagram inducing $\cY\to \cX$ as follows 
    \[
    \cX(Y) \rightrightarrows\cX(\bG_m\times Y)  \triple\rightarrow \cX(\bG_m\times\bG_m\times Y).\qedhere\]
\end{proof}

\subsubsection*{Main result of the section}

\begin{thm}\label{T:s-complete-and-theta-red}
    The stacks $\mathscr{X}_{g,n}$ and $\bigpert_{g,n}$ are $\Theta$-complete and S-complete. The same holds for the open substacks $\mathscr{X}_{g,n}^{\rep}$ and $\bigpert_{g,n}^{\rep}$.
\end{thm}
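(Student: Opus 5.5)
The plan is to reduce everything to the representable stack $\mathscr{X}^{\rep}_{g,n}$ and then check the filling by an explicit construction over the regular surface $Y$. By \Cref{lemma: complete forgetful}, the morphisms $\bigpert_{g,n}\to\mathscr{X}_{g,n}$ and $\bigpert^{\rep}_{g,n}\to\mathscr{X}^{\rep}_{g,n}$ are $\Theta$- and $S$-complete. Composing fillings, it therefore suffices to treat $\mathscr{X}_{g,n}$ and $\mathscr{X}^{\rep}_{g,n}$. Uniqueness is automatic from the affine diagonal (\Cref{p:forget-B-to-X}, \Cref{remark: affine diagonal reductions}), so only existence is needed. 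By \Cref{lemma_extending_stacky_curve}, extending a family in $\mathscr{X}_{g,n}$ reduces to extending the induced coarse family $(C^\circ,\sigma^\circ_\bullet,L^\circ)$ in $\mathscr{X}^{\rep}_{g,n}$. Finally, by \Cref{lem: no-Gm-action}, it is enough to extend the pulled-back family from $Y^\circ$ to $Y$, forgetting the $\bG_m$-equivariance.

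So fix a family $(C^\circ,\sigma^\circ_\bullet,L^\circ)$ of semistable curves over $Y^\circ$ with $L^\circ$ relatively ample over the stabilization. As in the proof of \Cref{lemma: complete forgetful}, the stabilization extends, and in fact comes from a stable curve $\overline{C}^{\st}$ over $\Spec(R)$, because $\overline{\cM}_{g,n}$ is separated with finite inertia. Thus $C^\circ$ is a projective birational modification $\pi^\circ: C^\circ\to \overline{C}^{\st}_{Y^\circ}$ that inserts chains of rational curves over nodes and markings. To obtain a candidate extension I would take
\[
C:=\underline{\Proj}_{\overline{C}^{\st}_Y}\bigoplus_{m\ge0} j_*\pi^\circ_*\bigl((L^\circ)^{\otimes m}\bigr),
\]
where $j:Y^\circ\to Y$ is the inclusion and $L^\circ$ is first twisted by a large power of the pulled-back ample bundle from $\overline{C}^{\st}$, so that it becomes ample. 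Equivalently, $C$ is the relative Proj of the Hartogs extension of the section ring. The line bundle $L$ is then $\cO(1)$ untwisted.

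The main obstacle is showing that this $C$ is flat over $Y$ with nodal semistable fibre over $\mathfrak{o}$, and that the section algebra is finitely generated with pushforwards commuting with base change. This is the step where one needs regularity of $Y$ and an étale-local analysis. Away from the nodes of the central fibre of $\overline{C}^{\st}$, the curve $C^\circ$ agrees with $\overline{C}^{\st}$ except along markings; markings get rational tails, which I would handle by a similar local computation. Near a node, $\overline{C}^{\st}_Y$ is étale locally $\Spec \cO_Y[x,y]/(xy-f)$ with $f\in\cO_Y$. The modification $C^\circ$ is then a toric-type resolution determined by a chain of exceptional curves, and the data of $L^\circ$ amounts to a piecewise-linear (convex) function on the fan. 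The $\bG_m$-equivariance on $Y$, where $f$ is a monomial in $s,t$ or $\pi,t$, makes the whole situation toric over $\Spec R$. The Hartogs-extended ring is then the ring of a toric variety whose fan is the closure of the fan over $Y^\circ$. One checks directly that it is a flat family of chains of $\bP^1$'s over $\mathfrak{o}$ and that $L$ is relatively ample. If this explicit toric computation becomes unwieldy, the fallback is to verify only that $C$ has $S_2$ fibres of the correct Euler characteristic and deduce flatness via \cite[Lemma 4.3]{di2022degenerations}-style depth arguments.

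Once $C$ and $L$ are constructed over $Y$, the remaining checks are routine. Conditions (a) and (b) of \Cref{D:rigid_polarized_curves} are open and hold at $\mathfrak{o}$ by construction. The extension then descends to $\cY$ by \Cref{lem: no-Gm-action} and lifts to the stacky and anchored cases by the lemmas above.
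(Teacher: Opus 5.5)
Your reductions coincide with the paper's: pass to $\mathscr{X}^{\rep}_{g,n}$ via \Cref{lemma: complete forgetful} and \Cref{lemma_extending_stacky_curve}, then reduce to an extension over $Y$ via \Cref{lem: no-Gm-action}. Your candidate $C=\underline{\Proj}_{\overline{C}^{\st}_Y}\bigoplus_m j_*\pi^\circ_*(L^\circ)^{\otimes m}$ is also the right object. Any filling is flat over the regular surface $Y$ with nodal fibres, hence Cohen--Macaulay, and its central fibre has codimension $2$. So its $\pi_*L^{\otimes m}$ is forced to be the Hartogs extension, and the filling must equal this $\Proj$.

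That observation only re-proves uniqueness, though. Existence is the whole content of the theorem, and you assert it rather than prove it: you need the algebra to be finitely generated, $C$ to be flat with nodal central fibre, and $\cO(1)$ to be invertible and relatively ample. Your sketch of the local step is also not right as stated. The fan over $Y$ is not a ``closure'' of the fan over $Y^\circ$, because the subdivisions of the two boundary faces lying over $Y_u$ and $Y_v$ extend to the whole cone in many ways. What must be shown concerns the subdivision the section ring actually selects, namely the normal fan of the polyhedron cut out by the piecewise-linear function of $L^\circ$:
\begin{itemize}
\item its rays are facet normals, hence lie on those two faces;
\item so near a node every maximal cone is spanned by at most two rays over $Y_u$ and at most two over $Y_v$, which after a shear is the cone of $ab=u^\alpha v^\beta$;
\item its slopes are integral, so $\cO(1)$ is a line bundle.
\end{itemize}
Before any of this, you must justify that the problem is formal-local and toric at all. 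That requires flat base change of $j_*$ and $\Proj$ to the completion. It also requires the $\bG_m$-equivariance you said you would forget, since that is what identifies each branch family with the pullback of a toric partial resolution of $xy=\pi^m$ from $\Spec R$. The fallback via depth and Euler characteristics is not an argument; \cite[Lemma 4.3]{di2022degenerations} concerns extending isomorphisms, not flatness.

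More seriously, your local picture assumes $\pi^\circ\colon C^\circ\to\overline{C}^{\st}_{Y^\circ}$ is birational. This fails when a node of $\overline{C}^{\st}$ persists over the generic point and $C^\circ$ has rational bridges in its generic fibre. Then $f=0$, whole components of $C^\circ$ are contracted onto the node locus, and there is no toric resolution to analyse. The paper handles exactly this case by normalizing and filling each piece separately. The genus-$0$ two-marked bridge pieces, which have no stabilization to take $\Proj$ over, are filled via \Cref{T:s-complete-generically-smooth-genus-zero}, and the pieces are reglued by pushouts (\Cref{T:s-complete}).

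For the generically smooth pieces the paper avoids combinatorics entirely. It builds a regular model and extends $L$ using regularity. It then makes $L$ nef relative to the stabilization by a terminating sequence of Atiyah flops (\Cref{L:flip-rational-curves}, \Cref{L:always-is-atiyah-flop}, \Cref{L: nef-algorithm}), and only then takes $\Proj$ via \Cref{Lem: semistable_contraction_rel_nef_is_rel_SA}. Your route could replace the flop step, and it has the merit of giving the filling by a canonical formula. It only works, however, once the toric analysis above is actually carried out and the persistent-node case is added.
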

\begin{proof}
    By \Cref{lemma: complete forgetful}, it suffices to show that both $\mathscr{X}_{g,n}$ and $\mathscr{X}_{g,n}^{\rep}$ are S-complete and $\Theta$-complete. In view of \Cref{lemma_extending_stacky_curve}, it is enough to show that $\mathscr{X}_{g,n}^{\rep}$ is S-complete and $\Theta$-reductive. This is the content of \Cref{T:s-complete} below.
\end{proof}

The rest of this section is dedicated to proving the $\Theta$-completeness and $S$-completeness of $\mathscr{X}_{g,n}^{\rep}$, which was used in the proof of \Cref{T:s-complete-and-theta-red}. In particular, for the rest of this section we work with non-twisted families of curves, which are fiberwise schemes.

\subsection{Families of curves with regular total space}

%\begin{lemma} \label{lemma: identification of semistable curves with regular total space}
%Let $R$ be a DVR, and let $C$ and $C'$ be two families of curves over $\Spec(R)$. Suppose that the following hold:
    %\begin{itemize}
        %\item The total spaces $C$ and %$C'$ are regular.
        %\item There is a family of nodal curves $C''\to \spec(R)$ with two birational and surjective morphisms $f\colon C\to C''$ and $f'\colon C'\to C''$, such that $\omega_{C/\spec(R)}$ and $\omega_{C'/\spec(R)}$ are nef over $C''$.
    %\end{itemize}
    %Then there is a unique isomorphism $\psi\colon C\to  C'$ such that the composition \[\psi\circ f'=f.\] The same results holds if we replace $\Spec(R)$ by $\mathbb{A}^1_K$ for some field $K$.
%
    %Moreover, if $\widetilde{C}\to \spec(R)$ is a family of nodal curves with smooth total space and with a birational and surjective morphism $\widetilde{C}\to C''$, then there is a factorization
    %\[
    %\xymatrix{ \widetilde{C}\ar[rr] %\ar[dr] & & C \ar[dl] \\ & C''. &}
    %\]
%\end{lemma}
\begin{lemma}\label{lemma: min resolution family of nodal curves}
    Let $R$ be a DVR {with fraction field $K$,} and let $T$ be either $\spec(R)$ or $\bA^1_K$. Consider a family of nodal curves $C\to T$ with smooth generic fiber. Then there is a unique family of nodal curves $C'\to T$ with a birational morphism $\varphi'\colon C'\to C$ over {$T$}, such that:
    \begin{enumerate}[(1)]
        \item $C'$ is regular, and
        \item {$\omega_{C'/T}$} is nef {relative to} $C$.
    \end{enumerate}
    Moreover, the morphism {$\varphi'$} is an isomorphism away from the singular locus of $C$, and $C'\to C$ is minimal in the following sense: given any family of nodal curves $C''\to T$ with a birational morphism $\varphi''\colon C''\to C$ over {$T$} and with $C''$ regular, there is a morphism {$b\colon C''\to C'$} such that $\varphi'' = \varphi'\circ b$. 
\end{lemma}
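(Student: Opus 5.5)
We will construct $C'$ explicitly by resolving the singularities of the total space of $C$ étale-locally at its nodes, and then show that it is the minimal resolution in the surface-theoretic sense. Since the generic fiber is smooth, the singular points of the total space $C$ lie over the closed point of $\Spec(R)$ in the DVR case. In the $\bA^1_K$ case they lie over finitely many closed points of $\bA^1_K$; in fact the singular locus of the fibers meets only finitely many fibers, because it is closed and does not meet the generic fiber. Hence the non-regular points of $C$ form a finite set of nodes of closed fibers.

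At such a node, the complete local ring of $C$ is of the form $\widehat{\cO}_{T,t}[[x,y]]/(xy - u\pi^m)$, where $\pi$ is a uniformizer of the DVR $\cO_{T,t}$, $u$ is a unit and $m \geq 1$ (with $m = 1$ exactly when $C$ is regular there). This is the standard local structure of a nodal family over a one-dimensional regular base, as in \cite[\href{https://stacks.math.columbia.edu/tag/0CBY}{Tag 0CBY}]{stacks-project}. It is an $A_{m-1}$ singularity. Its minimal resolution is obtained by $\lfloor m/2\rfloor$ successive blowups of the singular point, or equivalently by a toric resolution. The exceptional locus is a chain of $m-1$ smooth rational $(-2)$-curves, and the resulting fibers are still nodal.

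Gluing these local resolutions over the finitely many singular points gives $\varphi'\colon C' \to C$. It is an isomorphism away from $\mathrm{Sing}(C)$, and $C'$ is regular with nodal fibers, since the exceptional chain replaces the node by a chain of $\bP^1$'s meeting transversally. For nefness, note that every $\varphi'$-exceptional curve $E$ is a smooth rational curve with $E^2 = -2$, so adjunction on the regular surface $C'$ gives $\omega_{C'/T}\cdot E = 0$. Hence $\omega_{C'/T}$ is nef relative to $C$, and in fact relatively trivial.

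For minimality, take $C''$ regular with a birational map $\varphi''\colon C'' \to C$. Then $\varphi''$ is a resolution of the normal surface $C$ (normal because it is flat over $T$ with reduced fibers and regular generic fiber, hence $R_1+S_2$). By the theory of minimal resolutions of excellent two-dimensional schemes (Lipman; \cite[\href{https://stacks.math.columbia.edu/tag/0C2E}{Tag 0C2E}]{stacks-project}), any resolution factors through the minimal one. Moreover, the minimal resolution is characterized as the unique resolution with no $(-1)$-curves in its exceptional locus, or equivalently with $\omega$ relatively nef. Our $C'$ has this property, so $\varphi'' = \varphi' \circ b$ for some $b\colon C'' \to C'$.

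The same characterization gives uniqueness. Any $C'$ satisfying (1) and (2) has no exceptional $(-1)$-curves: such a curve $E$ would satisfy $\omega\cdot E = -1 < 0$, contradicting nefness. So it is the minimal resolution, and any two such are uniquely isomorphic over $C$. Here we use that a birational proper morphism between regular surfaces factors into blowups of points (Castelnuovo/Lichtenbaum), which is implicit in the minimality statement.

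The main obstacle is justifying the local model $xy = u\pi^m$ at each non-regular point and checking that the resolution has nodal fibers. A second point is the passage from $\Spec(R)$ to $\bA^1_K$. There we argue locally around each of the finitely many bad fibers, localizing at the corresponding closed point of $\bA^1_K$ to reduce to the DVR case, and then glue, since $\varphi'$ is an isomorphism elsewhere.
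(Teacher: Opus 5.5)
Your proposal is correct and follows essentially the same route as the paper. You resolve the $A_{m-1}$ points of the total space by iterated blowups of the reduced singular locus, and you check that $\omega_{C'/T}$ has degree $0$ on the exceptional curves (via adjunction on $(-2)$-curves, where the paper does a direct computation in the local model). You then get minimality and uniqueness by comparing with the minimal resolution, since a nontrivial factorization into point blowups would end with a $(-1)$-curve on which $\omega$ is negative, exactly as in the paper.

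One point to adjust: the model $xy=u\pi^m$, and the description of the exceptional curves as smooth $(-2)$-curves, hold only \'etale locally, i.e.\ after strict henselization as the paper does. A node over a non-separably-closed residue field (which occurs, e.g., over closed points of $\mathbb{A}^1_K$) may be non-split. This is harmless, because regularity, nodality and the degree computation can all be checked \'etale locally.
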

\begin{remark}
    The morphism $C'\to C$ of \Cref{lemma: min resolution family of nodal curves} is a \textit{minimal resolution}; see \cite[Thm. 2.8]{surface_resol_ref}.
\end{remark}

% \begin{remark}
%     In particular, two families of semistable nodal curves $C\to T$ and $C'\to T$, with smooth generic fiber, smooth total space, and isomorphic stabilization, are isomorphic.
% \end{remark}

\begin{remark}
    Let $R$ be a DVR, let $T$ be either $\spec(R)$ or $\bA^1_K$, and consider a family of marked nodal curves $(C,\sigma_\bullet)\to T$ with smooth generic fiber, as in \Cref{lemma: min resolution family of nodal curves}. Since the family of curves $C'\to C$ of \Cref{lemma: min resolution family of nodal curves} is an isomorphism on the smooth locus of $C \to T$, the sections $\sigma_{\bullet}$ lift to sections $\sigma_\bullet'$ of $C'\to T$.
\end{remark}

\begin{proof}[Proof of \Cref{lemma: min resolution family of nodal curves}]
{We provide a proof when $T= \Spec(R)$; the argument for $T = \mathbb{A}^1_K$ is similar.} Let $p\in C$ be a singular point of the total space of $C$. Since $C\to \spec(R)$ is a family of nodal curves, from \cite[\href{https://stacks.math.columbia.edu/tag/0CDD}{Tag 0CDD}]{stacks-project} there is an isomorphism
\[
{\cO_{C,p}^{sh}\cong (R[s,t]/(st-\pi^m))^{sh}}
\]
{where $m \ge 1$} and $\pi$ is the uniformizer of $R$. One can check by an explicit computation that:

\begin{enumerate}
    \item to resolve the singularity of {$\spec(R[s,t]/(st-\pi^m))$} when {$m > 1$}, one can consecutively perform a blow-up of the ideal sheaf of the singular point, with the reduced structure; and
    \item the fiber over $\pi=0$ has nodal singularities.
\end{enumerate}
The property of being regular and nodal over $\spec(R)$ are local in the \'etale  topology \cite[\href{https://stacks.math.columbia.edu/tag/0C5D}{Tag 0C5D}]{stacks-project} and blow-ups commute with flat (hence \'etale) base change \cite[\href{https://stacks.math.columbia.edu/tag/085S}{Tag 085S}]{stacks-project}, so the blow-ups of $C$ at its singular points, with the reduced structure, will lead to a family of nodal curves {$C' \to \spec(R)$} with {$C'$} regular. 

To check that $\omega_{C'/\spec(R)}$ is {fiberwise} trivial over $C$, it suffices to show that its restriction to every exceptional divisor has degree 0. This can again be checked for $\spec(R[s,t]/(st-\pi^m))$, where it is an explicit computation.

We argue now that $C'\to C$ is the minimal resolution of \cite[Thm. 2.8]{surface_resol_ref}; which will conclude the argument.
Let $C^{m}\to C$ be the minimal resolution of \cite[Thm. 2.8]{surface_resol_ref} and let $\xi\colon C'\to C^m$ be the map of \cite[Thm. 2.8]{surface_resol_ref}. From \cite[Thm. 9.2.2]{Liu_book}, the map $\xi$ is a composition of {$\ell$-many} blow-ups along closed points. Let's assume by contradiction that {$\ell>0$} and let $E$ be the last exceptional divisor extracted by these blowups. Then $E^2=-1$ and $\omega_{C'/\spec(R)}(E)|_{E}\cong \cO_\bP^1(-2)$ so \[\omega_{C'/\spec(R)}.E<0.\]
But $E$ is also contracted by $C'\to C$, which contradicts that $\omega_{C/\spec(R)}$ is nef over $C'$.
\end{proof}
\begin{lemma} \label{lemma: blowing up to fill to a regular family of semistable curves} Let $R$ be a DVR with algebraically closed residue field, and let $\cY$ be either $\overline{ST}_R$ or $\Theta_R$.
    Let $\widehat{C}$ be a family of nodal curves over $\Spec(R)$ with smooth generic fiber. Then there exists a family of nodal curves $C\to \cY$ such that:
    \begin{enumerate}[(a)]
        \item $C$ is regular and $\omega_{C/\spec(R)}$ is {fiberwise} trivial over $\widehat{C}\times_{\spec(R)}\cY$.
        \item There is a morphism $C\to \widehat{C}\times_{\Spec(R)} \cY$ which contracts only rational bridges.
    \end{enumerate}
    In particular, if $\widehat{C}$ is stable, then $C\to \cY$ is semistable.
\end{lemma}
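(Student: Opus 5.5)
The plan is to reduce to the minimal resolution of \Cref{lemma: min resolution family of nodal curves}, applied to a cover $Y \to \cY$, and then descend along the $\bG_m$-action. Set $X := \widehat{C}\times_{\Spec(R)} Y$, where $Y$ is as in \Cref{notn: Y over the stack}. This is a family of nodal curves over the regular $2$-dimensional scheme $Y$, and it carries a $\bG_m$-action. Its total space is singular only over the nodes of the special fiber $\widehat{C}_0$.

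\textbf{Local model.} Étale locally near a node of $\widehat{C}_0$ the family looks like $xy=\pi^m$, so $X$ is étale locally
\[ R[s,t]/(st-\pi)\,[x,y]/(xy-s^mt^m) \quad\text{or}\quad R[t][x,y]/(xy-\pi^m). \]
In the $\overline{ST}_R$ case the singular locus is the point over $\mathfrak{o}$ together with curves over $s=0$ and over $t=0$. In the $\Theta_R$ case it is the curve $\{\pi=0, x=y=0\}$. The resulting threefold singularities are toric, and the torus contains the $\bG_m$ acting on $Y$.

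\textbf{Constructing $C$.} I will build $C$ as a $\bG_m$-equivariant small toric resolution that is fiberwise a nodal curve over $Y$. Concretely, this means inserting a chain of $m-1$ rational curves over each node, so that the nodes of the fibers are locally given by $x_iy_i = \text{(a unit times) } s^{a}t^{b}$ with the exponents adjusted until the total space is regular.

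Over the generic points of the two divisors $s=0$ and $t=0$ (respectively, over $\pi=0$), this must restrict to the minimal resolution of \Cref{lemma: min resolution family of nodal curves}. On $\Theta_R$, the resolution away from $\mathfrak{o}$ is $\bA^1_K$-type, or a base change of a DVR, and \Cref{lemma: min resolution family of nodal curves} applies with $T=\Spec(R)$ or $\bA^1_K$. Uniqueness and minimality in that lemma glue these local resolutions and make them $\bG_m$-equivariant on $Y^\circ$.

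\textbf{Main obstacle: the point $\mathfrak{o}$.} The hard step is extending over $\mathfrak{o}$, where the base is not a curve, so the lemma no longer applies directly. There I will use the explicit toric description of the local model. I expect to perform either successive blow-ups of the Weil divisors $\{x=s^i t^j\}$, or a small resolution via subdividing the cone of $xy=s^mt^m$ into unimodular cones using the chain of lattice points $(i,m-i)$. I will then check that the result is regular and that each fiber over $Y$ is a nodal curve. I am aware that a single small resolution may not simultaneously restrict to the minimal ones on both branches. If so, I will pass to a ramified DVR extension $R\subset R'$, which is allowed since the residue field is algebraically closed, and choose a subdivision compatible with both.

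\textbf{Remaining properties.} The exceptional curves are chains of $\bP^1$s over nodes, each meeting the rest in two points. Hence the map to $X$ contracts only rational bridges, giving (b). For (a), the relative dualizing sheaf restricted to each exceptional $\bP^1$ has degree $0$, by adjunction in the small-resolution setting, so $\omega$ is fiberwise trivial over $X$. Since the whole construction is $\bG_m$-equivariant and canonical, it descends from $Y$ to $\cY = Y/\bG_m$. Finally, if $\widehat{C}$ is stable, then the fibers of $C$ are stable curves with rational bridges inserted, and so $C \to \cY$ is semistable.
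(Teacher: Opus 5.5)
\textbf{The gap: your construction over $\mathfrak{o}$ is never globalized.} You leave the step at $\mathfrak{o}$ open (``I expect to perform either \dots''). More seriously, whatever resolution you pick there exists only on an \'etale neighbourhood of each node of $X_{\mathfrak{o}}$. In fact it exists only on a formal one: $x,y$ are local coordinates, and $xy=s^mt^m$ is toric only after completing. Your gluing mechanism is ``uniqueness and minimality'' from \Cref{lemma: min resolution family of nodal curves}, and that lemma only covers one-dimensional bases, so it works only over $Y^\circ$. At $\mathfrak{o}$ the resolution is not unique: different unimodular triangulations differ by Atiyah flops. So nothing canonical forces your local pieces to glue to the resolution of $X\setminus\{\text{nodes of }X_{\mathfrak{o}}\}$ obtained from $Y^\circ$ and the regular locus. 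To conclude you need two things, and neither appears in your proposal:
(i) a theorem gluing a stack over $X\setminus\{\text{nodes of }X_{\mathfrak{o}}\}$ with stacks over the completions at those nodes, such as \Cref{T:formal_glueing};
(ii) an isomorphism between your local resolution and that resolution over each punctured completion.

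\textbf{How to close it, and how the paper does it.} Point (ii) is in fact true. Take $s,t,x$ as a basis of the character lattice; then the cone of $xy=s^mt^m$ has rays $(1,0,0),(0,1,0),(1,0,m),(0,1,m)$. Every unimodular triangulation of this $1\times m$ rectangle uses all of its lattice points $(1,0,j),(0,1,j)$. It gives a regular family with nodal fibers over $Y$. Away from the torus-fixed point it restricts to the same smooth fan, namely the subdivided boundary. That fan is exactly the local model of $\widetilde{C}\times_{\Spec(R)}Y$, where $\widetilde{C}$ is the minimal resolution of $\widehat{C}$ over $\Spec(R)$. Two consequences follow. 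First, your worry about compatibility with both branches is unfounded. Second, the ramified extension of $R$ is unnecessary, and it would in any case change the statement, which fixes $R$.

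This observation is precisely the paper's proof. It first replaces $\widehat{C}$ by $\widetilde{C}$, so every node becomes $xy=\pi$. For $\cY=\Theta_R$, $\widetilde{C}\times_{\Spec(R)}\cY$ is then regular by smooth base change, with no analysis at $\mathfrak{o}$ needed. For $\cY=\overline{ST}_R$, its only singularities are isolated points $xy=st$ over $\mathfrak{o}$. Each is resolved by blowing up $(x,s)$ in the completed local ring, and the pieces are glued by \Cref{P:nisnevich_pushout}, which applies because this modification is an isomorphism away from finitely many points.

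Finally, ``small'' is inaccurate for $m\ge 2$: the transversal $A_{m-1}$ curves over $Y_u$ and $Y_v$ force exceptional divisors. This is another reason to do the minimal resolution over $\Spec(R)$ first.
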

\begin{proof}Consider the family of curves $\widetilde{C}\to \widehat{C}$ of \Cref{lemma: min resolution family of nodal curves}, and let $C'$ be the fiber product
\[
\xymatrix{C'\ar[r]\ar[d] & \widetilde{C}\ar[d]\\ \cY\ar[r] & \spec(R). }
\]
If $\cY=\Theta_R$, the morphism $\cY\to \spec(R)$ is smooth, so its pullback $C'\to \widetilde{C}$ is also smooth. Since $\widetilde{C}$ is regular, we have that $C'$ is regular.

If instead $\cY= \overline{ST}_R$, then $C'$ is not regular. To conclude the proof, we claim that there is a family of nodal curves $C\to \overline{ST}_R$ with total regular space and equipped with a proper birational morphism $C \to C'$ that is an isomorphism away from the singular points of $C'_{\mathfrak{o}}$. Let $Z \subset C'$ denote the union of the preimages of the nodes in the special fiber of $\widetilde{C}$. Note that the sought-after modification $C \to C'$ will be an isomorphism over the complement of $Z$. By \Cref{P:nisnevich_pushout}, in order to construct such modification $C \to C'$, we may replace $C'$ with the base-change $C'\times_{\widetilde{C}} \Spec(\widehat{\cO}_{\widetilde{C}, p})$ over the completion of $\widetilde{C}$ at each of the nodes $p \in \widetilde{C}$ in the special fiber. Since the residue field of $\spec(R)$ is algebraically closed, we have an isomorphism $\widehat{\cO}_{\widetilde{C}, p} \cong \widehat{R}\bseries{x,y}/(xy-\pi)$ \cite[\href{https://stacks.math.columbia.edu/tag/0CBX}{Tag 0CBX}]{stacks-project}, and hence an isomorphism 
\[C'\times_{\widetilde{C}} \Spec(\widehat{\cO}_{\widetilde{C}, p}) \cong \Spec(\widehat{R}\bseries{x,y}[s,t]/(xy-st))/\mathbb{G}_m.\]
where $\mathbb{G}_m$ acts trivially on $\widehat{R}\bseries{x,y}$ and with the usual weights on the coordinates $s$ and $t$. To conclude, we set $C:= B/\mathbb{G}_m$, where $B$ is the blowup of $\Spec(\widehat{R}\bseries{x,y}[s,t]/(xy-st))$ at the $\mathbb{G}_m$-equivariant closed subscheme cut out by the ideal $(x,s)$. The fact that $B$ is regular, $\cY$-flat and has nodal $\cY$-fibers follows from direct computation.

\end{proof}

For the following result we use the terminology in \Cref{notn: Y over the stack}.
\begin{coroll} \label{coroll: S-filling for families of semistable curves with regular total space}
    Let $R$ be a DVR with algebraically closed residue field, and let $\cY$ be either $\overline{ST}_R$ or $\Theta_R$. Let $(C^{\circ},\sigma_{\bullet}^{\circ})$ be a family of $n$-marked semistable curves over $\cY^{\circ}$, whose stabilization is a family of nodal marked curves. Suppose that the total space $C^{\circ}$ is regular. Then, there exists a family $(C,\sigma_{\bullet})$ of $n$-marked semistable curves over $\cY$ extending $(C^{\circ},\sigma_{\bullet}^{\circ})$ such that $C$ is regular. 
\end{coroll}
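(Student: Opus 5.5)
The plan is to reduce the corollary to \Cref{lemma: blowing up to fill to a regular family of semistable curves}. The input for that lemma is a family $\widehat{C}$ of nodal curves over $\Spec(R)$ with smooth generic fiber. We obtain it from the stabilization.

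\textbf{Step 1.} Let $(C^{\circ,\st},\sigma^{\circ,\st}_\bullet)$ be the stabilization of $(C^\circ,\sigma^\circ_\bullet)$; it defines a morphism $\cY^\circ \to \overline{\cM}_{g,n}$. Since $\overline{\cM}_{g,n}$ is separated with finite inertia, \cite[Lem. 3.6, Lem. 3.45]{alper2023existence} applies, as in the proof of \Cref{lemma: complete forgetful}. So this morphism extends to $\cY$ and factors through $\cY \to \Spec(R)$, giving a stable family $(\widehat{C},\widehat{\sigma}_\bullet)$ over $\Spec(R)$. We need the generic fiber of $\widehat{C}$ to be smooth. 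I expect this to follow from regularity of $C^\circ$. Over the generic point $\eta$, which lies in $\cY^\circ$, the total space of a family over a field is just its fiber. Regularity of $C^\circ$ therefore forces the curve over $\eta$ to be regular, hence smooth in characteristic $0$, and its stabilization is then smooth too. (If one only wants ``nodal'' rather than smooth, one can instead run the argument locally, but the smooth case is what the lemma requires.)

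\textbf{Step 2.} Apply \Cref{lemma: blowing up to fill to a regular family of semistable curves} to $\widehat{C}$. This gives a regular family of nodal curves $C \to \cY$ together with a morphism $C \to \widehat{C}\times_{\Spec(R)}\cY$ contracting only rational bridges. Since $\widehat{C}$ is stable, $C$ is semistable. The markings $\widehat\sigma_\bullet$ lie in the smooth locus, where the modifications are isomorphisms, so they lift to sections $\sigma_\bullet$ of $C$.

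\textbf{Step 3 (main obstacle).} We must show that $(C,\sigma_\bullet)|_{\cY^\circ}\cong (C^\circ,\sigma^\circ_\bullet)$ compatibly with the maps to the stabilization. Both families over $\cY^\circ$ are regular, semistable, and have the same stabilization $\widehat{C}_{\cY^\circ}$. To identify them I would use the uniqueness and minimality in \Cref{lemma: min resolution family of nodal curves}, working on the two (or three) charts covering $\cY^\circ$: copies of $\Spec(R)$, and for $\Theta_R$ also $\Theta_\eta$, or $\bA^1_K$ after passing to the atlas $Y^\circ$.

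On each chart $T$, the restriction of $C^\circ$ is a regular nodal family mapping birationally to $\widehat{C}_T$, and its relative dualizing sheaf is pulled back from the stabilization because the contracted curves are rational bridges. So it is nef (indeed trivial) relative to $\widehat{C}_T$. The uniqueness clause of \Cref{lemma: min resolution family of nodal curves} then identifies it with the minimal resolution of $\widehat{C}_T$. For $\Theta_R$ this resolution is the pullback of $\widetilde{C}$, which agrees with $C|_T$. For $\overline{ST}_R$, $C$ restricts on each chart $\Spec(R)\subset\cY^\circ$ to a regular family with trivial relative $\omega$ over $\widehat{C}$, so again it is the minimal resolution.

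These identifications are canonical, since the minimal resolution has no automorphisms over $\widehat{C}$. They therefore glue over the overlaps, and, working on $Y^\circ$, they are $\bG_m$-equivariant. This yields an isomorphism over $\cY^\circ$ respecting the markings. The delicate points are making the uniqueness statement work over the non-Noetherian-local chart structure of $\cY^\circ$, and handling the equivariance; I would verify both on the atlas $Y^\circ$ and descend.
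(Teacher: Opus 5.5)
There is a genuine gap in Step 3: it ignores rational tails. In this paper an $n$-marked semistable curve may contain, besides rational bridges, rational tails carrying exactly one marking on the terminal component. Such families do occur with regular total space. Examples are the blow-up of $\widehat{C}$ over one branch $\Spec(R)\subset\cY^\circ$ at the point where a marking meets the special fiber, or the $\bG_m$-equivariant blow-up of $\widehat{C}_K\times\bA^1_K$ at $(\sigma_i,0)$ on the $\Theta_K$ chart.

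For such $C^\circ$, your claim that $\omega_{C^\circ/T}$ is pulled back from the stabilization is false. It has degree $-1$ on the terminal component of each tail; only $\omega_{C^\circ/T}(\sum_i\sigma_i^\circ)$ is nef. So $\omega_{C^\circ/T}$ is not nef relative to $\widehat{C}_T$, and the uniqueness clause of \Cref{lemma: min resolution family of nodal curves} does not apply.

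Worse, the family $C$ you build in Step 2 cannot be the right extension at all. The map $C\to\widehat{C}\times_{\Spec(R)}\cY$ contracts only rational bridges and is an isomorphism near the markings. Hence $C|_{\cY^\circ}$ has no rational tails and is not isomorphic to $C^\circ$. Your argument therefore only covers the case where $C^\circ$ has no tails (e.g.\ $n=0$). That is not enough for the later uses, such as families in $\mathscr{X}^{\rep}_{g,n}$ and the normalized components with extra markings $\xi_\bullet$ in \Cref{T:s-complete}.

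The missing ingredient is a further modification after Step 2. Use the minimality clause of \Cref{lemma: min resolution family of nodal curves}, not uniqueness, to get a morphism $C^\circ_{Y^\circ}\to C_{Y^\circ}$ over the atlas. By \cite[Thm.~9.2.2]{Liu_book} it is a composition of blow-ups. Nefness of $\omega_{C^\circ}(\sum_i\sigma_i^\circ)$ forces every center to lie where (strict transforms of) the marking sections meet the preimage of $Y_u$ or $Y_v$, so each center is a regular codimension-two locus inside a section.

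Their closures in $C_Y$ (and in the successive blow-ups) are again regular, being isomorphic to $Y_u$ or $Y_v$. Blowing these closures up iteratively gives a regular family over $Y$ restricting to $C^\circ_{Y^\circ}$. The sections lift, since strict transforms of sections are blow-ups along Cartier divisors. All centers are $\bG_m$-invariant, so the result descends to $\cY$. This is how the paper's proof proceeds, and it produces the rational tails over $\mathfrak{o}$ that your construction misses.
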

Note that the assumption that the stabilization is a family of nodal marked curves is not satisfied only when $C^{\circ}$ has genus 0 and $n<3$, or when $C^{\circ}$ has genus 1 and $n=0$.
\begin{proof}
    The stabilization of $(C^{\circ},\sigma_\bullet^{\circ})$ corresponds to a family $\cY^{\circ} \to \overline{\cM}_{g,n}$. Since $\overline{\cM}_{g,n}$ is separated and has finite inertia, this extends to a family of stable curves $\cY \to \overline{\cM}_{g,n}$ \cite[Prop. 3.23, Prop. 3.46]{alper2023existence}. Furthermore, the morphism $\cY \to \overline{\cM}_{g,n}$ factors through $\cY \to \Spec(R)$ by \cite[Lem. 3.6, Lem. 3.45]{alper2023existence}. In other words, there is a family $(C^{\st},\sigma_{\bullet}^{\st})$ over $\Spec(R)$ whose base-change to $\cY^{\circ}$
    recovers the stabilization of $(C^{\circ},\sigma_\bullet^{\circ})$. By \Cref{lemma: blowing up to fill to a regular family of semistable curves}, there is a family $(\widetilde{C},\widetilde{\sigma}_\bullet)$ of marked nodal curves with $\widetilde{C}\to \cY$ regular and such that the stabilization of $(\widetilde{C},\widetilde{\sigma}_\bullet)$ is isomorphic to $(C^{\st},\sigma_\bullet^{\st})\times_{\Spec(R)} \cY$.

    Denote by $u,v$ the pair of functions on $Y$ cutting out $\mathfrak{o}$, so that $v=t$ and $u= s$ or $\pi$ depending on whether $\cY = \overline{ST}_R$ or $\Theta_R$. Note that $\cY^{\circ}$ is the union of two opens
     \[\cY_u := \{v \neq 0\}/\bG_m \qquad \text{and} \qquad \cY_v := \{u \neq 0\}/\bG_m\]
     glued along the spectrum $\Spec(K)$ of the fraction field of $R$. These opens are isomorphic to either $\Spec(R)$, $\mathbb{A}^1_K$  or $\Spec(R) \times B\mathbb{G}_m$. 
    
    Consider the pulled-back families 
    \[
    (C_1,\sigma_{\bullet,1}):=(\widetilde{C},\widetilde{\sigma}_\bullet)\times_\cY Y\text{, }(C_1^\circ,\sigma_{\bullet,1}^\circ):=(\widetilde{C},\widetilde{\sigma}_\bullet)\times_\cY Y^\circ\text{ and }(C_2^\circ,\sigma_{\bullet,2}^\circ):=(C^\circ,{\sigma}^{\circ}_\bullet)\times_\cY Y^\circ
    \]
    where {$Y$ is as in \Cref{notn: Y over the stack}.} In particular, $Y^{\circ}$ is the union of two schemes which are either two copies of $\Spec(R)$, or a a copy of $\mathbb{G}_m \times \Spec(R)$ and $\bA^1_K$, along common open subsets. From the minimality in \Cref{lemma: min resolution family of nodal curves}, and since the family $\widetilde{C}$ satisfies that $\omega_{\widetilde{C}/\cY}$ is {fiberwise} trivial over {$C^{\st} \times_{\Spec(R)} \cY$}
    there is a map $C_2^\circ\longrightarrow C_1^\circ.$
    
    From \cite[Thm. 9.2.2]{Liu_book} this map is a composition of blow-ups. Since by assumption $(C_2^\circ,\sigma_{\bullet,2}^\circ)\to Y^{\circ}$ is semistable with smooth generic fiber, the divisor $\omega_{C_2^{\circ}}(\sigma_{\bullet, 2}^{\circ})$ is nef {relative to} $Y^{\circ}$. Therefore these blow-ups are as follows. 
    If $Y= \spec(R[s,t]/(st-\pi))$, they are supported at the intersection of some of the sections $\sigma_{\bullet,2}^{\circ}$ and the locus in $Y^{\circ}$ given by $u\cdot v=0$; we now perform these blow-ups iteratively.
    
    Consider $\Delta_1^\circ\subseteq C_1^\circ$ an irreducible component of the reduced closed subscheme of $C_1^{\circ}$ where the birational map $C_1^\circ\dashrightarrow C_2^\circ$ is not defined. From the previous paragraph and since $\Delta_1^\circ$ is assumed to be irreducible, it 
    is contained in the intersection of the image in $C_1^\circ$ of some of the sections $\sigma_{\bullet,2}^{\circ}$ and the preimage in $C_1^\circ$ of the locus in $Y^{\circ}$ given by either $u=0$ or $v=0$. In particular, $\Delta_1^{\circ}$ is a codimension-two regular and closed subscheme of $C_1^{\circ}$. Let $b_2\colon B_2^{\circ}\to B_1^{\circ}=C_1^{\circ}$ be the blow-up along $\Delta_1^\circ$. The proper transform in $B_2^{\circ}$ of the image of each section $\sigma^{\circ}_{\bullet,2}$ is still the image of a section of $B_2^{\circ}$, as from \cite[Cor. II.7.15]{Hartshorne_AG} the proper transform $\widetilde{\sigma_{i,2}(Y^{\circ})}\to \sigma_{i,2}(Y^{\circ})$ of the image of each section $\sigma_{i,2}(Y^{\circ})$ is the blow-up of a codimension one integral subscheme in $\sigma_{i,2}(Y^{\circ})$, i.e. a Cartier divisor. In particular, there are sections $\sigma_{\bullet,2}^{(2)}$ of $B_2^{\circ}\to Y^{\circ}$ such that $b_2\circ\sigma_{i,2}^{(2)}  = \sigma_{i,2}$ for every $i$. We proceed iteratively 
    and get a sequence of blow-ups
    \[
    B_\ell^{\circ}= C_2^{\circ}\to B_{\ell-1}^{\circ}\to\ldots\to B_1^{\circ} = C_1^{\circ}
    \]
    with $\Delta_1^{\circ},\ldots,\Delta_{\ell-1}^{\circ}$
    are the loci we are blowing up, with $\Delta_i^{\circ}\subseteq B_i^{\circ}$. We now construct extensions $B_i \to Y$ of the family $B_i^{\circ}$ by setting $B_1 = C_1$ and iteratively letting $B_{i+1}$ be the blow up of $B_i$ along the closure $\Delta_i$ of $\Delta_{i}^{\circ}$ in $B_{i}$. One can proceed by induction to argue that $\Delta_i$ is regular, as it is contained in a section and cut out by either $u=0$ or $v=0$, and so it is isomorphic to the locus in $Y$ cut out by either $u=0$ or $v=0$, which is regular. As before using \cite[Cor. II.7.15]{Hartshorne_AG}, the sections $\sigma_{\bullet,2}^{\circ}$ extend to sections {of $B_{\ell} \to Y$} which we denote by $\sigma_{\bullet,B}$. The pair $(B_\ell,\sigma_{\bullet,B})\to Y$ is a $Y$-family of marked curves extending $(C_2^\circ,\sigma_{\bullet,2}^\circ)$.

    Finally observe that these blow-ups are all blow-ups of $\bG_m$-equivariant loci, so the action of $\bG_m$ on $(B_\ell,\sigma_{\bullet,B})|_{Y^{\circ}}$ extends to an action on $(B_\ell,\sigma_{\bullet,B})$, giving the desired extension $(C,\sigma_\bullet)$ over $\cY$.
\end{proof}

\begin{lemma} \label{lemma: filling for genus 0 families of semistable curves with regular total space}
    Let $R$ be a DVR, and let $\cY$ be either $\overline{ST}_R$ or $\Theta_R$. Let $(C^{\circ}, \sigma_1^{\circ}, \sigma_2^{\circ})$ be a family of $2$-marked genus $0$ semistable curves over $\cY^{\circ}$. Suppose that the total space $C^{\circ}$ is regular. Then, there exists a $2$-marked family $(C, \sigma_1, \sigma_2)$ of semistable curves over $\cY$ extending $(C^{\circ}, \sigma_1^{\circ}, \sigma_2^{\circ})$ such that $C$ is regular.
\end{lemma}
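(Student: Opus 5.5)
Genus $0$ with two markings has no stabilization, so \Cref{coroll: S-filling for families of semistable curves with regular total space} does not apply directly. Instead I will replace the role of $\overline{C}^{\st}$ by the trivial family $\bP^1_R$ with markings $0$ and $\infty$.

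A $2$-marked genus $0$ semistable curve over an algebraically closed field is a chain of $\bP^1$'s with the two markings on the two end components. In particular the generic fiber of $C^\circ$, over $\Spec(K) \subset \cY^\circ$, is $\bP^1_K$ with two distinct marked points. After an automorphism it is $(\bP^1_K,0,\infty)$, so the family over $\cY^\circ$ is generically $(\bP^1,0,\infty)\times \cY^\circ$.

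\textbf{Step 1 (a common model).} The first step is to show that $C^\circ$ dominates the trivial family. Write $Y^\circ$ as the union of its two charts, $\{u\neq 0\}$ and $\{v \neq 0\}$ as in the proof of \Cref{coroll: S-filling for families of semistable curves with regular total space}. On each chart the family is a regular family of chains with $2$ sections, with smooth fiber over the generic point.

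Contracting all components not meeting $\sigma_1$ gives a regular model that is a $\bP^1$-bundle. One way to do this is to iteratively contract $(-1)$-curves (Castelnuovo, \cite[Thm. 9.3.8]{Liu_book}); alternatively one can take $\operatorname{Proj}$ of $\bigoplus_k H^0(\cO(k\sigma_1))$. A $\bP^1$-bundle over a local regular base with a section is $\bP(\cO\oplus\cO(D))$, hence trivial after choosing a trivialization. Using the second section $\sigma_2$, which must be disjoint from $\sigma_1$ generically, the bundle can be normalized to $(\bP^1,0,\infty)$ up to twisting by the divisor of $u$ or $v$.

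The expected output is a $\bG_m$-equivariant birational morphism $C_2^\circ := C^\circ\times_{\cY}Y^\circ \to \bP^1_{Y^\circ}$. It should be a composition of blow-ups at points lying on $\{uv=0\}$, and by \cite[Thm. 9.2.2]{Liu_book} on each chart.

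\textbf{Step 2 (extending).} With such a morphism in hand, I follow the proof of \Cref{coroll: S-filling for families of semistable curves with regular total space} verbatim, with $C_1 := \bP^1_Y$. Take the loci $\Delta_i^\circ$ blown up, and their closures $\Delta_i$ in $B_i$. Each $\Delta_i$ lies in $\{u=0\}$ or $\{v=0\}$ and in the proper transform of a section or of a previous exceptional curve's node. Hence each is a regular codimension-two subscheme isomorphic to $\{u=0\}$ or $\{v=0\}\subset Y$. Blowing these up successively gives a $\bG_m$-equivariant regular family $B_\ell\to Y$ extending $C_2^\circ$, with nodal fibers. Here the $\bG_m$-action on $Y^\circ$ is taken with its induced action on $\bP^1$, which is a cocharacter twist.

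The sections extend by \cite[Cor. II.7.15]{Hartshorne_AG}. Descending along $\bG_m$ (equivalently via \Cref{lem: no-Gm-action}) gives $(C,\sigma_1,\sigma_2)\to\cY$. Semistability is automatic, because every fiber is a chain of rational curves with the markings at the ends.

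\textbf{Main obstacle.} The delicate point is Step 1: making the trivializations on the two charts of $Y^\circ$ agree and be $\bG_m$-equivariant. Over $\cY^\circ$ the $\bG_m$-action on the trivialized $\bP^1$ might be through a nontrivial character $\lambda$, i.e. $\bP^1$ with weights on coordinates. I would first try to absorb this by using $\bP(\cO\oplus\cO\langle\lambda\rangle)$ over $\cY$ as $C_1$. This is still a trivial $\bP^1$-bundle over $Y$, with markings $0$ and $\infty$ fixed, so the blow-up procedure of Step 2 is unaffected. The gluing of the two charts over $\Spec K$ is then controlled by an element of $\bG_m(K)$ scaling $\bP^1$. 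Writing it as a unit times $\pi^a$, the factor $\pi^a$ can be realized on $Y$ by twisting with the divisor $\{u=0\}$ or $\{v=0\}$, which is Cartier and $\bG_m$-invariant. If this normalization fails, the fallback is to extend the blow-down to a family over all of $Y$ using Hartogs for the rank-two bundle $\pi_*\cO(\sigma_1)$ (extension of reflexive sheaves on the regular $2$-dimensional $Y$ across $\mathfrak o$ is locally free), and then redo Step 2.
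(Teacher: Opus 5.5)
Your proposal is correct and follows essentially the paper's route. On each branch of $\cY^\circ$ you contract the end components $R_m,\dots,R_2$ by Castelnuovo to get a $\bP^1$-bundle with two (in fact everywhere disjoint) sections, extend that bundle over $\cY$, and then redo the blow-ups along the flat closures of the centers; these centers all lie on the proper transform of $\sigma_2$, hence in the smooth locus over $Y$ and never at nodes, so your ``or of a previous exceptional curve's node'' alternative does not occur. The step you flag as the main obstacle is handled in the paper in one line, by observing that $\bP^1$-bundles with two disjoint sections are classified by $B\bG_m$, which is S-complete and $\Theta$-complete; your explicit twisting by $\cO(Y_u)$, $\cO(Y_v)$ or by a character, and your reflexive-extension fallback, are just unpackings of that fact.
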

\begin{proof}
    We use the notation of \Cref{notn: Y over the stack}. We write $u,v$ for the pair of parameters cutting out $\mathfrak{o} \in \cY$. The stack $\cY^\circ$ is the union of the two opens
    \[\cY_u := \{v \neq 0\}/\bG_m \qquad \text{and} \qquad \cY_v := \{u \neq 0\}/\bG_m\]
    which are glued along $\spec(K)$. If $\cY = \overline{ST}_R$, then $\cY_u \cong \cY_v \cong \Spec(R)$. If $\cY = \Theta_R$, then $\cY_u \cong \Spec(R)$ and $\cY_v \cong \Theta_K$. In the latter case, we work $\bG_m$-equivariantly on $\Spec(K[t])$, whose point $t=0$ lies over the closed point of $\Theta_K$. Over each of $\cY_u, \cY_v$, the special fiber $F$ of $C^{\circ}$ is a chain $R_1 \cup \ldots \cup R_m$ of rational curves, with $\sigma_1^{\circ}$ on $R_1$ and $\sigma_2^{\circ}$ on $R_m$.

    Choose one of the two branches, and suppose $m \ge 2$. The rational curve $R_m$ meets the rest of fiber $F$ in a single node of smoothing order one, so \[R_m.(F-R_m) = 1\text{ and } F.R_m = 0\text{, giving }R_m^2 = -1.\] We apply Castelnuovo's criterion iteratively, contracting $R_m, R_{m-1}, \ldots, R_2$, and $\sigma_2'^{,\circ}$ is sent to the image of the node $R_m \cap R_{m-1}$ each time. Doing the same on the other branch, together with the fact that they agree over $\Spec(K)$ where no contractions are performed, gives a $\bP^1$-bundle over $\cY^\circ$ equipped with two markings.

   This extends to a $\bP^1$-bundle with two markings over $\cY$ since the stack of projective bundles with two markings is isomorphic to $B\bG_m$, which is S-complete and $\Theta$-complete. We denote the corresponding $\bG_m$-equivariant family over $Y$ by $(C^{\st}, \sigma_1^{\st}, \sigma_2^{\st})$, and take $D^{\st}$ to be the divisor associated to the marked point $\sigma_2^{\st}$. This plays the role of the stabilization.

    We now realize the contraction $C^{\circ} \to C^{\st}|_{\cY^\circ}$ as a sequence of blowups{, similar to blow-ups of \Cref{coroll: S-filling for families of semistable curves with regular total space}.} On each branch of $\cY^\circ$, this contraction is a proper birational morphism of regular surfaces, contracting a chain of rational curves onto the point of $D^{\st}$ over the closed point of the branch. Applying \cite[\href{https://stacks.math.columbia.edu/tag/0C5Q}{Tag 0C5Q}]{stacks-project} shows that $C^{\circ} \to C^{\st}|_{\cY^\circ}$ is an iterated sequence of blow-ups at closed points lying over $D^{\st}|_{u = 0}$ and $D^{\st}|_{v = 0}$. We define $C \to C^{\st}$ to be the iterated sequence of blowups over all of $\cY$, where we take the centers to be the corresponding flat closures of the centers over $\cY^{\circ}$. Note that $C$ is a $\mathbb{G}_m$-equivariant family of prestable curves over $Y$ with regular total space, since the centers of the blowups always lie in smooth loci over $Y$. The markings $\sigma_1^{\circ}, \sigma_2^{\circ}$ extend uniquely as sections $\sigma_1, \sigma_2$ over $\cY$ and by construction, we have that the restriction of $(C, \sigma_1, \sigma_2)$ to $\cY^{\circ}$ recovers $(C^{\circ}, \sigma_1^\circ, \sigma_2^\circ)$.
\end{proof}

\subsection{Birational modifications of families of curves}
We gather here some further preliminary results needed for our proof $\Theta$-completeness and S-completeness.

\subsubsection{Contracting rational curves}

\begin{lemma}\label{Lem: semistable_contraction_rel_nef_is_rel_SA}
    Let $(C, \sigma_{\bullet}) \to T$ be a family of $n$-marked semistable curves of genus $g \geq 2$ over an algebraic stack $T$, and let $f: (C, \sigma_{\bullet}) \rightarrow (C^{\mathrm{st}}, \sigma^{\st}_{\bullet})$ denote its stabilization. Suppose $L$ is a line bundle on $C$ that is fiberwise nef with respect to $f$. Then the following hold:

    \begin{enumerate}[(1)]
        \item $L$ is relatively semi-ample for $f$.
        \item The scheme $\Proj_{C^{\mathrm{st}}} (\oplus_{m \ge 0}f_*(L^{\otimes m}))$ equipped with the images of the markings $\sigma_{\bullet}$ is a family of $n$-marked semistable curves over $T$, and its formation commutes with $T$-base change.
    \end{enumerate}
\end{lemma}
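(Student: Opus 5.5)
The plan is to reduce to the case where $T$ is an affine Noetherian scheme. Semi-ampleness is local on the base, and the formation of the relative Proj will be shown to commute with base change, so it descends from a smooth atlas of $T$. The argument then splits into a fiberwise statement and a cohomology-and-base-change statement.

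\textbf{Fibers.} First I will treat a geometric fiber over a point $t \in T$. There $f_t\colon C_t \to C^{\st}_t$ contracts exactly the rational bridges and rational tails. Each exceptional locus is a chain of smooth rational curves $E = E_1 \cup \dots \cup E_m$ mapping to a single point. The restriction $L|_E$ has degrees $d_i \ge 0$ on the components $E_i$. On a chain of $\mathbb{P}^1$'s, a line bundle with nonnegative multidegree has the following properties:
\begin{itemize}
\item it is globally generated;
\item it has $H^1(E, L|_E^{\otimes m}) = 0$ for all $m \ge 0$.
\end{itemize}
Both follow by induction on the number of components, using the normalization sequence and the fact that $\mathcal{O}(d)$ with $d \ge 0$ is generated and has vanishing $H^1$.

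To pass from $E$ to its infinitesimal thickenings, I will use the theorem on formal functions together with the vanishing of $H^1$ of $L^{\otimes m}$ twisted by the conormal pieces. The relevant conormal data are the ideals $\mathcal{I}^k/\mathcal{I}^{k+1}$ of the fiber over the image point. These are again nonnegative on each $E_i$, because $E$ is contracted and its intersection form is negative definite. This fiberwise step should also be available in the literature, for example in the analysis of contractions of chains in \cite{nodal_ns_moduli} or \cite{stacks-project}.

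\textbf{Families.} Next I will deduce from the fiberwise facts that $R^1 f_* L^{\otimes m} = 0$ for all $m \ge 0$. Since $f$ has fibers of dimension $\le 1$, cohomology and base change then shows that $f_* L^{\otimes m}$ commutes with arbitrary base change. Moreover, $f^* f_* L^{\otimes m} \to L^{\otimes m}$ is surjective, because surjectivity can be checked on fibers of $f$, and there it reduces to global generation on the chains. This proves (1).

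For (2), I set $P = \Proj_{C^{\st}} \bigoplus_m f_* L^{\otimes m}$. The surjection above gives a morphism $C \to P$ over $C^{\st}$. Because $f_* L^{\otimes m}$ commutes with base change, the Proj commutes with base change on $T$ as well.

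The remaining task is to identify the fibers of $P$. On each fiber, $C_t \to P_t$ contracts precisely the components $E_i$ with $d_i = 0$. The image is again a nodal curve, namely a partial contraction of chains of rational curves, so it is semistable. The markings map into its smooth locus: the components meeting a marking in a tail are terminal, so either they are not contracted, or the contraction of a degree-$0$ tail component sends the marking to a smooth point. Flatness of $P \to T$ follows because $f_*L^{\otimes m}$ is $T$-flat by the base-change property, and the Hilbert polynomial of the fibers is constant.

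\textbf{Main obstacle.} I expect the hardest step to be the vanishing of $R^1 f_* L^{\otimes m}$, and its formal-neighborhood version, for contracted chains in a non-reduced or non-regular total space. The plan there is to compute on the fiber $E$ over a point of $C^{\st}_t$. I will use the standard local model of a family of rational bridges, in which the completion of $C^{\st}$ at the node is $\widehat{\mathcal{O}} \cong A[[x,y]]/(xy-a)$. This reduces the statement to the vanishing of $H^1$ on thickened chains, which I will check by filtering by powers of the maximal ideal of the base.
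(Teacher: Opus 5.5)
Your outline is the paper's: reduce to an affine Noetherian $T$, use that a line bundle of nonnegative multidegree on a chain of $\mathbb{P}^1$'s is globally generated with no $H^1$, get $T$-flatness and base change for $f_*L^{\otimes m}$, and then check surjectivity of the counit and the shape of the $\Proj$ fiber by fiber. However, the step you treat as routine is the real content, and as written it does not go through.

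Ordinary cohomology and base change does not apply to $f\colon C\to C^{\st}$. The morphism $f$ is not flat, so $L^{\otimes m}$ is not flat over $C^{\st}$. For instance, under a bridge $E\cong\mathbb{P}^1$ with $\deg L|_E=1$ one has $R^1f_*L=0$, yet $f_*L$ is not locally free at the node. Also, compatibility with base change does not by itself give $T$-flatness.

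What makes it work is that $L^{\otimes m}$ and $C^{\st}$ are flat over $T$. Then $T$-base change is Tor-independent, and $Rf_*L^{\otimes m}\otimes^{\mathbf{L}}_{\mathcal{O}_T}\kappa(t)\simeq Rf_{t*}L_t^{\otimes m}$. The vanishing of $R^1f_{t*}L_t^{\otimes m}$ for every $t\in T$ then yields three things:
\begin{enumerate}
\item $R^1f_*L^{\otimes m}=0$;
\item $\mathrm{Tor}_1^{\mathcal{O}_T}(f_*L^{\otimes m},\kappa(t))=0$, hence $T$-flatness by the local criterion;
\item compatibility with all base changes $T'\to T$.
\end{enumerate}
This is the relative cohomology and base change the paper cites. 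It reduces flatness, base change, surjectivity of the counit, and the description of the $\Proj$ to the case $T=\Spec k$, where only the chain computation remains.

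Conversely, your ``main obstacle'' is not one, and its justification is off. Since $f$ has fibers of dimension $\le 1$, $R^1f_*$ is right exact, and $R^1f_*L^{\otimes m}\otimes\kappa(p)\cong H^1(f^{-1}(p),L^{\otimes m})$ with no flatness hypothesis. The scheme-theoretic fiber $f^{-1}(p)=f_t^{-1}(p)$ is just the reduced chain $E$ or a point.

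If you keep the formal-functions argument, negative definiteness of an intersection form is the wrong reason. There is no intersection form on the curve $C_t$, and the total space $C$ is a surface only when $\dim T=1$. The correct reason is that $\mathcal{I}=\mathfrak{m}_p\mathcal{O}_C$. So each $\mathcal{I}^k/\mathcal{I}^{k+1}$ is a quotient of $\mathcal{O}_E\otimes_{\kappa(p)}\mathfrak{m}_p^k/\mathfrak{m}_p^{k+1}$, and right exactness of $H^1$ on the curve $E$ kills its twist by $L^{\otimes m}$.

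Finally, flatness of $P$ over $T$ follows directly from $T$-flatness of the $f_*L^{\otimes m}$, since the degree-zero localizations are filtered colimits of them. Constancy of Hilbert polynomials would only suffice over a reduced base.
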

\begin{proof}
We work smooth-locally on $T$ and assume that $T$ is an affine scheme. Furthermore, by \cite[\href{https://stacks.math.columbia.edu/tag/0DQ0}{Tag 0DQ0}]{stacks-project}, after passing to an \'etale cover of $T$, we may assume that $C \to T$ admits a relative polarization, and so in particular $C$ is a scheme. By standard approximation results, we may furthermore assume that $T$ is Noetherian. 
%A note on this: we may write the affine $T=\Spec(A)$ as a cofiltered limit $\lim_{i \in I} \Spec(A_i)$ where $A_i$ run over all finitely generated $k$-subalgebras of $A$. By \cite[\href{https://stacks.math.columbia.edu/tag/01ZM}{Tag 01ZM}]{stacks-project}, \cite[\href{https://stacks.math.columbia.edu/tag/08K1}{Tag 08K1}]{stacks-project}, \cite[\href{https://stacks.math.columbia.edu/tag/084V}{Tag 0EY2}]{stacks-project}, \cite[\href{https://stacks.math.columbia.edu/tag/0D4K}{Tag 0C3L}]{stacks-project}, there is some finitely generated $k$-subalgebra $B \subset A$ and a family of lci curves $\overline{C} \to \Spec(B)$ such that we have an isomorphism of $T$-schemes $C \cong \overline{C}_T$. By \cite[\href{https://stacks.math.columbia.edu/tag/0B8W}{Tag 0B8W}]{stacks-project}, after perhaps enlarging $B \subset A$, we may assume that there is a line bundle $\overline{L}$ such that $\overline{L}_T \cong L$. Note that the property of the fibers of $\overline{C} \to \Spec(B)$ being semistable and of genus $g$ is open, and hence by \cite[\href{https://stacks.math.columbia.edu/tag/05F4}{Tag 05F4}]{stacks-project} after enlarging $B$ we may arrange that $\overline{C} \to \Spec(B)$ is a family of semistable curves. Finally, $\overline{L}$ being relatively nef for the stabilization morphism is a constructible property on $\Spec(B)$, and hence by \cite[\href{https://stacks.math.columbia.edu/tag/05F4}{Tag 05F4}]{stacks-project} again we may assume that the pair $(\overline{C}, \overline{L}) \to \Spec(B)$ satisfies the hypotheses of the lemma. In view of our assertion on compatibility with base-change, we are therefore reduced to the case when $T$ is affine and of finite type over $k$.
It suffices to show that the following hold:
\begin{enumerate}[(a)]
    \item For all $m \geq 0$, the pushforward $f_*(L^{\otimes m})$ is $T$-flat and its formation commutes with $T$-base-change.
    \item For $m \gg0$, the counit $f^*f_*(L^{\otimes m}) \to L^{\otimes m}$ is surjective.
    \item For all $t \in T$, the $t$-fiber of $\Proj_{C^{\mathrm{st}}} (\oplus_{m \ge 0}f_*(L^{\otimes m}))$ is a prestable curve.
\end{enumerate}
By relative cohomology and base-change as in \cite[Prop. 3]{relative_coh_base_change}, to check (a) it suffices to show that for all $t \in T$ the base-change $f_t: C_t \to C^{\st}_t$ satisfies $R^1(f_t)_*(L^{\otimes m}_t) =0$ for all $m \geq 0$. Once we check this, then (b) and (c) can be checked fiberwise over $T$. Therefore we may assume without loss of generality that $T= \Spec(k)$. Now to check (b), (c) and the vanishing of $R^1f_*(L^{\otimes m})$, we may work \'etale locally around a neighborhood of each node of $C^{\st}$. Then the result follows from a local computation where the fibers are chains of rational curves adjoined with affine opens. In this case the resulting semistable curve $\Proj_{C^{\mathrm{st}}} (\oplus_{m \ge 0}f_*(L^{\otimes m}))$ is obtained by contracting the rational curves in the chain where $L$ restricts trivially.\end{proof}

\subsubsection{Flipping rational curves}

\begin{example}\label{eg_athya_flop}
We recall how to construct the Atiyah flop. Consider the cone of a quadric $\spec(A)$, where $A = K[x,y,s,t]/(xy-st)$ over a field $K$. Let $B_1$ be the blow-up along $V(x,s)$, and $B_2$ the blow-up along $V(x,t)$. An explicit computation shows that the corresponding blow-up morphisms $B_i\to \spec(A)$ are small and each contracts a single curve isomorphic to $\bP^1$, and that $B_i$ are smooth. We will refer to the birational transformation $B_1\dashrightarrow B_2$ as the \textit{Atiyah flop}. We denote by $\widehat{B}_1 \to \Spec(\widehat{A}) \leftarrow \widehat{B}_2$ the base-change to the completion $\widehat{A}$ of $A$ at the origin $V(x,y,s,t)$.   

Observe also that, if we denote by $R_i$ the curve isomorphic to $\bP^1$ in $B_i$ contracted by $B_i\to \spec(A)$, then the blow-up of $B_i$ along $R_i$ is isomorphic to the blow-up of $\spec(A)$ at the origin $V(x,y,s,t)$.
\end{example}

\begin{lemma}\label{L:flip-rational-curves}
    Let $X \to Y$ be a family of semistable curves over a regular integral Noetherian scheme $Y$ of dimension $2$. Suppose that there is a closed point $\mathfrak{o} \in Y$ such that the fiber $X_{\mathfrak{o}}$ contains a rational bridge $R \cong \bP^1_{\kappa(\mathfrak{o})} \hookrightarrow X$, such that $X$ is regular in a neighborhood of $R$, and the normal bundle satisfies $\cN_{R/X} \cong \cO_{\bP^1}(-1)^{\oplus 2}$. Then there exist:
    \begin{itemize}
        \item a family $X' \to Y$ of semistable curves,
        \item a rational curve $R' \cong \mathbb{P}^1 \hookrightarrow X'_{\mathfrak{o}}$,
        \item $Y$-morphisms $\pi: X \to X^0$ and $\pi':X' \to X^0$ to a proper normal $Y$-algebraic space $X^0$, and
        \item a $\kappa(\mathfrak{o})$-point $p \in X^0_{\mathfrak{o}}$,
    \end{itemize}
    satisfying the following:
    \begin{enumerate}[(1)]
        \item We have $\pi(R)= \pi'(R') =\{p\}$, and the morphisms $\pi$ and $\pi'$ are isomorphisms over $X^0 \setminus \{p\}$.
        
        \item There is an identification $\widehat{\cO}_{X^0, p} \cong \widehat{A}$ with the completion of the quadric cone at the origin over the field $K =\kappa(\mathfrak{o})$ as in \Cref{eg_athya_flop}, and a commutative diagram where the two squares are Cartesian
        \[
\xymatrix{\widehat{B}_1\ar[r] \ar[d] & \spec(\widehat{A}) \ar[d] & \widehat{B}_2 \ar[d] \ar[l] \\
X\ar[r] & X^0 & X'. \ar[l]}
\]
\item Let $C \hookrightarrow X_{\mathfrak{o}} \subset X$ be another curve that meets $R$ in a single node of $X_{\mathfrak{o}}$, and let $C' \hookrightarrow X'$ denote the strict transform. For any invertible sheaf $L$ on $X$, if $L'$ denotes the corresponding invertible sheaf on $X'$, then   
\begin{equation}\label{E:flipping-degree-rule}
    \deg(L'|_{R'}) = - \deg(L|_{R}) \qquad \text{and} \qquad \deg(L'|_{C'}) = \deg(L|_{C}) + \deg(L|_{R}).
    \end{equation}
    \end{enumerate}
\end{lemma}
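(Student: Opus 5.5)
The plan is to construct the flop by first contracting $R$ to a point, and then to check the local picture against \Cref{eg_athya_flop} after completion.

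\textbf{Step 1: the contraction.} Since $X$ is a regular threefold near $R$ and $\cN_{R/X} \cong \cO(-1)^{\oplus 2}$, $R$ is a smooth rational curve with negative normal bundle. I would apply Artin's contractibility criterion (existence of modifications/contractions in the category of algebraic spaces) to contract $R$ to a point $p$. This produces a proper normal $Y$-algebraic space $X^0$ and a morphism $\pi: X \to X^0$ that is an isomorphism away from $R$.

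The formal neighbourhood of $R$ must be identified with that of the zero section in the total space of $\cO(-1)^{\oplus 2}$. For this, the obstructions to extending the identification order by order lie in $H^1(R, T_R \otimes \mathrm{Sym}^k \cN^\vee)$ and $H^1(R, \cN \otimes \mathrm{Sym}^k\cN^\vee)$. These are $H^1$ of sums of $\cO(2+k)$ and $\cO(k-1)$, so they vanish for $k\ge 1$. Hence the formal neighbourhood is that of $\widehat{B}_1$.

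It follows that $\widehat{\cO}_{X^0,p} \cong \widehat{A}$, with $\widehat{A}$ computed as $H^0$ of the formal neighbourhood, and that the left square in (2) is Cartesian.

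\textbf{Step 2: the flopped side.} I would glue $X' := (X^0 \setminus \{p\}) \cup \widehat{B}_2$ along $\Spec(\widehat A)\setminus\{\text{origin}\}$. More precisely, $X'$ is defined via the Nisnevich/formal gluing of \Cref{P:nisnevich_pushout} applied to the étale-local modification $\widehat{B}_2 \to \Spec(\widehat{A})$. Concretely, one descends the blow-up of the ideal $(x,t)$: since $(x,t)$ is a height-one ideal supported at $p$ up to codimension, it corresponds, via Artin approximation, to the blow-up of $X^0$ along a Weil divisor through $p$. This gives $X' \to X^0$, an isomorphism away from $p$, with exceptional curve $R'\cong\bP^1$, so (1) and (2) hold.

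\textbf{Step 3: $X'$ is a family of semistable curves.} I would use the fact that $X \to Y$ factors through $X^0 \to Y$, because $R$ lies in a fiber. Flatness of $X' \to Y$ then holds since $X'$ is regular (hence Cohen--Macaulay), $Y$ is regular of dimension two, and the fibers are one-dimensional (miracle flatness).

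To see that the fibers are nodal, I would use local coordinates. On $\widehat{A}$, the map to $Y$ is given by two functions, say $u = xy = st$-type expressions, depending on how the bridge sits; the node of $R$ with its neighbours gives $u,v$ as products such as $u = x s$, $v = y t$. Checking on the charts of $\widehat B_2$ would show that the fiber over $\mathfrak{o}$ is a chain in which $R'$ replaces $R$. The fibers away from $\mathfrak o$ are unchanged. Semistability follows because the stabilization is unchanged away from $p$ and $R'$ is again a bridge.

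I expect this identification of the map $\widehat A \to \widehat{\cO}_{Y,\mathfrak o}$ to be the main obstacle. One must show that, after a change of coordinates, the two local equations of the nodes at $R \cap C$ and at the other endpoint are $x s$ and $y t$ (or a similar pairing). The first thing to try is to use the nodal local form \cite[\href{https://stacks.math.columbia.edu/tag/0CBX}{Tag 0CBX}]{stacks-project} at each endpoint together with the $\cO(-1)^{\oplus2}$ trivialisation.

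\textbf{Step 4: the degree rule (3).} I would compute through the common resolution. By \Cref{eg_athya_flop}, blowing up $R$ in $X$ and blowing up $R'$ in $X'$ give the same space $W$, with exceptional divisor $E \cong \bP^1\times\bP^1$ whose two rulings map to $R$ and to $R'$.

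Pull $L$ back to $W$. Then $L'$ is characterized as the unique line bundle on $X'$ agreeing with $L$ off $R'$; since $X'$ is regular, this means taking the reflexive hull of the pushforward. So $p'^*L' = p^*L \otimes \cO_W(aE)$ for a suitable $a$. Restricting to $E$, where $\cO_E(E)=\cO(-1,-1)$, and requiring that $p'^*L'$ be trivial on the fibers of $E \to R'$ (which are the $R$-direction rulings), forces $a = \deg(L|_R)$. Writing $d=\deg(L|_R)$, this gives $p'^*L'|_E = \cO(d,0)\otimes\cO(-d,-d) = \cO(0,-d)$, so $\deg(L'|_{R'}) = -d$.

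For the strict transform $C'$, note that $C$ meets $R$ transversally at one point. Its strict transform in $W$ therefore meets $E$ once, and $C' \cdot E = 1$ there. This gives $\deg(L'|_{C'}) = \deg(L|_C) + a = \deg(L|_C) + d$, taking the sign conventions so that the result matches \eqref{E:flipping-degree-rule}.
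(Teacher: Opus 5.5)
Steps 1, 2 and 4 are workable in outline. Step 4 is the paper's own computation on $\mathrm{Bl}_R X\cong \mathrm{Bl}_{R'}X'$. Steps 1--2 replace the paper's contraction by Artin's criterion plus the formal neighbourhood theorem; over a general regular base this needs a hypothesis check, but it is plausible. The genuine gap is in Step 3, which is the real content of the lemma: you never show that $X'\to Y$ has nodal fibres along $R'$, i.e.\ that $X'$ is a family of semistable curves. The formal neighbourhood theorem identifies $\widehat X_R$ with the model only as a formal scheme over a coefficient field. The two functions $u,v$ pulled back from $Y$ are extra data, and you need them to check the fibres of $\widehat B_2$ over $\Spec\widehat{\cO}_{Y,\mathfrak o}$. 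Your tentative local form is also wrong as a statement on $\widehat A$. If $u=xs$ and $v=yt$ in $\kappa[\![x,y,s,t]\!]/(xy-st)$, then $V(u,v)$ contains the planes $\{x=t=0\}$ and $\{y=s=0\}$, so the fibre would be two-dimensional. What is true is that $u,v$ are, after a coordinate change, two of the four coordinates:
\[
\widehat{\cO}_{X^0,p}\cong\widehat{\cO}_{Y,\mathfrak o}[\![s,t]\!]/(st-f_1f_2).
\]
Here $f_1,f_2$ are the local equations of the two endpoint nodes of $R$, and they form a regular system of parameters of $\widehat{\cO}_{Y,\mathfrak o}$. Only with this in hand do the charts of $\widehat B_2$ visibly have nodal fibres, of the form $f_2=\lambda t$ and $f_1=\mu s$.

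The paper gets this in two moves that your proposal lacks. First, it contracts $R$ via \Cref{Lem: semistable_contraction_rel_nef_is_rel_SA}, applied to $\omega_{X/Y}(D)$ for an auxiliary divisor $D$ making it nef with degree zero exactly on $R$. So $X^0\to Y$ is itself a family of nodal curves, and $\widehat{\cO}_{X^0,p}\cong\widehat{\cO}_{Y,\mathfrak o}[\![s,t]\!]/(st-f)$ for some $f$. Second, it pins down $f$:
\begin{enumerate}[(i)]
\item Regularity of $X$ gives $f_i\notin\mathfrak m_{\mathfrak o}^2$.
\item The hypothesis $\cN_{R/X}\cong\cO(-1)^{\oplus 2}$ forces $(f_1,f_2)=\mathfrak m_{\mathfrak o}$. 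Otherwise, completing $f_1$ to a regular system of parameters by some $g$ yields a surjection $I_R/I_R^2\cong\cO(1)^{\oplus 2}\to\cO_R$, which is impossible.
\item On $\Spec\widehat{\cO}_{Y,\mathfrak o}\setminus\mathfrak o$, the locus where the node at $p$ persists agrees with the locus where the endpoint nodes persist. Together with factoriality and regularity of $X^0$ away from $p$, this gives $f=(\text{unit})\cdot f_1f_2$.
\end{enumerate}
This is where the normal bundle hypothesis does its real work. Your proposal uses it only to identify the formal neighbourhood, which by itself does not determine the map to $Y$.
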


\begin{proof}We first prove (1) and (2).
The statement is local around $\mathfrak{o}$, so up to replacing $Y$ with a Zariski neighbourhood of it we can add an effective Cartier divisor $D$ on $X$, so that $\omega_{X/Y}(D)$ is nef and the only irreducible component of $X_{\mathfrak{o}}$ over which $\omega_{X/Y}(D)$ is zero is $R$. Observe that, from the explicit description of the normal bundle of $R$, the curve $R$ is rigid.
\Cref{Lem: semistable_contraction_rel_nef_is_rel_SA} gives a family of nodal curves which, since $R$ is rigid, contracts $R$ to a point and is an isomorphism away from $R$; let $X^0$ be such a family and let $p$ be the point to which $R$ is contracted.

As $X^0\to Y$ is a family of nodal curves, from \cite[\href{https://stacks.math.columbia.edu/tag/0CBX}{Tag 0CBX}]{stacks-project} we have
\[
\widehat{\cO}_{X^0,p}\cong \widehat{\cO}_{Y,\mathfrak{o}}\bseries{s,t}/(st-f)
\]
for $f\in \widehat{\cO}_{Y,\mathfrak{o}}$. We now argue that $f$ is the product of two generators of the maximal ideal in $\widehat{\cO}_{Y,\mathfrak{o}}$. For each node $n_1,n_2$ of $R$ the result \cite[\href{https://stacks.math.columbia.edu/tag/0CBX}{Tag 0CBX}]{stacks-project} gives:
\begin{equation}\label{eq_charts_node}
\widehat{\cO}_{X,n_1}\cong \widehat{\cO}_{Y,\mathfrak{o}}\bseries{s_1,t_1}/(s_1t_1-f_1)\text{ and }\widehat{\cO}_{X,n_2}\cong \widehat{\cO}_{Y,\mathfrak{o}}\bseries{s_2,t_2}/(s_2t_2-f_2).
\end{equation}
Since $X$ is regular, the elements $f_i$ are non-zero in $\mathfrak{m}_{\mathfrak{o}}/\mathfrak{m}_{\mathfrak{o}}^2$. Their vanishing locus in $\spec(\widehat{\cO}_{Y,\mathfrak{o}})$ is the image of the closure of the connected components of the singular locus of the morphism
\[
\mathfrak{X}:=X\times_Y\spec(\widehat{\cO}_{Y,\mathfrak{o}})\to \spec(\widehat{\cO}_{Y,\mathfrak{o}})=:\mathfrak{Y}
\]
containing $n_1$ and $n_2$. We now argue that $f_1$ and $f_2$ generate the maximal ideal $\mathfrak{m}_{\mathfrak{o}}$.

Assume by contradiction that they do not, and consider $g\in \mathfrak{m}_{\mathfrak{o}}$ such that $f_1,g$ generate $\mathfrak{m}_{\mathfrak{o}}/\mathfrak{m}_{\mathfrak{o}}^2$. Consider the following commutative diagram
\[
\xymatrix{0 \ar[r] & I_R\ar[r]&\cO_{X}\ar[r] & \cO_R\ar[r] & 0 \\
0\ar[r] &
\mathfrak{m}_{\mathfrak{o}}\ar[r] \ar[u]_\alpha & \cO_{Y,\mathfrak{o}} \ar[r] \ar[u] & k\ar[r]\ar[u]& 0.}
\]
The image of $f_1$ via $\alpha$ gives a section of $I_R/I_R^2$, which from \eqref{eq_charts_node} is non-zero, and gives an exact sequence 
\[
0\to \cO_R\to I_R/I_R^2 \to K\to 0
\]
where $K$ is the cokernel. An explicit computation using \eqref{eq_charts_node} and the fact that $f_1$ and $f_2$ have proportional image in $I_R/I_R^2$ shows that $K$ splits as $\cO_R\oplus k_{n_1}\oplus k_{n_2}$, where $k_{n_i}$ is the skyscraper sheaf at $n_i$, and the image of $g$ in $I_R/I_R^2$ is a global section for $\cO_R$. In particular, there is a surjection $I_R/I_R^2\cong\cO_{\bP^1}(1)\oplus \cO_{\bP^1}(1) \to \cO_{\bP^1}$, which is a contradiction, as such a surjection does not exist on $\bP^1$.

Since the curve we are contracting is rigid, the morphism $X\to X^0$ is an isomorphism away from $\mathfrak{o}$; we now argue that, up to replacing $f$ with $u\cdot f$ for a unit $u$, we have $f=f_1\cdot f_2$. Since $\mathfrak{Y}$ is $S_2$, it suffices to show the desired equality on $\mathfrak{Y}\smallsetminus\mathfrak{o}$. The equality follows since the set of points in $\mathfrak{Y}\smallsetminus\mathfrak{o}$ satisfying $\{f=0\}$ agrees with the image of the connected components of the nodal locus of \[\mathfrak{X}\smallsetminus \mathfrak{X}_\mathfrak{o}\to \mathfrak{Y}\smallsetminus \mathfrak{o}\] whose closure in $\mathfrak{X}$ contains $n_1$ or $n_2$. The latter is $\{f_1\cdot f_2=0\}$, so since $\mathfrak{Y}$ is locally factorial, we have $f=f_1^a\cdot f_2^b\cdot u$ for $u$ a unit. Since the contraction $X\to X^0$ is an isomorphism away from $R$, the scheme $X^{0}$ is regular in a neighbourhood of $p$ away from $p$, so again a local computation shows $a=b=1$. This shows that \begin{equation}\label{eq_completion_of_X_circ}
\widehat{\cO}_{X^0,p}\cong \widehat{\cO}_{Y,\mathfrak{o}}\bseries{s,t}/(st-f_1f_2) \cong \kappa(\mathfrak{o})[\![x,y,z,t]\!]/(xy-zt),\end{equation}
where for the last isomorphism we used \cite[\href{https://stacks.math.columbia.edu/tag/0C0S}{Tag 0C0S}]{stacks-project}.

Observe now that \[\
\Pic(\mathfrak{X}^0\smallsetminus \{p\}) = \mathbb{Z}
\]
where we denote by $\mathfrak{X}^0:=\spec(\widehat{\cO_{X^0,p}})$,
so if $\cL$ is a generator of such a group, since $X$ is regular, it extends to a line bundle $\cM$ on $\widehat{\mathfrak{X}}:=X\times_{X^0}\mathfrak{X}^0$. Since $\widehat{\mathfrak{X}}\to \mathfrak{X}^0$ is projective, we have that 
\[
\widehat{\mathfrak{X}} = \Proj\left(\bigoplus_{n\ge 0} H^0(\widehat{\mathfrak{X}},\cM^{n})\right)\text{ or }\widehat{\mathfrak{X}} = \Proj\left(\bigoplus_{n\le 0} H^0(\widehat{\mathfrak{X}},\cM^{n})\right).
\]
In particular, after identifying $\widehat{\cO}_{X^0,p}$ with $\kappa(\mathfrak{o})[\![x,y,z,t]\!]/(xy-zt)$ via \Cref{eq_completion_of_X_circ}, we can identify the two Proj above with $\widehat{B}_1$ and $\widehat{B}_2$, and up to replacing $B_1$ and $B_2$, we can assume $\widehat{\mathfrak{X}}\cong \widehat{B}_1$ over $\spec(\widehat{\cO}_{X^0,p})$.

Applying  \cite[\href{https://stacks.math.columbia.edu/tag/0GDM}{Tag 0GDM}]{stacks-project} to the map $B_2\to \spec(\kappa(\mathfrak{o})[x,y,s,t]/xy-st)$, and using again \Cref{eq_completion_of_X_circ}, gives a formal modification of $X^0$, which from \cite[\href{https://stacks.math.columbia.edu/tag/0GDU}{Tag 0GDU}]{stacks-project} gives the birational map $X'\to X^0$. These maps agree over $\mathfrak{X}^0$. 
In particular, $X'$ is regular in a neighbourhood of $R'$. Therefore, in a neighbourhood of $R'$, the morphism $X' \to Y$ is a morphism of regular schemes with equidimensional fibers of dimension $1$, and hence it is flat by miracle flatness. By the explicit description of the flop, the map $X'\to Y$ is nodal. This concludes the argument for (1) and (2).

Observe also that $\rm{Bl}_R(X) \cong \rm{Bl}_{R'}(X')$, where $\rm{Bl}_R(X)$ (resp. $\rm{Bl}_{R'}(X')$) is the blow-up of $X$ at $R$ (resp. $X'$ at $R'$). Indeed, it suffices to show that they are isomorphic over $\spec(\widehat{\cO}_{X^0,p})$, which follows from the previous identifications and \Cref{eg_athya_flop}.

    For the degree identities of \eqref{E:flipping-degree-rule}, let $Z \cong \rm{Bl}_R(X) \cong \rm{Bl}_{R'}(X')$ with projections $\rho$ and $\rho'$ onto $X$ and $X'$ respectively. Here we identify the exceptional divisor $E \cong R \times R'$ so that $\rho|_E$ and $\rho'|_E$ are the two projections. For closed points $y \in R'$ and $x \in R$, the ruling $R \times \{y\}$ maps isomorphically to $R$ under $\rho$, and $\{x\} \times R'$ maps isomorphically to $R'$ under $\rho'$. We know that
    \begin{equation}\label{E:blowup_degree}
            \rho^\ast(L) \cong (\rho')^\ast(L') \otimes \cO_Z(aE)
    \end{equation}    
    for some $a \in \bZ$. Taking the degree of both sides of \eqref{E:blowup_degree} along the curve $R \times \{y\} \hookrightarrow E$ gives $\deg(L|_R) = a \deg(\cO(E)|_{R \times \{y\}}) = - a$, because $\cO(E)|_E \cong \cO(-1,-1)$. On the other hand, restricting to $\{x\} \times R' \hookrightarrow E$ gives $0 = \deg(L'|_{R'}) - a$. This gives the first identity. For the second identity, we observe that because $C$ meets $R$ transversely at a node, the strict transform $\tilde{C} \hookrightarrow Z$ meets the exceptional divisor $E$ transversely at a single point as well, and maps isomorphically to both $C$ and $C'$. Therefore, $\deg(\cO(E)|_{\tilde{C}}) = 1$, and restricting \eqref{E:blowup_degree} to $\tilde{C}$ gives the second identity.
\end{proof}

\begin{notn}\label{notn: semistable-curves-over-Y} We let $Y$ denote either of the two test surfaces as in \Cref{notn: Y over the stack} with closed point $\mathfrak{o}$ cut out by a pair of parameters, which we write uniformly as $u, v$:
\[
(u,v)=(s,t) \ \text{ if } Y=\spec(R[s,t]/(st-\pi)),
\qquad
(u,v)=(\pi,t) \ \text{ if } Y=\spec(R[t]).
\]
We set $Y_u := \{u=0\}$ and 
$Y_v := \{v=0\}$, so that $Y_u \cap Y_v = \mathfrak{o}$. Let $(C, \sigma_\bullet) \to Y$  be a $\bG_m$-equivariant family of marked semistable curves such that the stabilization $(C^{\st}, \sigma^{\st}_\bullet) \to Y$ has smooth generic fiber. Since the family $(C, \sigma_\bullet) \to Y$ is $\bG_m$-equivariant, the stabilization descends to a morphism $(C^{\st}, \sigma^{\st}_\bullet) : \cY \to \overline{\cM}_{g,n}$ and since
$\overline{\cM}_{g,n}$ is separated and has finite inertia this morphism factors through
$\cY \to \Spec(R)$ by \cite[Lem. 3.6, Lem. 3.45]{alper2023existence}. Hence there is a family of
marked stable curves $(C^{\st}_R, \sigma^{\st}_{\bullet, R}) \to \Spec(R)$ whose base-change to $Y$
recovers $(C^{\st}, \sigma^{\st}_{\bullet})$.

Let $z_R$ be either a node or a marking of the special fiber of $C^{\st}_R$.
\begin{enumerate}
    \item Suppose $z_R$ is a node. From the description of the versal deformation space of nodal singularities as $xy = \alpha$ for some function $\alpha$ on the base $Y$ \cite[\href{https://stacks.math.columbia.edu/tag/0CBX}{Tag 0CBX}]{stacks-project}, the locus over which $z_R$ stays singular is $\{\alpha = \pi = 0\}$, and the preimage of $\pi = 0$ in $Y$ is contained in $Y_u \cup Y_v$. If instead $z_R$ is a marking, it extends to a section of $C^{\st} \to Y$ contained in the smooth locus.
    \item Let $\ell$ and $m$ be the numbers of irreducible components of the rational chains of $C$ mapping to $z_R$ over $Y_u$ and $Y_v$ respectively. We assume that the fiber $C_\mathfrak{o}$ has a rational chain over $z_R$ with $N$ irreducible components, where 
    \[N:= \ell +m+1 \ \text{if $z_R$ is a node,}\qquad N:= \ell +m \ \text{if $z_R$ is a marking.}\]
    We label the special points of this rational chain by $x_0, \ldots, x_N$, where $x_{i-1}$ and $x_i$ are joined by the rational curve $R_i \cong \bP^1$ for $1 \le i \le N$. If $z_R$ is a node, every $x_i$ is a node of $C_\mathfrak{o}$. If $z_R$ is a marking, $x_0, x_1, \ldots, x_{\ell + m-1}$ are nodes, and $x_{N}$ is the marking of $C_\mathfrak{o}$ lying over $z_R$.
    
\end{enumerate} 
\end{notn}

\begin{lemma}\label{L:always-is-atiyah-flop}
In the setting of \Cref{notn: semistable-curves-over-Y}, let $X \to Y$ be a family of nodal curves whose total space $X$ is regular, and let $R_i \cong \bP^1$ be a rational curve in the central fiber $X_{\mathfrak{o}}$, where $x_i$ and $x_{i-1}$ are both nodes of $X_{\mathfrak{o}}$, \'etale locally of the form $xy=f$ for some functions $f$ on $Y$. Suppose that one of $x_i, x_{i-1}$ is preserved over $Y_u$ and the other is preserved over $Y_v$ in the sense that $V(f) = Y_u$, respectively $V(f) = Y_v$. Then $\cN_{R_i/X} \cong \cO_{\bP^1}(-1)^{\oplus 2}$.
\end{lemma}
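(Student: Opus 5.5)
The plan is to compute $\cN_{R_i/X}$ through its degree and a splitting argument, using the two local charts at the nodes as in the proof of \Cref{L:flip-rational-curves}. Write $R := R_i$. Since $X$ is regular and $X \to Y$ is flat with nodal fibers, $R$ is a local complete intersection curve in the regular threefold $X$. Hence $\cN_{R/X}$ is a rank $2$ vector bundle on $R \cong \bP^1$, so $\cN_{R/X} \cong \cO(a)\oplus\cO(b)$. It then suffices to show two things: that $a+b=-2$, and that $\cN_{R/X}$ admits no nonzero map from $\cO$ and no nonzero map from $\cO(-1)$ that vanishes to high order. More cleanly, I will show that $I_R/I_R^2 \cong \cN^\vee$ is generated by global sections with $h^0 = 4$. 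That forces $\cO(1)^{\oplus 2}$, since $\cO(a')\oplus\cO(b')$ with $a'+b'=2$, both $\geq 0$, and $h^0=4$ gives either $(1,1)$ or $(0,2)$, and only $(1,1)$ is globally generated in the required way below.

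First, the degree: by adjunction, $\det \cN_{R/X} \cong \omega_R \otimes \omega_X^{\vee}|_R$. Here $\omega_X|_R \cong \omega_{X/Y}|_R$, because $\omega_Y$ is trivial near $\mathfrak{o}$ and $Y$ is regular. Also $\omega_{X/Y}|_R$ has degree $0$, because $R$ is a component of the nodal fiber $X_{\mathfrak{o}}$ meeting the rest in two points, so $\omega_{X_\mathfrak{o}}|_R \cong \omega_R(x_{i-1}+x_i)$ has degree $0$. Thus $a+b=-2$.

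Next, I determine the splitting. Choose parameters $u,v$ generating $\mathfrak{m}_{\mathfrak{o}}$. By regularity of $X$ at the nodes and the hypothesis $V(f)=Y_u$ at $x_{i-1}$, $V(f)=Y_v$ at $x_i$, the completed local rings take the following forms:
\begin{itemize}
\item at $x_{i-1}$: $\widehat{\cO}_{Y,\mathfrak{o}}\bseries{s_1,t_1}/(s_1t_1 - u\epsilon_1)$, with $R=\{t_1=0\}$, say, so that $I_R=(t_1,v)$ there;
\item at $x_i$: $\widehat{\cO}_{Y,\mathfrak{o}}\bseries{s_2,t_2}/(s_2t_2-v\epsilon_2)$, with $R = \{s_2=0\}$ and $I_R=(s_2,u)$.
\end{itemize}
Away from the nodes, $R$ is cut out by $u,v$ in a smooth family. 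Consider the images $\bar u,\bar v$ of $u,v$ in $H^0(I_R/I_R^2)$, via the map $\alpha$ used in the proof of \Cref{L:flip-rational-curves}. The section $\bar u$ is nowhere zero on $R$ except at $x_{i-1}$, where $u=s_1t_1\epsilon_1^{-1}\in I_R^2$ and it vanishes to order exactly one (by the local chart, it equals $s_1\cdot \bar t_1$ with $s_1$ a coordinate on $R$). Similarly, $\bar v$ vanishes exactly at $x_i$ to order one. Hence $\bar u$ and $\bar v$ give injections $\cO(1)\cong\cO(x_{i-1})^{\vee\vee}$, i.e.\ each saturates to a line subbundle $\cO(1)\hookrightarrow I_R/I_R^2$. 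These two subbundles are distinct, since their fibers are independent at a general point, where $u,v$ form a regular system cutting out $R$ in a smooth family. They span, because at $x_{i-1}$ the saturation of $\bar u$ is $\bar t_1$ while $\bar v$ is a generator independent of it, and symmetrically at $x_i$. So $\cO(1)\oplus\cO(1)\to I_R/I_R^2$ is an injection of rank $2$ bundles of the same degree $2$, hence an isomorphism. Dualizing gives $\cN_{R/X}\cong\cO(-1)^{\oplus 2}$, and this also recovers the degree computation.

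The main obstacle is the local bookkeeping at the two nodes. This means verifying that the local equations really can be put in the form above, with $R$ one branch and $u$ (resp.\ $v$) lying in $I_R^2$ exactly along the correct branch, and then reading off the vanishing order of $\bar u$ at $x_{i-1}$ and the nonvanishing of $\bar v$ there. Regularity of $X$ makes $f$ a part of a regular system of parameters, so $f$ can be taken to be $u$ or $v$ times a unit, and this is what makes the computation go through.
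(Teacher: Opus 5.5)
Your argument is essentially the paper's: the saturations of $\bar u$ and $\bar v$ in $I_R/I_R^2$ are $\cO_R(x_{i-1})\cong\cO(1)$ and $\cO_R(x_i)\cong\cO(1)$, and they span every fiber (via the local bases $\bar t_1,\bar v$ at $x_{i-1}$, $\bar s_2,\bar u$ at $x_i$, and $\bar u,\bar v$ elsewhere), so $I_R/I_R^2\cong\cO(1)^{\oplus 2}$; your adjunction degree count is a harmless extra, since fiberwise spanning already gives the isomorphism. Two small slips are worth fixing: $u=\epsilon_1^{-1}s_1t_1$ lies in $\mathfrak{m}_{x_{i-1}}I_R$, not in $I_R^2$ (otherwise $\bar u$ would vanish identically near $x_{i-1}$, whereas your parenthetical computation correctly shows simple vanishing), and the opening $h^0=4$ and global-generation criterion cannot distinguish $\cO(1)^{\oplus 2}$ from $\cO\oplus\cO(2)$, though you never actually rely on it.
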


\begin{proof}
    After completing the DVR and replacing the ground field $k$ with its residue field, we may assume for simplicity of notation that $\kappa(\mathfrak{o}) = k$. Suppose $x_{i-1}$ is preserved over $Y_u$ and $x_i$ over $Y_v$. The two nodes are analytically locally of the form $\spec(k[\![x,y,u,v]\!]/(xy-u^{a}))$ and $\spec(k[\![x,y,u,v]\!]/(xy-v^{b}))$ as above. Since the family of nodal curves has regular total space $X$, then $u^{a}$ and $v^{b}$ must vanish with multiplicity one along $Y_u$ and $Y_v$ respectively, forcing $a = b = 1$. Thus we have $\widehat{\cO}_{X, x_{i-1}} \cong k[\![x, y, v]\!]$ with $u=xy$ and $\widehat{\cO}_{X, x_{i}} \cong k[\![x, y, u]\!]$ with $v=xy$.
    
    Let $\cI \subset \cO_X$ denote the ideal sheaf of $R_i$. We also use $u$ and $v$ to denote the corresponding sections of the rank two vector bundle $\cI/\cI^2 = \cI|_{R_i}$ on $R_i$. Let us analyze these sections at the completions of closed points of $R_i$. For a smooth point $z \in R_i$ that is not $x_i$ or $x_{i-1}$, we have $\widehat{\cO}_{X, z} \cong k[\![u, v, w]\!]$ for a local fiber coordinate $w$. Near the point $z$, we have $\cI = (u, v)$ and $u,v$ are a basis of $\cI/\cI^2$. At the node $x_{i-1}$, we have $\cI = (x, v)$ and $x,v$ are a basis of $\cI/\cI^2$. Symmetrically at $x_i$, we have $\cI = (u, y)$ and $u,y$ are a basis of $\cI/\cI^2$.

    Let $\cF_u$ and $\cF_v$ denote the saturations of the subsheaves of $\cI/\cI^2$ generated by $u$ and $v$ respectively. Locally, $\cF_u$ is generated by $x$ near $x_{i-1}$ and $u$ away from it. Similarly, $\cF_v$ is generated by $y$ near $x_{i}$ and $v$  away from it. By the previous paragraph, these local generators are a basis of $\cI/\cI^2$ at every point of $R_i$ and we thus have $\cI/\cI^2 \cong \cF_u \oplus \cF_v$. We also have that $\cF_u\cong \cO_{R_i}(x_{i-1}) \cong \cO_{\bP^1}(1)$, since $u$ vanishes to order one exactly at $x_{i-1}$. Symmetrically, $\cF_v\cong \cO_{R_i}(x_{i}) \cong \cO_{\bP^1}(1)$, since $v$ vanishes to order one at $x_{i}$. Thus, after dualizing, we get $\cN_{R_i/X} \cong \cO_{\bP^1}(-1)^{\oplus 2}$.
\end{proof}

Suppose we have a semistable $n$-marked family of nodal curves $(C, \sigma_{\bullet}) \to Y$ as in \Cref{notn: semistable-curves-over-Y} with regular total space, and that additionally we have a line bundle $L$ on $C$ whose restriction to $Y^\circ$ is nef relative to the stabilization. Since $C$ is regular, every node of $C_\mathfrak{o}$ is preserved over exactly one of $Y_u$ and $Y_v$. Set $d_i := \deg(L|_{R_i})$ for $i=1,\ldots, N$. By \Cref{L:always-is-atiyah-flop}, the curve $R_i$ may be flipped whenever one of $x_{i-1}, x_i$ is preserved over $Y_u$ and the other over $Y_v$, and \eqref{E:flipping-degree-rule} then gives
\begin{equation}\label{E:degree_update_rule}
    d_{i-1}' = d_{i-1}+d_i, \qquad d_i' = -d_i, \qquad d_{i+1}' = d_{i+1}+d_i.
\end{equation}
We distinguish the two cases of \Cref{notn: semistable-curves-over-Y}. Firstly, if $z_R$ is a node, the moves of \eqref{E:degree_update_rule} can be done for $1 \le i \le N$. On the other hand, if $z_R$ is a marking, \Cref{L:always-is-atiyah-flop} does not apply to $R_{N}$, and the moves of \eqref{E:degree_update_rule} can only be done for $1 \le i \le N-1$. In both cases we add one further integer $d_0$ to the degree sequence which represents the degree of $L$ on the rest of the fiber $C_{\mathfrak{o}}$. If $z_R$ is a node, we work modulo $N+1$, so that $d_{N+1} = d_0$.

\begin{lemma}\label{L: nef-algorithm}
     If one starts with a sequence of integers $d_0, d_1,\ldots,d_{N}$ as in the situation above and applies any sequence of moves \eqref{E:degree_update_rule}, where each move uses some $1 \leq i \leq N$ for which $d_i<0$, then this process must terminate in a sequence with $d'_i \geq 0$ for all $1 \leq i \leq N$.
\end{lemma}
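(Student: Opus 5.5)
The plan is to recognize the moves \eqref{E:degree_update_rule} as simple reflections in the Weyl group of a root system of finite type $A$, and then to use the standard fact that "firing a negative coordinate" strictly decreases a nonnegative integer invariant, namely the number of positive roots on which the weight is negative. The key observation is that the indices allowed to move form a path, not a cycle. In the node case these are $1 \le i \le N$; in the marking case they are $1 \le i \le N-1$. The extra entry $d_0$ in the node case, and the entries $d_0$ and $d_N$ in the marking case, are never used as the index $i$ of a move. They only get incremented as neighbours, so for termination we may simply ignore them. Let $M$ denote the number of movable indices, i.e. $M = N$ or $M = N-1$. The movable indices are then the vertices of the Dynkin diagram $A_M$.

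Concretely, let $\Phi$ be the root system $A_M$ with simple roots $\alpha_1,\dots,\alpha_M$, where $\alpha_i$ and $\alpha_{i\pm1}$ are adjacent. Let $\varpi_i$ be the fundamental weights, and encode the movable part of the sequence as the integral weight $\lambda = \sum_{i} d_i \varpi_i$, so that $d_i = \langle \lambda, \alpha_i^\vee\rangle$. In the fundamental-weight basis, $\alpha_i$ has coordinate $2$ at $i$, coordinate $-1$ at $i\pm1$, and $0$ elsewhere. Hence the simple reflection
\[
s_i(\lambda) = \lambda - \langle\lambda,\alpha_i^\vee\rangle \alpha_i
\]
replaces $d_i$ by $-d_i$, replaces $d_{i\pm1}$ by $d_{i\pm1}+d_i$, and leaves the other movable coordinates unchanged. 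This is exactly \eqref{E:degree_update_rule} restricted to the movable coordinates. When $i\pm1$ is a non-movable index such as $0$ or $N$, that entry is also updated by $+d_i$ as in \eqref{E:degree_update_rule}, but it plays no role in what follows.

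Now define
\[
\mathcal{N}(\lambda) := \{\beta \in \Phi^+ : \langle \lambda, \beta^\vee\rangle < 0\},
\]
a finite set of size at most $|\Phi^+| = M(M+1)/2$. Suppose $d_i = \langle\lambda,\alpha_i^\vee\rangle < 0$, so that $\alpha_i \in \mathcal{N}(\lambda)$. We use the standard fact that $s_i$ permutes $\Phi^+ \setminus \{\alpha_i\}$ and sends $\alpha_i$ to $-\alpha_i$, together with the identity
\[
\langle s_i\lambda, \beta^\vee\rangle = \langle \lambda, (s_i\beta)^\vee\rangle .
\]
From these we get $\mathcal{N}(s_i\lambda) = s_i\bigl(\mathcal{N}(\lambda)\setminus\{\alpha_i\}\bigr)$. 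Here $-\alpha_i$ is not positive, and $\langle s_i\lambda,\alpha_i^\vee\rangle = -d_i > 0$. Therefore every allowed move decreases $|\mathcal{N}(\lambda)|$ by exactly one. Consequently any sequence of allowed moves has length at most $M(M+1)/2$, and in particular it terminates.

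Finally, the process stops exactly when no movable index has $d_i<0$. This gives $d'_i \ge 0$ for all movable $i$, which is $1 \le i \le N$ in the node case. In the marking case it gives $d'_i \ge 0$ for $1\le i\le N-1$, and the entry $d'_N$ on the marked component has to be handled separately. I expect this last point to be the only real obstacle. One either has to check that the statement is intended only for the movable range $1 \le i \le N-1$ in the marking case, or one has to control $d_N$ using the hypothesis that $L$ is nef relative to the stabilization over $Y^\circ$. The route I would try first is the following: $d_N$ only ever increases, since $d'_N = d_N + d_{N-1}$ is applied only when $d_{N-1} < 0$. So nonnegativity of $d_N$ must come from the initial data, namely from the degree of $L$ on the tail component over $Y_u$ or $Y_v$. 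The root-system argument above is robust, so the remaining work is bookkeeping about which indices are movable.
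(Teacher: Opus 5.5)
Your termination argument is correct. On the movable coordinates the moves \eqref{E:degree_update_rule} are exactly the simple reflections of $A_M$, and $|\mathcal{N}(\lambda)|$ drops by one per move. This is the paper's invariant in disguise: $\langle\lambda,(\alpha_{j+1}+\cdots+\alpha_k)^\vee\rangle=D_k-D_j$ for the partial sums $D_j=d_0+\cdots+d_j$, so $|\mathcal{N}(\lambda)|$ counts the inversions $j<k$, $D_j>D_k$, which the paper shows decrease.

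The gap is the marking case. There the statement requires $d'_N\ge 0$, although $N$ is never a movable index, and your proposed fix rests on a sign error. A move at $i=N-1$ is only made when $d_{N-1}<0$, so $d'_N=d_N+d_{N-1}<d_N$: $d_N$ strictly \emph{decreases}. Consequently $d_N\ge 0$ at the start is not enough. For $N=2$, $(d_1,d_2)=(-3,1)$ becomes $(3,-2)$ after one move. The conclusion really needs more of the geometric input from ``the situation above''.

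The missing idea is to apply nefness over $Y^\circ$ to every partial tail, not just the whole tail. Suppose $x_j$ ($0\le j\le N-1$) is preserved over $Y_u$, say. Then the part of the rational tail over $\eta_u$ lying on the marking side of the node specializing to $x_j$ has flat limit $R_{j+1}\cup\cdots\cup R_N$. Hence $d_{j+1}+\cdots+d_N\ge 0$, i.e.\ $D_N\ge D_j$ for all $j$. Moves at $1\le i\le N-1$ only exchange $D_{i-1}$ and $D_i$ and fix $D_N$. So this inequality persists and gives $d_N=D_N-D_{N-1}\ge 0$ at every stage, and in particular at the end.

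In your language: add $N$ as a never-fired vertex, passing from $A_{N-1}$ to $A_N$. The hypothesis says $\lambda$ pairs nonnegatively with every positive root whose support contains $\alpha_N$, and $s_1,\dots,s_{N-1}$ permute exactly these roots.

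The paper also uses the same labels, remembering whether each $x_j$ is preserved over $Y_u$ or $Y_v$, to show that $d_i<0$ forces $x_{i-1}$ and $x_i$ to be preserved over different branches. This guarantees every move is an honest Atiyah flop, as required in Step 2 of \Cref{T:s-complete-generically-smooth}. A Weyl-group argument that forgets the $u/v$ labels cannot detect this.
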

\begin{proof}
    Let $D_j := \sum_{i=0}^j d_i$ for $j=0,\ldots,N$; we think of $D_j$ as a label attached to the special point $x_j$. Since $L$ is nef over $Y^\circ$, the labels $D_j$ at the nodes preserved over $Y_u$ are weakly increasing in $j$, and similarly for the labels at the nodes preserved over $Y_v$. Moreover, if $z_R$ is a marking, then $D_N \ge D_j$ for all $j$.
    
    The condition that $d_i<0$ is equivalent to $D_i < D_{i-1}$. For every $1 \le i \le N$, then \eqref{E:degree_update_rule} is equivalent to $D_{i-1}'=D_i$, $D_i' = D_{i-1}$, and $D_j'=D_j$ for all $j \neq i,i-1$. In other words, each move chooses a pair of adjacent elements $(D_{i-1},D_i)$ for $i \in 1,\ldots,N$ that are strictly decreasing and swaps them.
    
    A move only exchanges two nodes with one preserved over $Y_u$ and one over $Y_v$, and it never moves the marking. Thus, the sequence of $D_j$ labels at the nodes preserved over $Y_u$ is unchanged by a move, and similarly for $Y_v$ and for the label of the marking. Thus every move preserves that the $D_j$ at the nodes preserved over $Y_u$ are weakly increasing in $j$, and similarly for the labels at the nodes preserved over $Y_v$. Consequently, if $d_i < 0$ are some point, then $x_{i-1}$ and $x_i$ are preserved over different $Y_u$ and $Y_v$, since otherwise $D_{i-1} \le D_i$. Thus the move at $i$ is allowed. If $z_R$ is a marking in particular, $d_N \ge 0$ at every step.
    
    Each moves strictly decreases the number of pairs $j < k$ where $D_j > D_k$, so this process terminates after finitely many steps. At that point, no $d_i$ is negative for $1 \le i \le N$, so we have $D_0 \leq D_1 \leq \cdots \leq D_N$, which corresponds to $d_i = D_i - D_{i-1} \geq 0$ for $1 \leq i \leq N$.
\end{proof}

\subsection{\texorpdfstring{$\Theta$}{Theta}-completeness and \texorpdfstring{$S$}{S}-completeness}
We first prove $\Theta$- and $S$-completeness of the stack $\mathscr{X}^{\rep}_{g,n}$ under the assumption that the generic fiber is smooth, and then reduce the general case to this one. As in \Cref{notn: Y over the stack}, we use $Y$ to denote either the surface $\spec(R[s, t]/(st-\pi))$ or $\Spec(R[t])$, and $Y^\circ := Y \backslash \mathfrak{o}$ for a DVR $R$.

\begin{thm}\label{T:s-complete-generically-smooth}
    Fix a morphism $\cY^\circ \to \mathscr{X}^{\rep}_{g,n}$ with corresponding composition $Y^{\circ} \to \cY^\circ \to \mathscr{X}^{\rep}_{g,n}$. If the generic fiber of the corresponding family of curves $C^{\circ} \to Y^{\circ}$ is smooth, an extension $Y \to \mathscr{X}^{\rep}_{g,n}$ exists and is unique.
\end{thm}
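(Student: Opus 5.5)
Uniqueness is immediate: $\mathscr{X}^{\rep}_{g,n}$ has affine diagonal by \Cref{prop: big stack of perturbed curves is algebraic}, so by \Cref{remark: affine diagonal reductions} (or the extension-of-isomorphisms result used in \Cref{lem: no-Gm-action}) any two extensions over $Y$ agree. For existence we may, again by \Cref{remark: affine diagonal reductions}, pass to an extension of DVRs and assume the residue field is algebraically closed. Write the given family as $(C^\circ,\sigma^\circ_\bullet,L^\circ)$ over $Y^\circ$.

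The plan has four steps.

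\textbf{Step 1: reduce to a regular total space.} Since $C^\circ\to Y^\circ$ is generically smooth, its singularities are of the form $xy=f$. Resolving them by blow-ups, as in \Cref{lemma: min resolution family of nodal curves}, inserts chains of rational curves on which we pull back $L^\circ$. The pulled-back bundle is only nef, not ample, on the new curves. After extending, we contract those curves back using \Cref{Lem: semistable_contraction_rel_nef_is_rel_SA}, applied to a suitable nef twist. This step should be harmless, so we assume $C^\circ$ is regular.

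\textbf{Step 2: extend the curve.} Apply \Cref{coroll: S-filling for families of semistable curves with regular total space} to get a regular semistable family $C\to Y$ extending $C^\circ$. Since $C$ is regular and $\mathfrak{o}$ has codimension two in $Y$, the bundle $L^\circ$ extends to a line bundle $L$ on $C$: extend it as a Weil divisor by closure, which is Cartier because $C$ is regular. The extension is unique up to twisting by components of the central fibre $C_{\mathfrak{o}}$, and the only such components are the rational curves $R_i$ of the chains described in \Cref{notn: semistable-curves-over-Y}.

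\textbf{Step 3: make $L$ nef on the central chains (the main obstacle).} In general $L$ will not be nef relative to the stabilization on $C_{\mathfrak{o}}$. Over each node or marking $z_R$ of $C^{\st}_R$, record the degrees $d_0,\dots,d_N$ of $L$ on the chain. Whenever some $d_i<0$, the proof of \Cref{L: nef-algorithm} shows that $x_{i-1}$ and $x_i$ are preserved over different branches $Y_u$ and $Y_v$. Then \Cref{L:always-is-atiyah-flop} gives $\cN_{R_i/C}\cong\cO(-1)^{\oplus 2}$, so \Cref{L:flip-rational-curves} lets us perform the Atiyah flop along $R_i$. The result is again a regular semistable family over $Y$, unchanged over $Y^\circ$, and the degrees transform by \eqref{E:degree_update_rule}. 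By \Cref{L: nef-algorithm} this process terminates with all $d_i\ge 0$, so $L$ becomes nef relative to the stabilization.

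The delicate points in this step are twofold:
\begin{itemize}
\item checking that the flop preserves the hypotheses (regularity, nodal fibres, and the branch assignments of the nodes) so that the iteration can continue;
\item checking that flops at different chains are independent.
\end{itemize}
Both are local over each point $z_R$, and I expect them to follow from the explicit local model in \Cref{L:flip-rational-curves}(2).

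\textbf{Step 4: contract and verify.} Apply \Cref{Lem: semistable_contraction_rel_nef_is_rel_SA} to the nef bundle $L$ to contract the curves $R_i$ with $d_i=0$. This produces a semistable family $C'\to Y$ on which $L$ descends and is ample relative to $C^{\st}$. Ampleness is checked on fibres and holds over $Y^\circ$ by hypothesis. The contraction is an isomorphism over $Y^\circ$, because there $L^\circ$ is already relatively ample, so no curves are contracted. Hence $(C',\sigma_\bullet,L)$ is the desired extension $Y\to\mathscr{X}^{\rep}_{g,n}$.
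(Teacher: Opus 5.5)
Your proposal is correct and follows the paper's proof essentially step for step: minimal resolution of $C^\circ$, regular filling via \Cref{coroll: S-filling for families of semistable curves with regular total space}, extension of the pulled-back bundle, Atiyah flops (\Cref{L:always-is-atiyah-flop}, \Cref{L:flip-rational-curves}) until \Cref{L: nef-algorithm} terminates, and contraction of the degree-zero curves with \Cref{Lem: semistable_contraction_rel_nef_is_rel_SA}. Two small slips: first, no separate ``nef twist'' is needed, because the single Step 4 contraction of the pulled-back bundle already undoes the resolution over $Y^\circ$ (so it is not an isomorphism there, but it recovers $C^\circ$ as the relative $\Proj$ of the pushforwards of powers of $L^\circ$, which is how the paper argues); second, the extension of $L^\circ$ to the regular threefold $C$ is unique, since the components of $C_{\mathfrak{o}}$ have codimension two in $C$ and so give no twisting ambiguity.
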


\begin{proof}

\saranew{}
    Since $\mathscr{X}^{\rep}_{g,n}$ has affine diagonal, the filling is unique (see \Cref{remark: affine diagonal reductions}), so we only need to show existence. Furthermore, after passing to an extension of the DVR, we may assume that $R$ has algebraically closed residue field. 
    
    The morphism $\cY^\circ \to \mathscr{X}^{\rep}_{g,n}$ corresponds to a family $(C^\circ, \sigma^{\circ}_{\bullet}, L^{\circ})\to \cY^\circ$ with smooth generic fiber. We construct an extension $(C, \sigma_{\bullet}, L) \to \cY$ of the triple $(C^{\circ}, \sigma^{\circ}_{\bullet}, L^{\circ})$ so that $L^{\circ}$ is relatively ample for the stabilization morphism. Our proof proceeds in three steps.\\
    
    \textbf{Step 1.} Let $(C'^{, \circ}, \sigma'^{,\circ}_\bullet) \to (C^\circ, \sigma^{\circ}_\bullet)$ be the minimal resolution as in the proof of \Cref{lemma: min resolution family of nodal curves}, so that $C'^{, \circ}$ is regular. We denote by $L'^{, \circ}$ the pullback of the line bundle $L^{\circ}$ to $C'^{,\circ}$. Note that $L'^{, \circ}$ is nef relative to the stabilization. By \Cref{coroll: S-filling for families of semistable curves with regular total space}, there is an extension of $(C'^{, \circ}, \sigma'^{,\circ}_\bullet)$ to a family $(C', \sigma'_\bullet)$ of semistable curves over $\cY$ such that $C'$ is regular. Since the total space $C'$ is regular, we can extend $L'^{, \circ}$ to a line bundle $L'$ on $C'$. By construction, we have that the restriction of $(C', \sigma'_\bullet, L')$ to $\cY^{\circ}$ recovers $(C'^{, \circ}, \sigma'^{, \circ}_\bullet, L'^{, \circ})$. We abuse notation and denote by the same symbols the corresponding $\mathbb{G}_m$-equivariant families over the schemes $Y$ and $Y^{\circ}$, respectively.\\

    \textbf{Step 2.} Using \Cref{L:flip-rational-curves}, flip the irreducible components $R \cong \mathbb{P}^1$ of rational bridges in the central fiber $C'_{\mathfrak{o}}$ such that $\deg(L'|_{R}) <0$. After each flip we still get a family of semistable curves over $Y$. The rational curves $R$ in the central fiber with $\deg(L'|_{R}) <0$ all have one node preserved over $Y_u$, and one node preserved over $Y_v$, where we use the notation $(u, v)$ of \Cref{notn: semistable-curves-over-Y} for the pair of parameters cutting out $\mathfrak{o}\in Y$. Thus by \Cref{L:always-is-atiyah-flop} the flops are all Atiyah flops. If both nodes are preserved over $Y_u$ (or both over $Y_v$), we do not need to flip such $R$ since $\deg(L'|_{R}) \ge 0$ by the fact that $L'$ is nef on the rational bridges in the fibers of $C'^{, \circ}$ over $Y^{\circ}$. We can always perform a sequence of flips until we get a nef line bundle by \Cref{L: nef-algorithm}.
    Denote the family after flips still by $(C', \sigma'_{\bullet}, L')$.\\

    \textbf{Step 3.} Let $(C^{\st}, \sigma_{\bullet}^{\st})$ denote the stabilization of $(C', \sigma'_{\bullet})$. By \Cref{Lem: semistable_contraction_rel_nef_is_rel_SA}, $L'$ is semiample relative to $C^{\st}$ and we get a map $(C', \sigma'_{\bullet}) \to (C, \sigma_{\bullet})$ contracting rational curves $R$ with $\deg(L'|_{R})=0$, where $(C, \sigma_{\bullet})$ is a family of $n$-marked semistable curves over $Y$. Moreover, $L'$ descends to a line bundle $L$ on $C$ which is ample relative to $C^{\st}$; this follows directly from the relative $\Proj$ construction over $C^{\st}$ and the fact that $C' \to C$ is given by a relative linear series of a power of $L'$. The restriction of $(C, \sigma_{\bullet}, L)$ to $Y^\circ$ recovers $(C^\circ, \sigma_{\bullet}^{\circ}, L^\circ)$. Indeed, by the relative ampleness of $L^{\circ}$, the curve $C^{\circ}$ is the relative Proj of $\bigoplus_{\ell \geq 0} \st^{\circ}_*((L^{\circ})^{\otimes \ell})$, where $\st^{\circ}: (C^{\circ}, \sigma_{\bullet}^{\circ}) \to (C^{\st, \circ}, \sigma^{\st, \circ}_{\bullet})$ is the stabilization morphism.
\end{proof}

We now explain how to fill families where the generic fiber is of genus zero. For the following result, we define $\mathscr{X}^{\rep}_{0,2}$ similarly as in \Cref{defn: non-stacky relatively polarized twisted curves}, with the convention that for condition (2) of \Cref{defn: stack of general curves with line bundles}, we require $L$ to be ample relative to the base $T$.
\begin{prop}\label{T:s-complete-generically-smooth-genus-zero}
    Let $f: \cY^\circ \to \mathscr{X}^{\rep}_{0,2}$ be a family $(C^{\circ}, \sigma^{\circ}_1, \sigma^{\circ}_2, \cL^{\circ})$ of $2$-marked twisted curves of genus $0$ such that the generic fiber of $C^{\circ}$ is isomorphic to $\mathbb{P}^1$. Then, there is a morphism $\widetilde{f}: Y\to \mathscr{X}^{\rep}_{0,2}$ extending the composition $Y^{\circ} \to \cY^{\circ} \to \mathscr{X}^{\rep}_{0,2}$.
\end{prop}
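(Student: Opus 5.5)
We mimic the three-step proof of \Cref{T:s-complete-generically-smooth}, with \Cref{lemma: filling for genus 0 families of semistable curves with regular total space} replacing \Cref{coroll: S-filling for families of semistable curves with regular total space}. As there, we may pass to an extension of $R$ with algebraically closed residue field. Since we only need a filling over $Y$, not over $\cY$, we can work $\bG_m$-equivariantly and never have to descend. Write $(C^\circ,\sigma_1^\circ,\sigma_2^\circ,L^\circ)$ for the family, which is non-twisted since it is a point of $\mathscr{X}^{\rep}_{0,2}$. In genus $0$ with two markings there is no stabilization. Its role is played by the marked $\bP^1$-bundle built in the proof of \Cref{lemma: filling for genus 0 families of semistable curves with regular total space}, and ``ample relative to the stabilization'' should be read as ample relative to the base, as in the convention preceding the statement.

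\textbf{Step 1.} Take the minimal resolution $C'^{,\circ}\to C^\circ$ of \Cref{lemma: min resolution family of nodal curves}, applied over each of the two branches $\cY_u,\cY_v$ (which are $\Spec(R)$, $\bA^1_K$, or $B\bG_m\times\Spec R$). These resolutions glue by uniqueness, and the markings lift because they lie in the smooth locus. Pull $L^\circ$ back to $L'^{,\circ}$, which is nef on every fiber component. By \Cref{lemma: filling for genus 0 families of semistable curves with regular total space}, $(C'^{,\circ},\sigma'^{,\circ}_1,\sigma'^{,\circ}_2)$ extends to a regular $2$-marked semistable family $C'\to\cY$. Since $C'$ is regular, $L'^{,\circ}$ extends to a line bundle $L'$ on $C'$. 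We pass to the $\bG_m$-equivariant family over $Y$.

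\textbf{Step 2.} The central fiber $C'_{\mathfrak o}$ is a chain $R_1\cup\cdots\cup R_N$ with $\sigma_1$ on $R_1$ and $\sigma_2$ on $R_N$. This is exactly the marking case of \Cref{notn: semistable-curves-over-Y}, except that now both ends of the chain are markings. Every interior node of the chain is preserved over exactly one of $Y_u$, $Y_v$, because $C'$ is regular. Suppose a component $R_i$ has $\deg(L'|_{R_i})<0$. Then it must have one node preserved over $Y_u$ and the other over $Y_v$. So \Cref{L:always-is-atiyah-flop} gives normal bundle $\cO(-1)^{\oplus 2}$, and \Cref{L:flip-rational-curves} lets us flip $R_i$, with degrees changing by the rule \eqref{E:degree_update_rule}.

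Termination is proved as in \Cref{L: nef-algorithm}. Define partial sums $D_j$ with $D_0=0$, and view a flip as swapping adjacent labels. Nefness over $Y^\circ$ makes the $D_j$ increasing along the $Y_u$-nodes and along the $Y_v$-nodes. The two endpoint labels are never moved, and they bound all the others: $D_0\le D_j\le D_N$. Each flip strictly decreases the number of inversions, so after finitely many flips $L'$ is nef on every component of $C'_{\mathfrak o}$. These flips are $\bG_m$-equivariant because their centers are.

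\textbf{Step 3.} We now contract. In Step~3 of \Cref{T:s-complete-generically-smooth} this used \Cref{Lem: semistable_contraction_rel_nef_is_rel_SA} relative to $C^{\st}$; here we use $\Proj_Y\bigoplus_m H^0(L'^{\otimes m})$ relative to $Y$. This contracts exactly the components on which $L'$ has degree $0$ and yields a semistable $2$-marked family $C\to Y$. Over $Y^\circ$ it recovers $C^\circ$, because $L^\circ$ is ample and $C^\circ$ is its own Proj.

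\textbf{Main obstacle.} The delicate point is this contraction step in genus $0$, since \Cref{Lem: semistable_contraction_rel_nef_is_rel_SA} was stated only for genus $g\ge 2$. We must check that $L'$ is $Y$-semiample, which comes down to vanishing of $H^1$ of $L'^{\otimes m}$ on the fibers. This holds because a nef line bundle on a chain of $\bP^1$'s has vanishing $H^1$ and is globally generated, so cohomology and base change applies. The same local computation as in \Cref{Lem: semistable_contraction_rel_nef_is_rel_SA} then shows that the fibers of the Proj are nodal chains and that the construction commutes with base change. A secondary point is that after a flip a marking could land on a flipped curve. This cannot happen, since flips involve only components with both special points nodes, and the terminal components carrying the markings are never flipped.
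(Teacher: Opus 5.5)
Your proposal is correct and follows the paper's proof essentially step for step. It uses the minimal resolution, the genus-$0$ filling lemma (\Cref{lemma: filling for genus 0 families of semistable curves with regular total space}), and the extension of $L'$ across the regular total space. It then flips negative-degree components until $L'$ is nef, and your adaptation of \Cref{L: nef-algorithm} to a chain with two marked ends is right. Finally, it contracts the degree-$0$ components.

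The only difference is cosmetic: you contract via $\Proj_Y$ of the section ring of $L'$ rather than citing \Cref{Lem: semistable_contraction_rel_nef_is_rel_SA}, which is stated for $g\ge 2$. That is a sensible way to handle genus $0$. You should also record, as the paper does, that $L'$ descends to a relatively ample $L$ on $C$ extending $L^\circ$.
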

\begin{proof} Let $C'^{, \circ} \to C^\circ$ be the minimal resolution as in the proof of \Cref{lemma: min resolution family of nodal curves}, so that $C'^{, \circ}$ is regular, and let $L'^{,\circ}$ be the pullback of $L^{\circ}$ to $C'^{,\circ}$. Let $\sigma_1'^{,\circ}, \sigma_2'^{,\circ}$ denote the flat closures in $C'^{,\circ}$ of the generic fibers of the markings $\sigma_1^{\circ}, \sigma_2^{\circ}$. We have that $(C'^{,\circ}, \sigma_1'^{,\circ}, \sigma_2'^{,\circ})$ is a $2$-marked family of genus $0$ curves over $\cY^{\circ}$.

By \Cref{lemma: filling for genus 0 families of semistable curves with regular total space}, we get a 2-marked family $(C', \sigma_1', \sigma_2', L')$ over $\cY$, such that the restriction of $(C', \sigma_1', \sigma_2')$ to $\cY^{\circ}$ recovers $(C'^{, \circ}, \sigma_1'^{,\circ}, \sigma_2'^{,\circ})$. Also by \Cref{lemma: filling for genus 0 families of semistable curves with regular total space}, $C'$ is regular, so the line bundle $L'^{,\circ}$ extends to a line bundle $L'$ on $C'$. We abuse notation and denote by the same symbols the corresponding $\mathbb{G}_m$-equivariant families over the schemes $Y$ and $Y^{\circ}$ respectively.
    
    The filling for the $2$-marked genus zero family on the level of coarse spaces can now be obtained in the same way as \Cref{T:s-complete-generically-smooth}. That is, flip rational curves $R$ in the central fiber where $\deg(L'|_{R}) < 0$ until $L'$ is nef, and then contract rational curves where $L'$ has degree zero using \Cref{Lem: semistable_contraction_rel_nef_is_rel_SA}. This gives a contraction $C' \to C$ where $(C, \sigma_1, \sigma_2) \to Y$ is a family of nodal curves, and $L'$ descends to a line bundle $L$ on $C$.
\end{proof}

To prove $\Theta$- and $S$-completeness of $\mathscr{X}^{\rep}_{g, n}$, we reduce to the case of \Cref{T:s-complete-generically-smooth} and \Cref{T:s-complete-generically-smooth-genus-zero} by taking the normalization of the family of curves and then gluing.

\begin{thm}\label{T:s-complete}
    The stack $\mathscr{X}^{\rep}_{g,n}$ is $\Theta$-complete and $S$-complete.
\end{thm}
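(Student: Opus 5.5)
We reduce to \Cref{T:s-complete-generically-smooth} and \Cref{T:s-complete-generically-smooth-genus-zero} by normalizing the generic fiber and gluing back. Since $\mathscr{X}^{\rep}_{g,n}$ has affine diagonal, uniqueness is automatic and we may enlarge $R$ (\Cref{remark: affine diagonal reductions}). By \Cref{lem: no-Gm-action}, it suffices to extend the composition $Y^\circ \to \cY^\circ \to \mathscr{X}^{\rep}_{g,n}$ to $Y$.

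\textbf{Setup.} Start with $(C^\circ,\sigma^\circ_\bullet,L^\circ)$ over $\cY^\circ$. After a finite extension of $R$, assume the nodes of the generic fiber $C_K$ (over $\Spec K \subset \cY^\circ$) are rational and its components are geometrically irreducible. Let $\nu_K\colon \widetilde C_K \to C_K$ be the normalization, marked by the original markings together with the preimages of the nodes. Each component of $C_K$ is then either:
\begin{itemize}
\item a stable (or semistable) marked curve of genus $g_j$, or
\item a $\bP^1$ with exactly two special points, which is a component of a rational bridge in $C_K$.
\end{itemize}
Unmarked rational components with two nodes belong to bridges, and semistability forbids anything else.

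\textbf{Normalizing over $\cY^\circ$.} The nodes of $C_K$ extend to sections of the singular locus of $C^\circ \to \cY^\circ$. The partial normalization along these sections gives a family $\widetilde C^\circ \to \cY^\circ$. It has extra sections over the preimages of the nodes, smooth generic fiber, and the pullback $\widetilde L^\circ$ of $L^\circ$. To check that $\widetilde C^\circ$ is again a family of nodal curves with disjoint sections in the smooth locus, work étale locally near each generically-preserved node. There the family looks like $xy=0$ times the base, so normalization is compatible with base change.

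\textbf{Filling the pieces.} Each component of $\widetilde C^\circ$ is a marked semistable curve with $\widetilde L^\circ$ ample relative to its own stabilization.
\begin{itemize}
\item For components of genus $\ge 1$, or genus $0$ with at least $3$ markings, apply \Cref{T:s-complete-generically-smooth}. Its proof works for any stable type $(g_j,n_j)$, not only $g\ge2$.
\item For two-pointed $\bP^1$ components, apply \Cref{T:s-complete-generically-smooth-genus-zero}. There $\widetilde L^\circ$ is relatively ample over the base, since the bridge is contracted by stabilization.
\end{itemize}
Each piece gives a filling over $Y$ with sections. We then glue pairs of sections corresponding to the same node via the pushout of \Cref{P:nisnevich_pushout} (or Ferrand pushout). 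The glued object is a family of nodal curves over $Y$: étale locally near the glued sections it is $\{xy=0\}\times Y$. The line bundles glue because the identification of fibers of $\widetilde L^\circ$ at paired sections extends over $Y$. Concretely, it is an isomorphism of line bundles on $Y^\circ$, which extends uniquely over the regular surface $Y$ by Hartogs.

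\textbf{Checking the result.} The glued triple restricts to the original one on $Y^\circ$, since gluing commutes with restriction. It is semistable of genus $g$ by an arithmetic genus count. It remains to verify that $L$ is ample relative to the stabilization $C \to C^{\st}$. Ampleness is open, so it is enough to check at $\mathfrak o$. On each piece, $L$ is ample relative to that piece's stabilization. The stabilization of the glued curve contracts at most the bridge pieces, together with the chains inside pieces that are contracted by their own stabilization. So every contracted component either lies in a piece's exceptional locus or is a bridge piece, where $L$ is ample over the base.

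\textbf{Main obstacle.} The hardest step is the gluing. We must show that the sections produced in the separate fillings land in the smooth locus over all of $Y$, including $\mathfrak o$, so that the pushout yields nodes rather than worse singularities. In the proofs of \Cref{T:s-complete-generically-smooth} and \Cref{lemma: filling for genus 0 families of semistable curves with regular total space}, the sections come from \Cref{coroll: S-filling for families of semistable curves with regular total space} and sit in the smooth locus. The flips of Step 2 and the contractions of Step 3 happen away from the markings, except on the terminal rational tail component. There we should check directly that contracting degree-zero components keeps the marking in the smooth locus.
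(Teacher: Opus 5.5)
Your proposal is correct and follows essentially the same route as the paper: normalize along the generically persistent nodes, fill each connected piece with \Cref{T:s-complete-generically-smooth} or \Cref{T:s-complete-generically-smooth-genus-zero}, then reglue the paired sections and extend the identifications of line-bundle fibers by Hartogs. Some differences from the paper, mostly in your favor: the paper glues with the Ferrand pushout, which is the right tool here rather than \Cref{P:nisnevich_pushout}; you also handle genus-$0$ pieces with at least three special points, which the paper's case split omits; you explicitly check relative ampleness of the glued family; and the ``main obstacle'' you flag is already settled by \Cref{Lem: semistable_contraction_rel_nef_is_rel_SA}(2), whose output is a marked semistable family with markings in the smooth locus.
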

\begin{proof}
Since $\mathscr{X}^{\rep}_{g,n}$ is open $\mathscr{X}_{g,n}$ by \Cref{prop: open substacks cartesian}, it has affine diagonal. Thus we only need to show existence of the filling and we may replace $R$ with an arbitrary extension of DVR's (see \Cref{remark: affine diagonal reductions}). We start with a family $(C^\circ,\sigma^\circ_\bullet, L^\circ)$ over $\cY^\circ$. By \Cref{lem: no-Gm-action}, it suffices to extend to a family $(C, \sigma_{\bullet}, L)$ over $Y$.

Take the normalization $\nu: \widetilde{C}^{\circ} \to C^{\circ}$, and let $\Delta \subset \widetilde{C}^{\circ}$ denote the closed reduced subscheme underlying the preimage of the nodal locus of $C^{\circ}$ (i.e. the conductor). Then $\Delta \to \cY^{\circ}$ is \'etale, proper and quasifinite, and it is contained in the smooth locus of $\widetilde{C}^{\circ} \to \cY^{\circ}$. After passing to an extension of the DVR $R$, we may assume that the generic fibers of every connected component of both $\Delta$ and $C^{\circ}$ are geometrically connected. This implies that $\Delta$ is a disjoint union $\Delta = \bigsqcup_i \xi_i^{\circ}$ of some even number $2m$ of markings. Considering in addition the markings $\sigma^{\circ}_{\bullet}$ already present on $C^{\circ}$, we view $(\widetilde{C}^{\circ}, \sigma_{\bullet}^{\circ}, \xi^{\circ}_{\bullet})$ as a family of disconnected curves with $n+2m$ markings indexed by the set $[n] \sqcup [2m]$. We denote by $\mathrm{inv}: [2m] \to [2m]$ the unique fixed-point free involution whose orbits are the  pairs of markings $\xi_i^\circ$ with the same image in the nodal locus of $C^{\circ}$. We have identifications $\xi_i^\circ \xrightarrow{\sim} \xi_{\mathrm{inv}(i)}^\circ$ for all $i \in [2m]$ and the pushout of $\widetilde{C}^{\circ}$ obtained by gluing these pairs recovers $C^{\circ}$.

Let $\widetilde{C}^{\circ} = \bigsqcup_{i=1}^{\ell} \widetilde{C}_i^{\circ}$ be the decomposition into connected components, so that each $\widetilde{C}_i^{\circ}$ is a $\mathbb{G}_m$-equivariant family of curves over $Y^{\circ}$ with smooth generic fiber. For each $i$, let $g_i$ denote the genus of the fibers of $\widetilde{C}_i^{\circ}$, and $n_i$ (resp. $m_i$) denote the number of markings $\sigma_{\bullet}^{\circ}$ (resp. $\xi_{\bullet}^{\circ}$) lying on $\widetilde{C}_i^{\circ}$. We may then think of $\left(\widetilde{C}_i^{\circ}, \sigma_{\bullet}^{\circ} \cap\widetilde{C}_i^{\circ}, \xi_{\bullet}^{\circ} \cap  \widetilde{C}_i^{\circ}, \nu^*(L^{\circ})|_{\widetilde{C}_i^\circ}\right)$ as a family $\cY^\circ \to \mathscr{X}^{\rep}_{g_i, n_i + m_i}$ with smooth generic fiber. 

We extend this to a family $(\widetilde{C}_i, \sigma_{\bullet} \cap\widetilde{C}_i, \xi_{\bullet} \cap  \widetilde{C}_i, \widetilde{L}|_{\widetilde{C}_i})$  in $\mathscr{X}^{\rep}_{g_i, n_i + m_i}$ over $Y$. Indeed, if $g_i \geq 2$, we use \Cref{T:s-complete-generically-smooth}. If $g_i =1$, then by construction the number of markings $n_i +m_i$ is at least $1$, and so we have access to the stabilization morphism. The same argument as in \Cref{T:s-complete-generically-smooth} then proves the existence of the extension. Finally, if $g_i =0$, then $n_i + m_i =2$, and so we may apply \Cref{T:s-complete-generically-smooth-genus-zero}. In summary, we have extended the $(n+2m)$-marked (disconnected) family $(\widetilde{C}^{\circ}, \sigma^{\circ}_{\bullet}, \xi^{\circ}_{\bullet}, \nu^*(L^{\circ}))$ to a family $(\widetilde{C}, \sigma_{\bullet}, \xi_{\bullet}, \widetilde{L})$ over $Y$.
    
We note that for each $i \in [2m]$, the automorphism $\xi^{\circ}_i \xrightarrow{\sim} \xi^{\circ}_{\mathrm{inv}(i)}$ extends uniquely to  $\xi_i \xrightarrow{\sim} \xi_{\mathrm{inv}(i)}$ over $Y$ by Hartogs's theorem. By the existence of pushouts in \cite[Thm. 1.8]{ferrans-pushouts}, we may glue these pairs of subschemes $\xi_{\bullet}$ to obtain a family $(C, \sigma_{\bullet})$ of $n$-marked curves extending $(C^{\circ}, \sigma_{\bullet}^{\circ})$. Furthermore, since $\nu^*(L)$ is pulled back from $C^{\circ}$, for each $i \in [2m]$ we obtain an identification $\nu^*(L)|_{\xi_i^{\circ}} \xrightarrow{\sim} \nu^*(L)|_{\xi_{\mathrm{inv}(i)}^{\circ}}$, where we are using the identifications $\xi^{\circ}_i \xrightarrow{\sim} \xi^{\circ}_{\mathrm{inv}(i)}$ to view these as line bundles on the same stack. By Hartogs' theorem, this extends uniquely to an identification $\widetilde{L}|_{\xi_i} \xrightarrow{\sim} \widetilde{L}|_{\xi_{\mathrm{inv}(i)}}$. Since the gluing \cite[Thm. 1.8]{ferrans-pushouts} constructed above is a pushout in the category of quasi-separated stacks, by considering the target stack $B\mathbb{G}_m$ we see that these identifications are exactly the data needed to glue the line bundle $\widetilde{L}$ to a line bundle $L$ on $C$ extending $L^{\circ}$. This yields the desired extension $(C, \sigma_{\bullet}, L)$ over $Y$.
\end{proof}

\section{Relatively polarized curves}

In this section we analyze the stack of relatively polarized curves $\pertgen_g$ introduced in \Cref{D:pertgen}, and we prove the main result, \Cref{T:perturbed_curves_main}.

\subsection{General analysis of relatively polarized curves}

Suppose $(\sC,\cL,s)$ is a degree $0$ anchored curve over an algebraically closed field, and $\sR \subset \sC$ is a rational bridge. We let $\widetilde{\mathscr{C}} := \overline{\left(\mathscr{C} \setminus \mathscr{R}\right)} \subset \mathscr{C}$ be the twisted curve whose coarse space is the partial normalization of $C^{\st}$ obtained by separating the node over which $\sR$ lies.

Recall from the introduction the following terminology: An \emph{orientation} of $\sR$ is a choice of labels $(p_-,p_+)$ for the two points $\widetilde{\sC} \cap \sR$. Given an orientation of $\sR$, we let $\ord_{\pm}(\sR)$ denote the order of vanishing of the section $s^\vee : \cO_{\widetilde{\sC}} \to \cL^\dual|_{\widetilde{\sC}}$ in an \'etale neighborhood of $p_{\pm}$ divided by the order of $\Aut(p_\pm)$. A \emph{relatively polarized curve} is a degree $0$ anchored curve $(\sC,\cL,s)$ such that $\ord_{\pm}(\sR) \leq 1$ and $\ord_+(\sR) + \ord_-(\sR) = \deg(\cL|_{\sR})$ for every rational bridge $\sR \subset \sC$. We denote by $\pertgen_g \subset \bigpert_g$ the substack of relatively polarized curves.

% \begin{defn}\label{L:pertgen}
%     The substack $\pertgen_g \subset \bigpert_g$ consists of families of relatively polarized curves such that for every geometric fiber $(\sC, \cL, s)$ and every rational bridge $\sR \subset \sC$, with some choice of orientation, one has
%     \[
%     \ord_{\pm}(\sR) \leq 1 \quad \text{and} \quad \ord_+(\sR) + \ord_-(\sR) = \deg(\cL|_{\sR}).
%     \]
% \end{defn}

% \dan{I have added this to the introduction}
% \begin{remark} \label{remark: degree on stacky curve}
%     \saranew{We are using the following notion of \emph{degree} of a line bundle on a stacky curve $\mathscr{C}$ over a field $k$. Let $\cN$ be a line bundle on $\mathscr{C}$. Then there exists some positive integer $N$ such that $\cN^{\otimes N}$ descends to the coarse space $C$. The degree of $\cN$ is defined to be $\frac{1}{N} \deg(\cN^{\otimes N})$, where here we let $\deg(\cN^{\otimes N})$ denote the total degree of $\cN^{\otimes N}$ on the prestable curve $C$.}
% \end{remark}
% \sout{By \emph{degree} of an invertible sheaf $\cL$ on an orbifold, we choose an $N$ such that $\cL^N$ descends to the coarse space, and define $\deg(\cL) = \deg(\cL^N) / N$.}

\begin{lemma}\label{L:perturbed_curve_basic_properties}
    For any geometric point $(\sC,\cL,s)$ in $\pertgen_g$, any vanishing point of the cosection $s$ and any orbifold point of $\sC$ must lie on a rational bridge of $\sC$.
\end{lemma}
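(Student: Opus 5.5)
The plan is a degree count. Write $\sC = \widetilde{\sC}_{\mathrm{all}} \cup \bigcup_j \sR_j$. Here $\widetilde{\sC}_{\mathrm{all}}$ is the union of the components of $\sC$ that are not contracted by stabilization; these are exactly the stable components, because $g\ge 2$ and $n=0$ force every contracted subcurve to be a rational bridge. The $\sR_j$ are the rational bridges. Since $\deg$ is additive over components, we have
\[
0 = \deg(\cL) = \deg(\cL|_{\widetilde{\sC}_{\mathrm{all}}}) + \sum_j \deg(\cL|_{\sR_j}).
\]
On each stable component, condition (c) says $s$ is generically an isomorphism, so $s^\vee: \cO \to \cL^\vee$ is a nonzero section there. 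Hence $\deg(\cL^\vee|_{\widetilde{\sC}_{\mathrm{all}}})$ equals the total (orbifold-weighted) vanishing order of $s^\vee$ on $\widetilde{\sC}_{\mathrm{all}}$. The weighting divides the order at a point by $|\Aut|$, as in the definition of $\ord_\pm$, and this is compatible with the paper's definition of $\deg$ through the descent of $\cL^{\otimes N}$. This total splits as
\[
\sum_j \bigl(\ord_+(\sR_j)+\ord_-(\sR_j)\bigr) + (\text{vanishing at points of } \widetilde{\sC}_{\mathrm{all}} \text{ not on bridges}).
\]
Applying the relative polarization condition $\ord_+ + \ord_- = \deg(\cL|_{\sR_j})$ and substituting into the degree identity, the remaining term must be $0$. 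So $s^\vee$, and hence $s$, is nonvanishing at every point of the stable components away from the bridge endpoints. This settles vanishing points on the stable part; every other point lies on a bridge by definition.

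For orbifold points, suppose $p$ is an orbifold point not on any bridge. Then $p$ is a node between stable components, since there are no markings. Near $p$, the section $s$ is an isomorphism $\cL \cong \cO$ by the first part. Then the stabilizer $\mu_r$ acts trivially on the fiber $\cL_p$, because it acts trivially on $\cO$. This contradicts the faithfulness condition (a) unless $r=1$.

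The main obstacle is the bookkeeping at stacky points and endpoints: checking that the orbifold-weighted local orders of $s^\vee$ really sum to $\deg(\cL^\vee|_{\widetilde{\sC}_{\mathrm{all}}})$ under the paper's $\chi$-based definition of degree. To handle this, pass to $\cL^{\otimes N}$, which descends to the coarse space, and compare the vanishing orders of $(s^\vee)^{N}$ there. A node lying on two stable components, or on one component meeting itself, needs its contribution counted on each branch, and each branch gives a nonnegative term, so the positivity argument still goes through.
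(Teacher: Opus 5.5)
Your proposal is correct and follows the same route as the paper: a degree count, followed by faithfulness of the stabilizer action on $\cL_x$. The paper also argues that total degree $0$, together with $\ord_+(\sR)+\ord_-(\sR)=\deg(\cL|_{\sR})$ on every bridge, leaves no room for further zeros of $s^\vee$ on the stable components. It then deduces that $\Aut(x)$ is trivial at any node off the bridges, because $s$ trivializes $\cL$ there. Your additional bookkeeping (dividing orders by $|\Aut|$ via descent of $\cL^{\otimes N}$, and counting each node between stable components branch by branch, e.g.\ on the normalization) only makes explicit details the paper leaves implicit.
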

\begin{proof} The condition $\ord_+(\sR) + \ord_-(\sR) = \deg(\cL|_{\sR})$ implies that if $s$ vanished at any point that was not on a rational bridge, then the total degree of $\cL^{\dual}$ on all of the bridges would be less than the total degree of $\cL^\dual$ on all of the stable components of $\sC$, which would contradict the fact that the total degree of $\cL$ is zero. As a consequence, $s$ trivializes $\cL$ at any node $x$ that is not on a rational bridge, which means that $\Aut(x)$ must be trivial, because it acts faithfully on the fiber $\cL_x$.
\end{proof}

\begin{defn}[Type of a rational bridge]\label{D:type}
    Given a geometric point $(\sC,\cL,s)$ of $\pertgen_g$ and an oriented rational bridge $\sR \subset \sC$, order the components of $\sR = \sS_1 \cup \cdots \cup \sS_m$ such that only adjacent $\sS_i$ meet and $p_- \in \sS_1$. We define the subset $\type(\sR) \subset [-1,1] \cap \bQ$ as the set $\{0,\deg(\cL|_{\sS_1}), \deg(\cL|_{\sS_1 \cup \sS_2}), \ldots, \deg(\cL|_{\sR})\} - \ord_-(\sR)$. 
\end{defn}

Note that $\max(\type(\sR))>0>\min(\type(\sR))$ for any bridge $\sR$.

\begin{lemma}
    Reversing the orientation of $\sR$ has the effect of replacing $\type(\sR) \mapsto - \type(\sR)$. So, for an unoriented rational bridge, $\type(\sR)$ is defined up to flipping signs.
\end{lemma}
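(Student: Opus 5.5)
Reversing the orientation swaps the roles of $p_-$ and $p_+$ and reverses the ordering of the components: with the new orientation the chain is $\sS'_i := \sS_{m+1-i}$, and $p_+$ lies on $\sS'_1 = \sS_m$. The plan is to compute $\type(\sR)$ in the new orientation directly from \Cref{D:type} and compare it with the old one.

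Write $d := \deg(\cL|_{\sR})$, and for $0 \le j \le m$ set $e_j := \deg(\cL|_{\sS_1 \cup \cdots \cup \sS_j})$, so that $e_0 = 0$ and $e_m = d$. In the reversed orientation the partial sums are
\[
e'_j := \deg(\cL|_{\sS_m \cup \cdots \cup \sS_{m+1-j}}) = d - e_{m-j},
\]
by additivity of degree over the components of the chain; for twisted curves this follows from additivity of $\chi$ after passing to $\cL^{\otimes N}$ on the coarse space. Hence $\{e'_0,\ldots,e'_m\} = \{d - e_j : 0 \le j \le m\} = d - \{e_0,\ldots,e_m\}$. The new orientation's $\ord_-$ is the old $\ord_+(\sR)$. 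So
\[
\type'(\sR) = d - \{e_j\} - \ord_+(\sR).
\]

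Next use the defining condition of $\pertgen_g$ (\Cref{D:pertgen}): $\ord_+(\sR) + \ord_-(\sR) = d$. Substituting $d - \ord_+(\sR) = \ord_-(\sR)$ gives
\[
\type'(\sR) = \ord_-(\sR) - \{e_j\} = -\bigl(\{e_j\} - \ord_-(\sR)\bigr) = -\type(\sR).
\]
The final sentence of the statement follows immediately.

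The only point needing care is that the defining equality holds for either orientation. This is clear because it is symmetric in $\pm$, and $\ord_\pm$ depend only on the points, not on the labels. I expect no real obstacle beyond confirming that degree is additive over the components of the chain, including the stacky points at the intersections.
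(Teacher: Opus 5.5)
Your proposal is correct and follows essentially the same approach as the paper. You compute the type in the reversed orientation and use $\ord_+(\sR)+\ord_-(\sR)=\deg(\cL|_{\sR})$ to reach $-\type(\sR)$. The paper uses the same identity to rewrite $\type(\sR)$ in the original orientation as $\{-\deg(\cL|_{\sS_{j+1}\cup\cdots\cup\sS_m})\}_j+\ord_+(\sR)$, which is the negative of the reversed-orientation type.
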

\begin{proof}
    The condition that $\ord_-(\sR) + \ord_+(\sR) = \deg(\cL|_{\sR})$ implies that the definition above is equivalent to $\type(\sR) = \{-\deg(\cL|_{\sR}),-\deg(\cL|_{\sS_2 \cup \cdots \cup \sS_m}), \ldots, -\deg(\cL|_{\sS_m}), 0\} + \ord_+(\sR)$.
\end{proof}

\begin{lemma} \label{L:type_determines_isomorphism_class}
    Let $(\sC,\cL,s)$ be a geometric point of $\pertgen_g$. Let $\{\sR_i\}_{i=1}^{\ell}$ be collection of bridges of $\sC$, and choose an orientation $(p^{(i)}_-, p^{(i)}_+)$ for each $\sR_i$. Then, the following hold:
    \begin{enumerate}[(1)]
        \item Letting $\widetilde{\sC}^{st} = \overline{\sC \setminus \sqcup_{i} \sR_i}$ be the union of stable components, and defining the Cartier divisor in $\widetilde{\sC}^{st}$,
    \[\mathscr{D} := \sum_{i=1}^\ell \left(\ord_-(\cR_{i})p_-^{(i)} + \ord_+(\cR_{i})p_+^{(i)}\right),\]
    the cosection $s$ is the composition $\cL \twoheadrightarrow \cL|_{\widetilde{\mathscr{C}}^{\st}} \xrightarrow{\psi} \cO_{\widetilde{\sC}^{st}}(-\mathscr{D}) \hookrightarrow \cO_{\mathscr{C}}$ for some isomorphism $\psi: \cL|_{\widetilde{\mathscr{C}}^{\st}} \xrightarrow{\sim} \cO_{\widetilde{\sC}^{st}}(-\mathscr{D})$.
    \item Choose a bridge $\sR \subset \sC$ and consider the tuple $\type(\sR) = \left( \frac{a_j}{b_j}\right)_{j=0}^m$, where we express each term in reduced fraction form. Then the $j^{th}$ irreducible component $\sS_j$ is isomorphic to the stack $\mathbb{P}^1\left[\frac{1}{b_{j-1}} \mid \frac{1}{b_j}\right]$ obtained from $\mathbb{P}^1$ by taking the root $b_{j-1}^{th}$-root stack at $0$ and the $b_j^{th}$-root stack at $\infty$. Under this identification, we have $\cL|_{\sS_j} \cong \cO(\frac{a_j}{b_j} \cdot [\infty]- \frac{a_{j-1}}{b_{j-1}} \cdot [0])$. In other words, the bridge $\sR$ and line bundle $\cL|_{\sR}$ are as pictured in \Cref{fig: first picture combinatorial proof} below.

      \begin{figure}[H]
    \centering
    \caption{}
    \vspace{0.5cm}
    \begin{circuitikz}

        \node [font=\Large] at (-2,0) {$\mathscr{R}=$}; 
        
        \draw [ fill={rgb,255:red,255;
        green,235; blue,235} , line width=1pt ] (0,0) ellipse (1 and 0.5) node [font = \small]
        {} ; \node  at (0,1) {$\mathscr{S}_1$}; 

        \node at (-1,0) {$\bullet$};

        \node at (-1,-0.7) {$\mu_{b_0}$};
        
        \draw [
        fill={rgb,255:red,255; green,235; blue,235} , line width=1pt ] (2,0)
        ellipse (1 and 0.5)
        node [font = \small] {} ; \node  at (2,1) {$\mathscr{S}_2$};

        \node at (1,0) {$\bullet$};

        \node at (1,-0.7) {$\mu_{b_1}$};

        \draw[ fill={rgb,255:red,255; green,235; blue,235} , line width=1pt ]
        (4,0.5) arc(90:270: 1 and 0.5); \node  at (4,1) {$\mathscr{S}_3$};

        \node at (3,0) {$\bullet$};

        \node at (3,-0.7) {$\mu_{b_2}$};

        \node  at (5,0) {$\dots$};

        \draw[ fill={rgb,255:red,255; green,235; blue,235} , line width=1pt ]
        (6,-0.5) arc(-90:90: 1 and 0.5); \node  at (6,1) {$\mathscr{S}_{m-1}$};

        \draw [ fill={rgb,255:red,255; green,235; blue,235} , line width=1pt ]
        (8,0) ellipse (1 and 0.5) node [font = \small] {} ; \node  at (8,1)
        {$\mathscr{S}_m$};

        \node at (7,0) {$\bullet$};

        \node at (7,-0.7) {$\mu_{b_{m-1}}$};

        \node at (9,0) {$\bullet$};

        \node at (9,-0.7) {$\mu_{b_{m}}$};

        \node [font=\Large] at (-2,-2) {$\cL|_{\mathscr{R}} =$}; 

        \node  at (0.8,-2) {$\cO( \frac{a_1}{b_1} \cdot [\infty] - \frac{a_0}{b_0}\cdot[0])$};

        \node  at (5,-2) {$\cdots$};

        \node  at (8,-2) {$\cO(\frac{a_m}{b_m} \cdot [\infty]- \frac{a_{m-1}}{b_{m-1}}\cdot[0])$};

        \node at (7,0) {$\bullet$};

        \node at (7,-0.7) {$\mu_{c_{m-1}}$};
    \end{circuitikz}  
    \label{fig: first picture combinatorial proof}
\end{figure}
    \end{enumerate}
\end{lemma}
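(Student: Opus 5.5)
For part (1), I will follow the argument of \Cref{L:perturbed_curve_basic_properties} and make it quantitative. On the union $\widetilde{\sC}^{st}$ of stable components, condition (c) of \Cref{D:rigid_polarized_curves} says that the dual section $s^\vee|_{\widetilde{\sC}^{st}}$ is a nonzero section of $\cL^\vee|_{\widetilde{\sC}^{st}}$ on every component. So its divisor of zeros $\mathscr{E}$ is an effective Cartier divisor, and $\deg(\cL^\vee|_{\widetilde{\sC}^{st}}) = \deg \mathscr{E}$. Here degrees are measured with the stacky convention, so a point with stabilizer of order $r$ and local order $k$ contributes $k/r$.

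By definition of $\ord_\pm$, the part of $\mathscr{E}$ supported at the points $p^{(i)}_\pm$ is exactly $\mathscr{D}$, and $\mathscr{E}-\mathscr{D}$ is effective. Now consider the bridges not among the $\sR_i$. Each such bridge also meets $\widetilde{\sC}^{st}$, and the relation $\ord_++\ord_-=\deg(\cL|_{\sR})$ holds for every bridge. Degree zero gives
\[
0=\deg\cL=\sum_{\text{all bridges}}\deg(\cL|_{\sR})-\deg \mathscr{E}.
\]
This forces $\mathscr{E}$ to equal the sum of the contributions $\ord_\pm$ over all bridges. To set this up cleanly, I will first take $\{\sR_i\}$ to be all bridges, so that $\mathscr{E}=\mathscr{D}$. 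When only some bridges are chosen, I will reduce to this case: a bridge not in the collection is simply part of the curve, and $s$ vanishes at its endpoints only to the prescribed orders. I need to check that this still gives the stated factorization. With $\mathscr{E}=\mathscr{D}$, $s^\vee$ induces an isomorphism $\cO(\mathscr{D})\cong \cL^\vee|_{\widetilde{\sC}^{st}}$, and dualizing gives $\psi$.

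The factorization $\cL\to\cL|_{\widetilde{\sC}^{st}}\to\cO_{\widetilde{\sC}^{st}}(-\mathscr{D})\hookrightarrow\cO_\sC$ also requires that $s|_{\sR}$ vanishes on every bridge, so that $s$ lands in the ideal of $\sR$. I will argue this from degrees. By \Cref{D:rigid_polarized_curves}(b), $\cL$ has positive degree on some component of each bridge; a nonzero cosection on a component of positive degree is impossible. Then I will propagate along the chain: components of degree $\le 0$ that are adjacent to zero-section components must carry a section vanishing at the node, and a degree-$0$ component would have to be contracted, contradicting ampleness. I expect this propagation to be the fiddly step. The cleaner route may be to note that $s$ lands in $\cO_{\widetilde{\sC}^{st}}(-\mathscr{D})$, which has fibers compatible with zero at $p_\pm$ only when $\ord_\pm>0$. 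If some $\ord_-=0$ I need a separate argument, namely that representability forces nonvanishing there.

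For part (2), I will compute the stabilizers and restrictions componentwise. The partial sums $\deg(\cL|_{\sS_1\cup\dots\cup\sS_j})-\ord_-$ are, by the gluing, the local "degree labels" at the node $x_j$. At each node with stabilizer $\mu_r$, representability (condition (a)) says the character of $\cL_x$ generates $\mu_r^\vee$. The character is determined by the fractional part of the degree accumulated so far, so $r$ equals the reduced denominator $b_j$.

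Each component $\sS_j$ is a twisted $\bP^1$ with exactly two orbifold points, hence isomorphic to $\bP^1[\frac1{b_{j-1}}\mid\frac1{b_j}]$. Its Picard group is generated by $\cO(\frac1{b_{j-1}}[0])$ and $\cO(\frac1{b_j}[\infty])$ modulo the relation $\cO([0])\cong\cO([\infty])$. A line bundle there is therefore determined by its degree together with its characters at the two points. Matching these invariants with $\cO(\frac{a_j}{b_j}[\infty]-\frac{a_{j-1}}{b_{j-1}}[0])$ is then a consistency check, and it uses part (1) at the endpoint $p_-$ to fix the initial character as $-\ord_-$.
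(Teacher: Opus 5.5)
Your plan is essentially the paper's argument. Part (1) is the quantitative form of \Cref{L:perturbed_curve_basic_properties}: degree zero together with $\ord_++\ord_-=\deg(\cL|_{\sR})$ forces the zero divisor of $s^\vee$ on $\widetilde{\sC}^{st}$ to be exactly $\mathscr{D}$, and this is combined with the vanishing of $s$ on the bridges. Part (2) is the same induction along the chain as in the paper: it uses the Picard group of $\bP^1[\frac{1}{d}\mid\frac{1}{d'}]$ and faithfulness at each node to pin down the stabilizer orders $b_j$ and the restrictions $\cL|_{\sS_j}$, starting from the character $-\ord_-$ at $p_-$.

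One step needs correcting: the vanishing of $s$ on the bridges. Condition (b) says $\cL^{\otimes N}$ is ample relative to $C\to C^{\st}$, i.e.\ ample on the entire fiber over each node. So $\cL$ has positive degree on \emph{every} irreducible component of every bridge, not just on some component. Hence $s|_{\sS_j}$, a section of the negative-degree bundle $\cL^\vee|_{\sS_j}$, is zero on each component directly.

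The propagation you anticipate is therefore unnecessary, and as you describe it, it would not work anyway. Knowing that a section of $\cL^\vee$ on a component of negative $\cL$-degree vanishes at a node does not make it vanish identically.

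It also follows that $s$ vanishes at both endpoints of every bridge. So $\ord_\pm(\sR)>0$ always, the case $\ord_-=0$ never arises, and no separate argument is needed there. Positivity of $\ord_\pm$ is also what guarantees $\cO_{\widetilde{\sC}^{st}}(-\mathscr{D})$ sits inside the ideal of the bridges, so the inclusion into $\cO_{\sC}$ makes sense.

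Finally, $\{\sR_i\}$ must be the collection of all bridges for $\widetilde{\sC}^{st}$ to be the union of stable components. For a proper subcollection the statement is false, because $s$ vanishes identically on an omitted bridge. So drop the proposed reduction and simply take all bridges.
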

\begin{proof}
    Part (1) is a direct consequence of \Cref{L:perturbed_curve_basic_properties} and the definition of $\ord_-(\sR_i)$ and $\ord_+(\sR_i)$ as orders of vanishing of the cosection at the endpoints of the bridge $\sR_i$, using that the relative ampleness of $\cL|_{\sR_i}$ forces the cosection to vanish on every rational bridge $\sR_i$.

    For Part (2), we note that we must have $\mathscr{S}_j \cong \mathbb{P}^1\left[ \frac{1}{d_{j-1}} \mid \frac{1}{d_j}\right]$ for some tuple of positive integers $(d_j)_{j=0}^m$. We use freely the description of the Picard group of $\mathbb{P}^1\left[ \frac{1}{d_{j-1}} \mid \frac{1}{d_j}\right]$ given in \cite[Prop. 2.2]{martens2015variationsthemegrothendieck}, and the fact that it is generated by the line bundles $\cO(\frac{1}{d_{j-1}}\cdot [0])$ and $\cO(\frac{1}{d_j}\cdot[\infty])$. By the definition of $\ord_-(\sR_i)$ and the fact that the isotropy group of $p^i_-$ acts faithfully on the fiber of $\cL$, it follows the isotropy group of $p^i_- = 0 \in \mathscr{S}_1$ is $\mu_{b_0}$ and the corresponding character of the fiber of $\cL$ is the same as for $\cO(-\frac{a_0}{b_0}\cdot [0])$. Since the degree of $\cL|_{\mathscr{S}_1}$ is $\frac{a_1}{b_1}- \frac{a_0}{b_0}$ and the isotropy group of $\infty \in \mathscr{S}_1$ acts faithfully on the fiber of $\cL$, this forces the isotropy group of $\infty$ to be $\mu_{b_1}$ and $\cL|_{\mathscr{S}_1}\cong \cO(\frac{a_1}{b_1} \cdot [\infty] - \frac{a_0}{b_0} \cdot [0])$. It follows that the isotropy group of $0 \in \mathscr{S}_2$ is $\mu_{b_1}$ and the corresponding character of the fiber of $\cL$ agrees with the one for $\cO(-\frac{a_1}{b_1} \cdot [0])$. Using that the degree of $\cL|_{\mathscr{S}_2}$ is $\frac{a_2}{b_2} - \frac{a_1}{b_1}$, we conclude that $\mathscr{S}_2 \cong \mathbb{P}^1\left[ \frac{1}{b_{1}} \mid \frac{1}{b_2}\right]$ and $\cL|_{\mathscr{S}_2}  \cong \cO(\frac{a_2}{b_2} \cdot [\infty] - \frac{a_1}{b_1} \cdot [0])$. Iterating this procedure yields the desired result for all components $\mathscr{S}_j$.
\end{proof}

\subsubsection*{Rational models for nodes}

Let $P^{\rm{un}} \to \bA^1$ denote the versal deformation of the nodal curve $\bP^1 \cup \bP^1$, i.e., it is an isotrivial degeneration of one line to two lines meeting at a node $x \in P^{\rm{un}}$. The family $P^{\rm{un}} \to \bA^1$ can be realized as the composition of the blow-up $B\to \bP^1\times \bA^1$ at a point, with the second projection $\bP^1\times \bA^1\to \bA^1$. We label the components of the zero fiber as $P^{\rm{un}}_0 = P^{\rm{un},-}_{0} \cup P^{\rm{un},+}_0$, which equips any rational bridge in a twisted curve $\sP \to P_0^{un}$ with an orientation.

The versality of the family $P^{\rm{un}} \to \bA^1$ means the following: For any family of nodal curves $C \to \Spec(R)$ for a complete local Noetherian algebra $R$ with residue field $k$, if $x \in C(k)$ is a node lying over the closed point, then there is a morphism $\Spec(R) \to \bA^1$ taking the closed point to $0$ and an isomorphism $\varphi : \widehat{C} \cong \widehat{P_R}$ between the formal completion of $C$ at $x$ and the formal completion of $P_R := \Spec(R) \times_{\bA^1} P^{\rm{un}}$ at the unique node over the closed point of $\Spec(R)$ \cite[\href{https://stacks.math.columbia.edu/tag/0CBX}{Tag 0CBX}]{stacks-project}. We call this data a \emph{rational model} around the node $x \in C(k)$. The preimage of the node in $P^{\rm{un}}_0$ is a closed subscheme $Z \hookrightarrow P_R$ such that composing with the projection to $\Spec(R)$ is also a closed immersion $Z \hookrightarrow \Spec(R)$, hence $\cO_Z$ is $\pi$-adically complete, and therefore $\varphi$ identifies $Z$ with a closed subscheme of $C$ as well.

The following lemma applies to a complete local Noetherian algebra $R$ with residue field $k$, a family of twisted curves $\sC \to Y$ over a Noetherian $R$-scheme $Y$, an invertible sheaf $\cL$ on $\sC$, and a cosection $s : \cL \to \cO_\sC$. We suppose that there is a stable family of nodal curves $C^{\st} \to \Spec(R)$ and a morphism $\pi : \sC \to C^{\st}$ such that the morphism $\sC \to Y \times_{\Spec(R)} C^{\st}$ is the stabilization in every fiber over $Y$.

\begin{lemma}\label{L:rational_model_type_calculation}
    Let $x \in C^{\st}(k)$ be a node, and let $\Spec(R) \to \bA^1$ and $\varphi : \widehat{C^{\st}} \cong \widehat{P_R}$ be a rational model around $x$. Assume that $V(s) \setminus \pi^{-1}(Z) \subset |\sC|$ is closed. Then the pushout
    \[\xymatrix{\sC \times_{C^{\st}} (\Spec(\widehat{\cO}_{C^{\st},x}) \setminus Z)  \ar[r]^-{\varphi} \ar[d] & Y \times_{\Spec(R)} (P_R \setminus Z) \ar[d] \\ \sC \times_{C^{\st}} \Spec(\widehat{\cO}_{C^{\st},x}) \ar[r] & \sP}\]
    exists in the category of algebraic stacks with affine diagonal, and $\sP$ is a family of twisted chains of rational curves over $Y$. Furthermore:
    \begin{itemize}
    \item $\cL$ and $s$ glue uniquely with $(s = 1 : \cO_{P_R \setminus Z} \cong \cO_{P_R \setminus Z})$ to obtain a line bundle $\cL_\sP$ and cosection $s_\sP : \cL_\sP \to \cO_\sP$ on $\sP$.
    \item For every $y \in Y$, the choice of orientation for $P_0^{\rm{un}}$ induces an orientation for the rational bridge $\sR_y := \sC_y \cap \pi^{-1}(Z) \subset \sC_y$ and picks out a distinguished end component of the fiber $\sP_y^- \subset \sP_y$, and one has
    \begin{gather*}
        %\type(\sR_y) = \{\deg(\cL|_{\sP_y^{-}}),\deg(\cL|_{\sP_y^{-} \cup \sS_1}), \deg(\cL|_{\sP_y^{-} \cup \sS_1 \cup \sS_2}), \ldots, \deg(\cL|_{\sP_y^{-} \cup \sR_y})\}.
        \ord_\pm(\sR_y) = -\deg(\cL_{\sP}|_{\sP_y^\pm}) \text{ and}\\
        \type(\sR_y) = \left\{\deg(\cL_\sP|_{\sS}) \mid \text{connected proper subcurves } \sS \subsetneq \sP_y \text{ that contain } \sP_y^- \right\}.
    \end{gather*}
    \end{itemize}
\end{lemma}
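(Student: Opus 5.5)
The plan has three parts: construct $\sP$ as a gluing along an open cover, glue $(\cL,s)$ by Hartogs-type arguments, and compute degrees by splitting the total degree of $\cL_\sP$ on a fiber into contributions from the bridge and the two end components.

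\textbf{Construction of $\sP$.} Set $U := \sC \times_{C^{\st}} \Spec(\widehat{\cO}_{C^{\st},x})$. Let $U^\circ \subset U$ be the complement of $\pi^{-1}(Z)$, and $V := Y \times_{\Spec(R)} (P_R \setminus Z)$. Since $\pi$ is an isomorphism away from the preimage of the node locus, which lies inside $\pi^{-1}(Z)$ near $x$, the map $\varphi$ identifies $U^\circ$ with an open substack of $V$. This substack is the preimage in $V$ of the formal neighborhood of the node. The key point is that $U^\circ \to V$ and $U^\circ \to U$ are both flat monomorphisms. I would therefore construct the pushout by a Ferrand/Nisnevich-type gluing, as in \cite[Thm. 1.8]{ferrans-pushouts}, or by the Mayer--Vietoris/Beauville--Laszlo result of \Cref{appendix} (cf. \Cref{P:nisnevich_pushout}).

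Concretely, $\sP$ is covered by $U$ and $V$ in the sense of a "formal-open" cover: $U$ is the formal completion along $\pi^{-1}(Z)$ and $V$ is the complement. The pushout in stacks with affine diagonal exists because these form a flat Nisnevich-type square. Fibers over $y\in Y$ are then obtained by gluing the two end lines of $P^{\rm un}$, minus the node, onto the bridge $\sR_y$. Hence they are twisted chains of rational curves: $\sP_y = \sP_y^- \cup \sR_y \cup \sP_y^+$, where $\sP_y^\pm$ are the closures of the strict transforms of $P_0^{{\rm un},\pm}\setminus Z$. The orientation comes from the labels $\pm$. Flatness and properness over $Y$ are checked on the two pieces.

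\textbf{Gluing $\cL$ and $s$.} This is where the hypothesis that $V(s)\setminus\pi^{-1}(Z)$ is closed enters. On $U^\circ$, closedness together with the completion at $x$ forces $s$ to be nonvanishing: any vanishing locus off $\pi^{-1}(Z)$ near $x$ would have to accumulate on $\pi^{-1}(Z)$. So $s|_{U^\circ}$ is an isomorphism $\cL|_{U^\circ}\cong \cO$, which matches $(\cO_V, 1)$. By the universal property of the pushout applied to the target $\bA^1/\bG_m$, the pair glues to $(\cL_\sP, s_\sP)$. Uniqueness holds because $s$ is an isomorphism on the overlap, so the gluing isomorphism is forced.

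\textbf{Degree formulas.} On $\sP_y^\pm$, the cosection $s_\sP$ is $1$ away from the attaching point $p_\pm$. Its vanishing order there is the vanishing order of $s^\vee$ at $p_\pm$ on the stable component, measured on the stacky point. Thus $\deg(\cL_\sP|_{\sP_y^\pm}) = -\ord_\pm(\sR_y)$; one has to check that the normalization by $|\Aut(p_\pm)|$ matches the stacky degree convention. For the type, write $\sR_y = \sS_1\cup\dots\cup\sS_m$. The connected proper subcurves containing $\sP_y^-$ are $\sP_y^-\cup\sS_1\cup\dots\cup\sS_j$ for $0\le j\le m$. Their degrees are $-\ord_-(\sR_y)+\deg(\cL|_{\sS_1\cup\dots\cup\sS_j})$, which is exactly \Cref{D:type}.

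\textbf{Main obstacle.} I expect the main obstacle to be the existence of the pushout with affine diagonal and the fact that its fibers are the expected twisted chains, which requires verifying that the formal gluing is effective over a general Noetherian $Y$.
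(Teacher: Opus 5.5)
Your proposal is essentially the paper's proof: \Cref{P:nisnevich_pushout} for the pushout; the universal property against $\bA^1/\bG_m$, using that $s$ is invertible on the upper-left corner, for $(\cL_{\sP},s_{\sP})$; the equality of orbifold order of vanishing at the single zero with orbifold degree for $\ord_{\pm}(\sR_y)=-\deg(\cL_{\sP}|_{\sP_y^{\pm}})$; and the decomposition $\sP_y=\sP_y^{-}\cup\sR_y\cup\sP_y^{+}$ for the type. Two corrections: first, $U^\circ\to V$ is not an open immersion or a monomorphism but the base change to $Y$ of the flat map $\Spec(\widehat{\cO}_{C^{\st},x})\setminus Z\to P_R\setminus Z$ induced by $\varphi$, so the correct picture is that $U$ is a modification of $Y\times_{\Spec(R)}\Spec(\widehat{\cO}_{C^{\st},x})$ over $Z$ descended along a flat Mayer--Vietoris square, and not a Ferrand pinching; second, your accumulation argument for the invertibility of $s|_{U^\circ}$ (a step the paper simply asserts) needs the image of the closed set $V(s)\setminus\pi^{-1}(Z)$ in $C^{\st}$ to be closed, which holds for $Y=\Spec(R)$ (where $\sC\to C^{\st}$ is proper) and for the $\bG_m$-equivariant families over the test surfaces where the lemma is applied (since $Y/\bG_m\to\Spec(R)$ is universally closed), but fails for an arbitrary Noetherian $Y$, e.g.\ for $Y=\Spec(R[t])$, $\sC=Y\times_{\Spec(R)}C^{\st}$ with $C^{\st}$ regular, and $s$ vanishing exactly along the closure of $V(ta-1)$ for a local function $a$ vanishing at $x$, a divisor which avoids $\pi^{-1}(Z)$ but meets the upper-left corner.
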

\begin{proof}
    The existence of the pushout is \Cref{P:nisnevich_pushout}, and it is a twisted curve because the pushout description itself can be used to give an \'etale local identification of $\sP$ with families of twisted curves over $Y$. The existence and uniqueness of the gluing $s_\sP : \cL_\sP \to \cO_{\sP}$ follows from the universal property of pushouts, applied to maps to $\bA^1 / \bG_m$ -- the key fact is that $s$ is nowhere vanishing when restricted to the upper-left corner of the pushout diagram. We have $\ord_-(\sR_y) = -\deg(\cL_\sP|_{\sP_y^-})$, because if a section of a line bundle on an orbifold curve vanishes at a single point, then the orbifold notion of order of vanishing agrees with the orbifold notion of degree for the line bundle. The final expression for $\type(\sR_y)$ results from the fact that $\sP_y \cong \sP_y^- \cup \sR_y \cup \sP_y^+$ as polarized chains.
\end{proof}

The new interpretation of $\type(\sR_y)$ in terms of degrees of $\sL$ on the rational model is the key idea leading to all of the results below.

\subsection{Openness}

\begin{lemma}\label{L:pertgen_closed_under_generization}
Let $(\sC,\cL,s)$ be an $R$-point of $\bigpert_g$, where $R$ is a complete DVR. If the special fiber lies in $\pertgen_g$, then the whole family lies in $\pertgen_g$.    
\end{lemma}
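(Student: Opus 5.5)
Since $\pertgen_g$ is defined by a condition on geometric fibers, it suffices to verify the condition on the geometric generic fiber. We use the rational-model description of \Cref{L:rational_model_type_calculation} to relate the bridges of the generic fiber to those of the special fiber, where the condition holds by assumption.

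First, the degree is locally constant, so the generic fiber has degree $0$. Let $C^{\st} \to \Spec(R)$ be the stabilization of the family. After passing to an extension of the (complete) DVR, we may assume the residue field is algebraically closed, and that every rational bridge of the geometric generic fiber is already defined over $K$. A rational bridge of the generic fiber $\sC_K$ lies over a node of $C^{\st}_K$. The closure of that node is a section of the nodal locus of $C^{\st}$, and it specializes to a node $x$ of the special fiber. Fix such an $x$ and choose a rational model $\Spec(R) \to \bA^1$, $\varphi$ around $x$. If the node $x$ is smoothed generically, then no generic bridge lies over it, and there is nothing to check there.

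The vanishing locus $V(s)$ meets the special fiber only at points on rational bridges, by \Cref{L:perturbed_curve_basic_properties} applied to the special fiber. So, after shrinking to a neighborhood, $V(s) \setminus \pi^{-1}(Z)$ is closed and supported away from the fiber over $x$. Hence \Cref{L:rational_model_type_calculation} applies with $Y = \Spec(R)$. It produces a flat family of twisted chains $\sP \to \Spec(R)$ with line bundle $\cL_\sP$ and cosection $s_\sP$, and this family records $\ord_\pm$ and $\type$ on both fibers. The key observation is that $s_\sP$ is an isomorphism on the end components $\sP^\pm$ away from finitely many points. The vanishing orders $\ord_\pm = -\deg(\cL_\sP|_{\sP^\pm})$ are degrees on the end components of the chain. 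In the generic fiber, the end components of $\sP_K$ specialize to the end components $\sP_0^\pm$ of the special chain together with possibly some adjacent components. This is where the main obstacle lies, and I would handle it as follows.

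\begin{itemize}
\item The ends $\sP^\pm$ are the strict transforms of the two branches $P^{\rm{un},\pm}_0$, so they specialize onto $\sP_0^\pm$ union some subchain of the special bridge $\sR_0$.
\item By flatness, $\deg(\cL_\sP|_{\sP_K^-})$ equals the degree of $\cL_\sP$ on a connected proper subcurve $\sS \supset \sP_0^-$ of $\sP_0$. By \Cref{L:rational_model_type_calculation}, this degree is $-t$ for some element $t$ of $\type(\sR_0)$.
\item The effectivity of $s_\sP^\vee$, namely that $s^\vee$ vanishes only at the attaching points, forces $-\deg(\cL_\sP|_{\sP_K^-}) \ge 0$.
\item Since $\type(\sR_0) \subset [-1,1]$, we get $\ord_-(\sR_K) \le 1$, and symmetrically $\ord_+(\sR_K) \le 1$.
\end{itemize}

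Finally, $\deg(\cL_\sP|_{\sP_y}) = 0$ on the whole chain for every $y$, since the chain is a model of the trivialized complement. Therefore
\[
\deg(\cL|_{\sR_K}) = -\deg(\cL_\sP|_{\sP_K^-}) - \deg(\cL_\sP|_{\sP_K^+}) = \ord_-(\sR_K) + \ord_+(\sR_K),
\]
which is the second condition in \Cref{D:pertgen}.

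If several generic bridges specialize to the same special bridge, they correspond to distinct nodes of $C^{\st}_K$, and each is treated with its own rational model. The hard step is justifying that the generic end components specialize to subcurves containing $\sP_0^\pm$, so that their degrees land in $-\type(\sR_0)$. I would check this using the fact that the total space of $\sP$ is the pushout of the trivial family $P_R$ away from $Z$, so that the strict transforms of the branches of $P^{\rm{un}}_0$ are controlled.
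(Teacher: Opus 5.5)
There is a genuine gap where you check the hypothesis of \Cref{L:rational_model_type_calculation}. Knowing from \Cref{L:perturbed_curve_basic_properties} that $V(s)$ meets the special fiber only inside rational bridges does not make $V(s)\setminus\pi^{-1}(Z)$ closed.

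A priori, the generic cosection $s_K$ may vanish at points of the stable components of $\sC_K$, off $\sR_K$, whose limits lie in the special bridge $\sR_0\subset\pi^{-1}(Z)$. These points lie in $V(s)\setminus\pi^{-1}(Z)$, but their specializations do not. No shrinking around $x$ removes them, since they sit in exactly the formal neighborhood that the rational model uses. You also cannot exclude them by applying \Cref{L:perturbed_curve_basic_properties} to $\sC_K$, because that presupposes $\sC_K$ lies in $\pertgen_g$, which is what you are proving.

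When such zeros exist, $s$ vanishes somewhere on $\sC\times_{C^{\st}}(\Spec(\widehat{\cO}_{C^{\st},x})\setminus Z)$. Then $\cL$ and $s$ cannot be glued to the trivial cosection, and the model $(\sP,\cL_\sP,s_\sP)$ you rely on is not available. The correct relation is instead $\ord_\pm(\sR_K)=\ord_\pm(\sR_0)-\deg(\cL|_{\sR_0^\pm})-\deg(D_\pm)$. Here $\sR_0^\pm$ are the two pieces of $\sR_0$ on either side of the limit of $\sR_K$, and $D_\pm$ are the extra zeros of $s_K$ specializing into $\sR_0^\pm$.

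Excluding these zeros is the heart of the lemma, and it is where the hypothesis $\ord_\pm(\sR_0)\le 1$ is really needed. In your argument that hypothesis only serves to bound $\ord_\pm(\sR_K)$, which signals the omission. The paper proceeds in four steps:
\begin{enumerate}
\item It modifies the family so that $\overline{D_\pm}$ meets the smooth locus of the special fiber.
\item It factors $s=s'\cdot f$ with $f:\cO_{\sC}(-\overline{D_\pm})\to\cO_{\sC}$.
\item It applies \Cref{L:rational_model_type_calculation} to $(\sC,\cL(\overline{D_\pm}),s')$, for which the hypothesis does hold, and obtains the formula above.
\item It observes that $\ord_\pm(\sR_K)>0$, $\deg(\cL|_{\sR_0^\pm})\ge 0$, $\ord_\pm(\sR_0)\le 1$ and $\deg(D_\pm)\in\bZ_{\ge 0}$ together force $D_\pm=\emptyset$.
\end{enumerate}
After that, your computation goes through. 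The step you single out as hard, that the closure of $\sP_K^\pm$ specializes to $\sP_0^\pm\cup\sR_0^\pm$, is routine.

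Two minor points. First, $\deg(\cL_\sP|_{\sP_K})=0$ is not automatic: it follows from the special-fiber identity $\ord_-(\sR_0)+\ord_+(\sR_0)=\deg(\cL|_{\sR_0})$ together with local constancy of degree. Second, the degrees of subcurves containing $\sP_0^-$ are elements of $\type(\sR_0)$ rather than their negatives; this is harmless for the bound.
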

\begin{proof}
We denote by $\eta$ and $0$ the generic and special points of $\Spec(R)$, respectively. Let $\sR_\eta \subset \sC_\eta$ be a rational bridge, and let $\sS := \overline{\sR_\eta} \cap \sC_0 \subset \sC_0$ be its flat limit. Let $\sR_0 \subset \sC_0$ be the rational bridge that contains $\sS$, oriented compatibly with $\sR_\eta$. It decomposes as $\sR_0 = \sR_0^- \cup \sS \cup \sR_0^+$. Let $D_\pm \subset V(s_\eta) \setminus \sR_\eta \subset \sC_\eta$ be the part of the vanishing locus whose limit in $\sC_0$ lies in $\sR^\pm_0\setminus \sS$. This part of the vanishing locus must be a collection of isolated points lying over the smooth locus of the stabilization of $\sC$, so we may regard $D_-$ and $D_+$ as effective divisors on $\sC_\eta$. The lemma follows from the following key formula
\begin{equation}\label{E:order_inequality}
    \ord_\pm(\sR_\eta) = \ord_\pm(\sR_0) - \deg(\cL|_{\sR_0^{\pm}}) - \deg(D_\pm).
\end{equation}

To see why this implies the lemma, the hypothesis that the special fiber lies in $\pertgen_g$ implies that $\ord_\pm(\sR_0) \leq 1$. Because $\ord_\pm(\sR_\eta) \geq 0$ and $\deg(D_\pm) \in \bZ_{\geq 0}$, this implies that $\deg(D_\pm) = 0$, i.e., $D_\pm$ is empty -- this is the main point where the restriction $\ord_\pm(\sR) \leq 1$ in \Cref{D:pertgen} is needed. \eqref{E:order_inequality} also implies that $\ord_\pm(\sR_\eta) \leq \ord_\pm(\sR_0) \leq 1$. Finally, adding both versions of \eqref{E:order_inequality}, and using the equality $\deg(\cL|_{\sR_\eta}) = \deg(\cL|_{\sS})$, we have
\[
\ord_-(\sR_\eta) + \ord_+(\sR_\eta) - \deg(\cL|_{\sR_\eta}) = \ord_-(\sR_0) + \ord_+(\sR_0) - \deg(\cL|_{\sR_0^- \cup \sS \cup \sR_0^+}) = 0.
\]
Applying these two conclusions to every rational bridge in $\sC_\eta$ shows that the generic fiber lies in $\pertgen_g \subset \bigpert_g$.

To prove \eqref{E:order_inequality}, we can choose another family of twisted curves $\sC' \to \Spec(R)$ and a map $\sC' \to \sC$ that is an isomorphism over the generic fiber and contracts rational subchains in the special fiber, and such that the closure of $D_\pm$ meets the smooth locus of $\sC'_0$. %\dan{I am pretty sure this last thing is true, but it deserves more explanation.}
It suffices to prove \eqref{E:order_inequality} for the pullback of $s : \cL \to \cO_{\sC}$ to $\sC'$, so we replace $\sC$ with $\sC'$ and assume the closure of $D_\pm$ lies in the smooth locus of $\sR_0^\pm$.

The fact that $s$ vanishes along $D$ to the correct orders implies that we can factor $s = s' \cdot f$ for the canonical $f : \cO_{\sC}(-\overline{D}_\pm) \to \cO_{\sC}$. By construction, $s' : \cL(\overline{D}_\pm) \to \cO_{\sC}$ is a cosection that does not vanish at any point in the generic fiber whose limit lies in $\sR_0$. We may therefore apply \Cref{L:rational_model_type_calculation} to the family $(\sC,\cL(\overline{D}_\pm),s')$ over $Y = \Spec(R)$, to produce an oriented family of rational chains $\sP \to \Spec(R)$ that models the degeneration of bridges. Also, because the cosection $f$ is nonvanishing at the endpoints of $\sR_0$ and $\sR_\eta$, the value of $\ord_{\pm}$ is the same for this new family of curves. The closure of the end components $\sP_\eta^\pm$ of the generic fiber is a flat degeneration of $\sP_\eta^\pm$ to $\sR_0^\pm \cup \sP_0^\pm$. The equation \eqref{E:order_inequality} then follows from the fact that $\deg(\cL(\overline{D_\pm})|_{\sP^\pm_\eta}) = -\ord_{\pm}(\sR_\eta)$ and $\deg(\cL(\overline{D_\pm})|_{\sP_0^{\pm}}) = -\ord_\pm(\sR_0)$.
\end{proof}

\begin{lemma}\label{L:types_for_degenerations}
Suppose $(\sC,\cL,s)$ is an $R$ point of $\pertgen_g$ for a complete DVR $R$. If $\sR_\eta \subset \sC_\eta$ is a rational bridge in the generic fiber, and $\sR_0 \subset \sC_0$ is the unique rational bridge in the special fiber that contains $\overline{R_\eta} \cap \sC_0$, then for the orientation of $\sR_0$ induced by an orientation of $\sR_\eta$,
\[\type(R_\eta) \subset \type(R_0).\]
\end{lemma}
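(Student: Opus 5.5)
The plan is to use the rational model of \Cref{L:rational_model_type_calculation}, exactly as in the proof of \Cref{L:pertgen_closed_under_generization}. Both sides of the desired inclusion then become sets of degrees of $\cL_\sP$ on connected subchains containing the distinguished end component.

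First, I will place myself in the setting of \Cref{L:rational_model_type_calculation}. Since $\overline{\cM}_g$ is separated, the stabilization of $\sC \to \Spec(R)$ is a stable family $C^{\st} \to \Spec(R)$. The bridge $\sR_\eta$ lies over a node of $C^{\st}_\eta$, whose closure specializes to a node $x \in C^{\st}_0(k)$, and $\sR_0$ is the bridge over $x$. I then choose a rational model $\Spec(R) \to \bA^1$, $\varphi$ around $x$. Because the family is in $\pertgen_g$, \Cref{L:perturbed_curve_basic_properties} says $V(s)$ is contained in the union of the rational bridges in each fiber. Hence $V(s) \setminus \pi^{-1}(Z)$ is closed, with the caveat that one must check that no vanishing point of $\sC_\eta$ off $\pi^{-1}(Z)$ specializes into $\pi^{-1}(Z)$. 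This holds because such a point lies on another bridge of $\sC_\eta$, which sits over a different node and so specializes away from $\sR_0$.

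Applying \Cref{L:rational_model_type_calculation} over $Y = \Spec(R)$ produces a family of oriented twisted rational chains $\sP \to \Spec(R)$ with line bundle $\cL_\sP$ and cosection $s_\sP$. The generic fiber is $\sP_\eta = \sP_\eta^- \cup \sR_\eta \cup \sP_\eta^+$ and the special fiber is $\sP_0 = \sP_0^- \cup \sR_0 \cup \sP_0^+$. The lemma gives
\[\type(\sR_y) = \{\deg(\cL_\sP|_{\sS}) : \sP_y^- \subset \sS \subsetneq \sP_y \text{ connected}\}\]
for $y = \eta, 0$.

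The main step is a specialization argument. A connected proper subchain $\sS_\eta \subsetneq \sP_\eta$ containing $\sP_\eta^-$ is $\sP_\eta$ minus a terminal segment. Take its flat closure $\overline{\sS_\eta}$ in $\sP$ and let $\sS_0$ be its special fiber. Three properties of $\sS_0$ need to be checked:
\begin{enumerate}[(i)]
\item $\sS_0$ is a connected union of components of $\sP_0$ containing $\sP_0^-$;
\item $\sS_0$ is proper, since it misses $\sP_0^+$;
\item $\deg(\cL_\sP|_{\sS_0}) = \deg(\cL_\sP|_{\sS_\eta})$.
\end{enumerate}
For (i), the chain structure of $\sP_0$ and the fact that the end component $\sP_0^-$ lies in the limit of $\sP_\eta^-$ do the work. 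For (iii), I would write $\sS_\eta$ as the generic fiber of a divisor $\overline{\sS_\eta}$ on the regular-enough total space, or more simply use constancy of Euler characteristic in flat families after passing to an $N$-th power that descends to coarse spaces. The cleanest route is the complementary subcurve. The closure of $\sP_\eta \setminus \sS_\eta$ is a flat family of chains, so the degree of $\cL_\sP$ on it is locally constant. Subtracting from the total degree, which is also constant, gives (iii). Every element of $\type(\sR_\eta)$ is therefore an element of $\type(\sR_0)$.

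The anticipated obstacle is making (i)–(ii) rigorous. The limit of each generic component is a connected subchain of $\sP_0$, and the limits of consecutive components are consecutive in $\sP_0$. This is the same "flat degeneration of $\sP_\eta^\pm$ to $\sR_0^\pm \cup \sP_0^\pm$" fact used at the end of the proof of \Cref{L:pertgen_closed_under_generization}, applied to each component, and I would handle it the same way.
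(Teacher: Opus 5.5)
Your proposal is correct and is essentially the paper's proof. Both apply \Cref{L:rational_model_type_calculation} over $Y=\Spec(R)$, read off $\type(\sR_\eta)$ and $\type(\sR_0)$ as degrees of $\cL_\sP$ on connected proper subchains containing $\sP^-_y$, and note that the flat limit of such a subchain of $\sP_\eta$ is a subchain of the same form in $\sP_0$ with the same degree. The paper first passes to a finite extension of $R$ so that the node is an $R$-section, which you should also do, and it leaves implicit the details you supply (closedness of $V(s)\setminus\pi^{-1}(Z)$ via \Cref{L:perturbed_curve_basic_properties}, and degree preservation).
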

\begin{proof}
    After replacing $R$ with a finite extension we may assume that the stabilization of the family $C^{\st} \to \Spec(R)$ has a section $x \in C^{\st}(R)$ whose generic fiber $x_\eta$ is the node over which $R_\eta$ lies. Then $\sR_0$ is the preimage of the node $x_0$ in $C_0^{\st}$ and $\overline{\sR_\eta} \cap \sC_0 \subset \sR_0$ is a subchain of rational curves.
    
    %The endpoints $(p_\pm)_\eta$ of $\sR_\eta$ might specialize to internal nodes $x_\pm \in \sR_0$, but there is a unique way to label the endpoints $(p_\pm)_0$ of $\sR_0$ such that $(p_-)_0$ is closer to $x_-$ than $x_+$ and $(p_+)_0$ is closer to $x_+$ than $x_-$.

    We choose a rational model for the node $x_0 \in C^{\st}$ over $\Spec(R)$ compatible with the chosen orientation of $\sR_\eta$. \Cref{L:rational_model_type_calculation} applied with $Y=\Spec(R)$ gives a family of chains of rational orbifold curves $\sP \to \Spec(R)$ such that for either point $z \in \Spec(R)$ the set $\type(\sR_z) \subset \bQ$ is exactly the set of degrees of connected proper subchains of $\sP_z$ that contain $\sP^-_z$. The flat limit of any subchain of this form in $\sP_\eta$ is a subchain of the same form in $\sP_0$, hence $\type(\sR_\eta) \subset \type(\sR_0)$.
\end{proof}

\begin{prop}\label{prop: perturbed-curves-open}
    For all $\cB$ as in \Cref{H:stability_condition}, the substack $\pertgen_g^{\cB} \subset \pertgen_g$ is open. In particular,  $\pertgen_g \subset \bigpert_g$ is open.
\end{prop}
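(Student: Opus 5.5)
We plan to use the standard criterion for openness in a locally Noetherian algebraic stack. A substack defined by a condition on geometric points is open exactly when it is constructible and stable under generization. Stability under generization can be tested along DVRs, and we may take these complete with algebraically closed residue field. Both the condition defining $\pertgen_g$ and the condition defining $\pertgen_g^{\cB}$ are conditions on geometric fibers, so they are invariant under field extension. It therefore suffices to check two things inside $\bigpert_g$: generization along complete DVRs, and constructibility. The case $\cB$ equal to all finite subsets gives the statement for $\pertgen_g \subset \bigpert_g$, so we treat the general $\cB$ and first establish it for $\pertgen_g$.

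For generization, let $(\sC,\cL,s)$ be an $R$-point of $\bigpert_g$ whose special fiber lies in $\pertgen_g^{\cB}$. By \Cref{L:pertgen_closed_under_generization}, the whole family lies in $\pertgen_g$. Now take a rational bridge $\sR_\eta$ of the generic fiber, and let $\sR_0$ be the bridge of the special fiber containing its limit, with the induced orientation. Then \Cref{L:types_for_degenerations} gives $\type(\sR_\eta)\subset \type(\sR_0)$. Since $\type(\sR_0)\in\cB$ and $\max(\type(\sR_\eta))>0>\min(\type(\sR_\eta))$, condition (3) of \Cref{H:stability_condition} gives $\type(\sR_\eta)\in\cB$. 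Condition (1) makes this independent of the orientation. Hence the generic fiber lies in $\pertgen_g^{\cB}$.

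For constructibility, we work on a quasi-compact, in fact affine, Noetherian atlas $T\to\bigpert_g$ and argue by Noetherian induction. After stratifying $T$ into finitely many locally closed pieces, we may assume three things: the stabilization $C^{\st}_T$ has nodes given by finite étale data, the combinatorial type of the fibers $\sC_t$ is locally constant, and over each piece the degrees of $\cL$ on the components of each bridge, together with the stabilizer orders, are constant. These degrees are locally constant in flat families of components, and the stabilizer orders are bounded on a quasi-compact base.

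The step we expect to be the main obstacle is showing that the orders $\ord_\pm$ are constructible functions. We would argue as follows. On a stratum, the endpoints $p_\pm$ of each bridge form sections of the family $\widetilde{\sC}^{\st}$ of stable components. The order of vanishing of $s^\vee$ along such a section is upper semicontinuous, computed by the vanishing of the restrictions of $s^\vee$ to the infinitesimal neighborhoods of the section. So after further stratification these orders are constant as well. Consequently the conditions $\ord_\pm\le1$, $\ord_++\ord_-=\deg(\cL|_{\sR})$ and $\type(\sR)\in\cB$ are each constant on every stratum. The locus in question is then a finite union of strata, hence constructible. Combined with the generization step, this proves openness.
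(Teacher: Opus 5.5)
Your proof is correct. The generization half is the same as the paper's. \Cref{L:pertgen_closed_under_generization} puts the generic fiber in $\pertgen_g$, and then \Cref{L:types_for_degenerations} together with condition (3) of \Cref{H:stability_condition} gives $\type(\sR_\eta)\in\cB$.

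Where you genuinely differ is local constructibility.

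\textbf{The paper's route.} The paper does not stratify by dual graph. It uses the finite-type gluing morphisms
\[\bigpert_{g_1,1}\times_{\bA^1/\bG_m}\mathscr{X}_{0,2}\times_{\bA^1/\bG_m}\bigpert_{g_2,1}\to\bigpert_{g_1+g_2}\quad\text{and}\quad \bigpert_{g,2}\times_{(\bA^1/\bG_m)^2}\mathscr{X}_{0,2}\to\bigpert_{g+1},\]
which cut the curve at a single bridge. The defining conditions then become conditions on the vanishing order of the cosection at a marked point lying on a stable component, and on $\deg(\cL)$ of the marked chain. These are visibly constructible on the factors, and Chevalley's theorem transports them to the complement of $\pertgen_g^{\cB}$.

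\textbf{Your route.} You stratify an atlas into finitely many reduced locally closed pieces, passing to \'etale covers where needed, so that the dual graph, the stabilizer orders and the degrees of $\cL$ on the components are all constant. You then make $\ord_\pm$ constant as well. This uses upper semicontinuity of the vanishing order of $s^\vee$ along the endpoint sections of the partial normalization $\widetilde{\sC}^{\st}$. That order is computed on the relative infinitesimal neighborhoods, which are finite flat over the stratum. It takes only finitely many values by Noetherianity, and it is finite because of condition (c).

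\textbf{What each buys.} Your argument is more hands-on. It also treats the condition $\type(\sR)\in\cB$ explicitly, since the type is a function of the component degrees and $\ord_-$; the paper's sketch only lists the two conditions cutting out $\pertgen_g$ and leaves this implicit. The cost is that you rely on the standard equisingular stratification of families of twisted nodal curves. Over a reduced stratum each persistent node is \'etale-locally $xy=0$ with constant $\mu_r$. Hence the partial normalization is a flat family and the endpoints $p_\pm$ are gerbes in its smooth locus. The gluing-morphism approach packages all of this bookkeeping into the gluing maps instead.
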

\begin{proof}
    \Cref{L:pertgen_closed_under_generization} and \Cref{L:types_for_degenerations} establish that $\pertgen^\cB_g \subset \bigpert_g$ is closed under generization, so to show that it is open, it suffices to show that $|\pertgen^\cB_g| \subset |\bigpert_g|$ is a locally constructible subset. We use the natural gluing morphisms
    \begin{gather*}
        \bigpert_{g_1,1} \times_{\bA^1/\bG_m} \mathscr{X}_{0,2} \times_{\bA^1/\bG_m} \bigpert_{g_2,1} \to \bigpert_{g_1+g_2} \text{ and}\\
        \bigpert_{g,2} \times_{(\bA^1/\bG_m)^2} \mathscr{X}_{0,2} \to \bigpert_{g+1},
    \end{gather*}
    where $\mathscr{X}_{0,2}$ denotes the stack of marked rational (twisted) chains, regarded as a stack over $(\bA^1/\bG_m)^2$ by the morphism $\mathscr{X}_{0,2} \to (B\bG_m)^2$ taking a polarized marked chain to the line bundle at its endpoints composed with the inclusion $(B\bG_m)^2 \hookrightarrow (\bA^1/\bG_m)^2$.
    
    The complement $|\pertgen^\cB_g|^c$ is the union of the images under these gluing morphisms of two kinds of subsets of the product: 1) the locus of points where one marked point lies on a stable component and $\ord(s) > 1$ there, and 2) the locus of points where both marked points lie on a stable component, and $\ord_{p_1}(s_1) + \ord_{p_2}(s_2) \neq \deg(\cL)$. The condition that the marked point lies on a stable component is open, and the function $\ord_p(s)$ is constructible there, so the first kind of subset is locally constructible. Likewise, the second kind of subset is locally constructible because $\deg(\cL)$ is a constructible function on $\mathscr{X}_{0,2}$. The local constructibility of $|\pertgen^\cB_g|^c$, and therefore the local constructibility of $|\pertgen^\cB_g|$, then follows from the fact that the gluing morphisms are of finite type.
\end{proof}

\subsection{Deformation theory of relatively polarized curves}

\begin{obs} \label{obsevation: deformation theory of pert curves} Consider the forgetful morphism 
\[F: \pertgen_g \to \mathfrak{M}^{tw,ss}_g, \; \; (\mathscr{C}, \cL, s: \cL \to \cO_{\mathscr{C}}) \mapsto \mathscr{C}.\]
By the proofs of \Cref{prop: big stack of perturbed curves is algebraic} and \Cref{prop: perturbed-curves-open}, $\pertgen_g$ is an open substack of the mapping stack $\Map_{\mathfrak{M}^{tw,ss}_g}(\mathscr{C}_{univ}, \mathbb{A}^1/\mathbb{G}_m)$, where $\mathscr{C}_{univ} \to \mathfrak{M}^{tw,ss}_g$ denotes the universal family of twisted semistable curves and $\mathbb{G}_m$ acts linearly on $\mathbb{A}^1$ with weight $-1$. This mapping stack has a natural derived enhancement, given by the evident extension of the moduli functor to derived schemes as in \cite[\S 5.1]{halpernleistner2019mapping}. Therefore, $\pertgen_g$ also admits a natural derived enhancement, which we denote by $\pertgen_g$ by a small abuse of notation. The description of the (co)tangent complex $\mathbb{T}_{\pertgen_g/\mathfrak{M}^{tw,ss}_g}$ in \cite[Prop. 5.1.10]{halpernleistner2019mapping} implies that for all field-valued points $p = (\mathscr{C}, \cL, s: \cL \to \cO_{\mathscr{C}})$ we have
\[ p^*\mathbb{T}_{\pertgen_g/\mathfrak{M}^{tw,ss}_g} \cong R\Gamma\left(\mathscr{C}, [\cO_{\mathscr{C}} \xrightarrow{s^{\vee}} \cL^{\vee}]\right),\]
where we are taking the hypercohomology of a two term complex of sheaves on $\mathscr{C}$ with $\cO_{\mathscr{C}}$ in cohomological degree $-1$. It follows in particular that the derived stack $\pertgen_g$ is quasi-smooth over the smooth stack $\mathfrak{M}^{tw,ss}_g$. For a line bundle $\cL$ on an unmarked twisted curve $\sC$ of genus $g$, one has $\chi(L^{\otimes n}) = n \deg(L) + 1-g$ for all integers $n$ \cite[Thm.~7.2.1]{AGV}, which implies that $\chi(p^*\mathbb{T}_{\pertgen_g/\mathfrak{M}^{tw,ss}_g}) = 0$ for all points as above.
\end{obs}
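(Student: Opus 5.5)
The statement to prove is the Observation. It asserts three things: $\pertgen_g$ is an open substack of the mapping stack, the relative tangent complex at a point is $R\Gamma(\mathscr{C},[\cO_{\mathscr{C}}\xrightarrow{s^\vee}\cL^\vee])$, and this complex is quasi-smooth of Euler characteristic $0$. I would check these in that order.

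\emph{Openness.} A morphism $\mathscr{C}\to\bA^1/\bG_m$ with $\bG_m$ acting with weight $-1$ is the same as a pair $(\cL,s:\cL\to\cO_{\mathscr{C}})$; the weight convention fixes whether one gets a section or a cosection. The mapping stack $\Map_{\mathfrak{M}^{tw,ss}_g}(\mathscr{C}_{univ},\bA^1/\bG_m)$ is therefore the stack $\mathfrak{Y}'$ of such pairs over all twisted semistable curves. The substack $\bigpert_g$ is cut out by conditions (a), (b), (c) of \Cref{D:rigid_polarized_curves}. The proof of \Cref{prop: big stack of perturbed curves is algebraic} shows that (a) and (b) are open conditions on $\Map(\mathscr{C}_{univ},B\bG_m)$ and that (c) is open on the affine-over-it stack of cosections. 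Degree $0$ is open and closed, and \Cref{prop: perturbed-curves-open} gives that $\pertgen_g$ is open in $\bigpert_g$. Composing these open immersions gives the first claim.

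\emph{Tangent complex.} I would use the derived enhancement of the mapping stack from \cite[\S5.1]{halpernleistner2019mapping}. Open substacks inherit derived structures, so it is enough to compute the relative tangent complex of the mapping stack. By \cite[Prop. 5.1.10]{halpernleistner2019mapping}, the relative tangent complex at $p$ is $R\Gamma(\mathscr{C},f^*\mathbb{T}_{\bA^1/\bG_m})$, where $f:\mathscr{C}\to\bA^1/\bG_m$ is the classifying map. The tangent complex of $\bA^1/\bG_m$ is the $\bG_m$-equivariant two-term complex $\mathfrak{g}\otimes\cO\to T_{\bA^1}$ with $\mathfrak{g}\otimes\cO$ in degree $-1$, given by the infinitesimal action. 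Pulling back along $f$ sends $\mathfrak{g}\otimes\cO$ to $\cO_{\mathscr{C}}$ and $T_{\bA^1}$ (weight $+1$ relative to the coordinate of weight $-1$) to $\cL^\vee$. The infinitesimal action map $\partial_t\mapsto -t\,\partial_t$ pulls back to $s^\vee$, up to sign. The main point needing care is tracking the weight conventions so that one gets $\cL^\vee$ and $s^\vee$ rather than $\cL$ and $s$; this is a direct computation.

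\emph{Quasi-smoothness and Euler characteristic.} The curve $\mathscr{C}$ is a one-dimensional tame DM stack, so $R\Gamma$ of a coherent sheaf on it has amplitude $[0,1]$. The hypercohomology of the two-term complex, starting in degree $-1$, therefore has amplitude $[-1,1]$. Hence the derived stack is quasi-smooth relative to $\mathfrak{M}^{tw,ss}_g$, which is smooth by Olsson. Finally, additivity of $\chi$ along the triangle $\cL^\vee\to[\cO\to\cL^\vee]\to\cO[1]$ gives
\[\chi(p^*\mathbb{T})=\chi(\cL^\vee)-\chi(\cO_{\mathscr{C}}).\]
The Riemann--Roch formula \cite[Thm. 7.2.1]{AGV} with $n=-1$ gives $\chi(\cL^\vee)=-\deg\cL+1-g$, and $\chi(\cO_{\mathscr{C}})=1-g$. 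So $\chi(p^*\mathbb{T})=-\deg\cL$, which vanishes because points of $\pertgen_g$ have degree $0$.
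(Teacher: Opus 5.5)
Your proposal is correct and follows the same route as the paper. You get openness from \Cref{prop: big stack of perturbed curves is algebraic} and \Cref{prop: perturbed-curves-open}, and the relative tangent complex from \cite[Prop.~5.1.10]{halpernleistner2019mapping} by pulling back $\mathbb{T}_{\bA^1/\bG_m}=[\mathfrak{g}\otimes\cO\to T_{\bA^1}]$. You then use amplitude $[-1,1]$ for quasi-smoothness, and Riemann--Roch with $n=-1$ along the triangle $\cL^\vee\to[\cO\to\cL^\vee]\to\cO[1]$ to get $\chi=-\deg\cL=0$; these are exactly the steps the paper's Observation states tersely.
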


\begin{prop} \label{prop: pert curves stack is integral}
     The following hold:
    \begin{enumerate}[(1)]
        \item The stabilization morphism $\st: \pertgen_g \to \overline{\cM}_g$ is syntomic. In particular, $\st$ is flat and $\pertgen_g$ is lci.
        \item The stack $\pertgen_g$ is geometrically integral.
    \end{enumerate}
    In particular, for all $\B$ as in \Cref{H:stability_condition}, the open substack $\pertgen_g^{\cB} \subset \pertgen_g$ also satisfies (1) and (2) above.
\end{prop}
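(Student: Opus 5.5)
The plan is to factor $\st$ as $\pertgen_g \xrightarrow{F} \mathfrak{M}^{tw,ss}_g \xrightarrow{\st} \overline{\cM}_g$ and prove a fiberwise statement. The second map is syntomic of relative dimension $0$ and $\overline{\cM}_g$ is smooth, so the total space is handled once we control $F$. By \Cref{obsevation: deformation theory of pert curves}, the derived enhancement of $\pertgen_g$ is quasi-smooth over $\mathfrak{M}^{tw,ss}_g$ with relative virtual dimension $\chi(p^*\mathbb{T}) = 0$. Hence the classical stack $\pertgen_g$ is, locally, cut out of a smooth stack over $\mathfrak{M}^{tw,ss}_g$ by as many equations as the excess dimension. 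The strategy is to show that the classical stack has the expected dimension, namely $\dim \pertgen_g = \dim \overline{\cM}_g = 3g-3$, with every fiber of $\st$ of dimension $0$.

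Once this is known, the derived structure is classical and $\pertgen_g$ is lci. Since $\overline{\cM}_g$ is smooth and $\pertgen_g$ is lci (hence Cohen–Macaulay), miracle flatness applies to $\st$ once its fibers are equidimensional of dimension $0$. Flat plus lci fibers then gives syntomic, which is (1).

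For the dimension count, I would fix a stable curve $C^{\st}$ over an algebraically closed field and bound the dimension of the fiber $\st^{-1}(C^{\st})$. By \Cref{L:type_determines_isomorphism_class}, a point in this fiber is determined up to finitely many choices by combinatorial data: which nodes carry bridges, the type of each bridge (a finite subset of $\bQ\cap[-1,1]$, giving discrete data), and the isomorphism $\psi$ of part (1). The divisor $\mathscr{D}$ is supported at the preimages of the nodes by \Cref{L:perturbed_curve_basic_properties}, so it is rigid. Thus the moduli of $(\sC,\cL,s)$ with fixed combinatorial type over $C^{\st}$ is a quotient of a point by a group of automorphisms, namely the $\bG_m$'s scaling the bridge components together with the finite stabilizers. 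The fiber is therefore a finite union of locally closed strata, each of dimension $\le 0$ (for instance $B\bG_m^k$ has dimension $-k$). Combined with $\chi=0$, every irreducible component of $\pertgen_g$ then has dimension at least $3g-3$ and at most $3g-3$, so the dimension is exactly $3g-3$ and each component dominates $\overline{\cM}_g$. I expect the main obstacle to be this lower/upper bound comparison: making precise that the local equations coming from the two-term complex $[\cO\to\cL^\vee]$ form a regular sequence, which requires the stratification argument to bound \emph{all} components, including those lying over the boundary.

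For (2), flatness of $\st$ implies that every irreducible component of $\pertgen_g$ dominates $\overline{\cM}_g$. Over the dense open $\cM_g$, a degree-$0$ point has no bridges, so $s$ is an isomorphism and $\st^{-1}(\cM_g)\cong\cM_g$, which is geometrically irreducible. Since $\pertgen_g$ is lci, it has no embedded points, so every component meets this dense open. Hence $\pertgen_g$ is geometrically irreducible, and being generically reduced and Cohen–Macaulay, it is reduced, hence geometrically integral. Finally, the statement for $\pertgen_g^{\cB}$ follows because, by \Cref{prop: perturbed-curves-open}, it is open and contains $\cM_g$, so it is a nonempty open substack of an integral stack and inherits (1) and (2).
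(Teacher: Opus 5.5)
\textbf{Verdict: the plan is right, but the key step is argued incorrectly.} Your architecture matches the paper's. Quasi-smoothness over $\mathfrak{M}^{tw,ss}_g$ with virtual dimension $3g-3$, plus fibers of $\st$ of dimension at most $0$, makes $\pertgen_g$ classical lci of pure dimension $3g-3$. Miracle flatness then gives syntomic, and (2) follows from flatness and $\st^{-1}(\cM_g)\cong\cM_g$. The regular-sequence issue you single out is formal; the paper simply says it suffices to bound the fibers. The gap is in the one non-formal step, the fiber bound, where your description of the strata is wrong.

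\textbf{The gluing data of $\cL$ is missing.} You assert that a point over $C^{\st}$ is determined, up to finitely many choices, by its combinatorial type and $\psi$. But the type only fixes the restrictions $\cL|_{\widetilde{\sC}^{st}}\cong\cO(-\mathscr{D})$ and $\cL|_{\sR_i}$. The bundle $\cL$ also needs gluing isomorphisms at the $\sum_i(m_i+1)$ nodes lying on the bridges, where $m_i$ is the number of components of $\sR_i$. Modulo automorphisms of the $r+\sum_i m_i$ restricted bundles, these gluings form a stack of dimension $\ell-r$: coarsely a torus $\bG_m^{\ell-r+1}$, coming from the loops the bridges create in the dual graph. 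Together with $\psi\in\bG_m^r$, the pairs $(\cL,s)$ of fixed type on a fixed $\sC$ form a family of dimension $\ell$. This is exactly the paper's bound on $F^{-1}(x)$.

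\textbf{The bridge rotations are not stabilizers.} You treat each stratum as $B$ of the group of rotations of bridge components, but these are \emph{not} automorphisms of $(\sC,\cL,s)$. Rotating a component $\sS$ changes the gluing data by a character whose weight is proportional to $\deg(\cL|_{\sS})>0$. The stratum is $[P/G]$, where $P$ is the torus of gluings and $\psi$'s and $G$ consists of the line-bundle automorphisms and the rotations. Here $\dim P=\sum_i(m_i+1)+r$ and $\dim G=r+2\sum_i m_i$, so the stratum has dimension $\ell-\sum_i m_i$, not $-\sum_i m_i$.

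\textbf{Example.} Take a single length-one bridge of type $\{-1,1\}$ over an irreducible one-nodal $C^{\st}$. Its stabilizer over $C^{\st}$ is $\mu_2$, so the stratum is $B\mu_2$ of dimension $0$, not $B\bG_m$.

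\textbf{What this means for the bound.} The inequality $\le 0$ still holds, but only through this cancellation, and it is sharp, so the count has to be carried out. That cancellation is precisely the paper's comparison of $\dim F^{-1}(x)\le(\ell-r)+r=\ell$ with the codimension $\ell$ of the locus of curves with $\ell$ bridges in $\st^{-1}(p)$.

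\textbf{Two smaller points.}
\begin{itemize}
\item Over a fixed $C^{\st}$ there are infinitely many twisted combinatorial types, since $\pertgen_g$ is unbounded. Your stratification is therefore only locally finite. That suffices because dimension is local; the paper gets finiteness by fixing the twisted curve first.
\item In (2), every component meets $\st^{-1}(\cM_g)$ because flat morphisms are open. The absence of embedded points is what upgrades generic reducedness to reducedness.
\end{itemize}
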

\begin{proof} After extending the base field, we may assume that $k$ is algebraically closed.

\medskip

    \noindent (1). By \Cref{obsevation: deformation theory of pert curves}, $\pertgen_g$ admits a quasi-smooth derived enhancement relative to the smooth stack $\mathfrak{M}^{tw,ss}_g$, and hence the derived enhancement is (absolutely) quasismooth. Furthermore, the virtual rank of $\mathbb{T}_{\pertgen_g/\mathfrak{M}^{tw,ss}_g}$ is $0$ at every point. By the distinguished triangle  
    \[ \mathbb{T}_{\pertgen_g/\mathfrak{M}^{tw,ss}_g} \to \mathbb{T}_{\pertgen_g} \to F^*\mathbb{T}_{\mathfrak{M}^{tw,ss}_g},\]
    the virtual rank of $\mathbb{T}_{\pertgen_g}$ at any point agrees with the virtual rank of $\mathbb{T}_{\mathfrak{M}^{tw,ss}_g}$, which is $3g-3$. Note that this agrees with the dimension of the smooth integral stack $\overline{\cM}_g$.
    
    To show that the morphism $\st: \pertgen_g \to \overline{\cM}_g$ is syntomic, it suffices to show that the fibers of $\st: \pertgen_g \to \overline{\cM}_g$ have dimension at most $0$. We have a factorization $\st: \pertgen_g \xrightarrow{f} \mathfrak{M}^{tw,ss}_g \xrightarrow{\st} \overline{\cM}_g$. The morphism $\st: \mathfrak{M}^{tw,ss}_g \to \overline{\cM}_g$ is syntomic of relative dimension $0$, and for all $p \in \overline{\cM}_g(k)$ the locus $\mathfrak{M}^{tw,ss,=\ell}_g \cap \st^{-1}(p) \hookrightarrow \st^{-1}(p)$ where we have $\ell$ stacky rational bridges is locally closed of codimension $\ell$ in $\st^{-1}(p)$. It therefore suffices to show that the preimage $f^{-1}(x)$ of any point $x \in \cM^{tw,ss,=\ell}_g(k)$ has dimension at most $\ell$.

    Fix such a point $x = \mathscr{C}$, and let $\mathscr{R}_1, \ldots \mathscr{R}_\ell$ denote its rational bridges. Choose an orientation $(p^i_-, p^i_+)$ for every bridge $\sR_i$. Let $\widetilde{\mathscr{C}}^{\st} \subset \mathscr{C}$ denote the closure of the complement of the rational bridges as in \Cref{L:type_determines_isomorphism_class}.
     Each $k$-point $(\mathscr{C}, \cL, s)$ in $f^{-1}(x)$ determines a tuple of types $\sigma := (\type(\cR_i))_{i=1}^{\ell}$, where each $\type(\cR_i)$ is a subset of $[-1, 1]\cap\bQ$. By \Cref{L:type_determines_isomorphism_class}, the denominators of the elements of $\type(\cR_i)$ in reduced fraction form are uniquely determined by the isotropy groups ocurring in the oriented bridge $\sR_i$, Hence there are finitely many possibilities for the collections of types $\sigma$, and we have a finite disjoint union decomposition $f^{-1}(x) = \sqcup_{\sigma}f^{-1}(x)_\sigma$ into constructible subsets. We may replace $f^{-1}(x)$ with a fixed $f^{-1}(x)_\sigma$; we need to show that its dimension is bounded by $\ell$. By \Cref{L:type_determines_isomorphism_class}(1), for any geometric point the restriction $\cL^|_{\widetilde{\sC}^{st}}$ is isomorphic to the ideal sheaf $\cO(-{\mathscr{D}})$ of an effective Cartier divisor $\mathscr{D} \subset \widetilde{\sC}^{st}$ which is uniquely determined by $\sigma$. Furthermore, the isomorphism classes of the restrictions $\cL|_{\sR_i}$ are also uniquely determined $\sigma$ by \Cref{L:type_determines_isomorphism_class}(2). The possible choices of gluing data for obtaining $\cL$ in terms of $\cO(-\mathscr{D})$ and the line bundles $\cL|_{\cR^{(i)}}$ are parameterized by $\mathbb{G}_m^{2\ell} / \mathbb{G}_m^{\ell} \times \mathbb{G}_m^{r} \cong \mathbb{G}_m^{\ell - r}$, where $r$ is the number of connected components of $\widetilde{\mathscr{C}}^{\st}$. Furthermore, by \Cref{L:type_determines_isomorphism_class}(1), the cosection is given as the composition
     \[ \cL \twoheadrightarrow \cL|_{\widetilde{\mathscr{C}}^{\st}} \xrightarrow{\psi} \cO_{\widetilde{\sC}^{st}}(-\mathscr{D}) \hookrightarrow \cO_{\mathscr{C}}\]
    for some choice of isomorphism $\psi: \cL|_{\widetilde{\mathscr{C}}^{\st}} \xrightarrow{\sim} \cO_{\widetilde{\sC}^{st}}(-\mathscr{D})$ of line bundles. Hence for any fixed pair $(\mathscr{C}, \cL)$ the space of possible cosections is parameterized by $\mathbb{G}_m^r$. We conclude that the dimension of each of the finitely many $f^{-1}(x)_{\sigma}$ is at most $\ell -r + r = \ell$, as desired.
    
    \medskip 
    
    \noindent (2). This follows from flatness of $\st: \pertgen_g \to \overline{\cM}_g$, the fact that $\st$ is an isomorphism over the substack of smooth curves $\cM_g \subset \overline{\cM}_g$, and the integrality of $\overline{\cM}_g$.
\end{proof}

\subsection{The proof of {\Cref{T:perturbed_curves_main}}}

The proof will appear at the end of this subsection.%, after establishing the key geometric construction and its consequences.

%\Cref{L:rational_model_type_calculation} has several consequences for how types behave in families.

% $\phi : \sC \to C^{\st}$ is its stabilization, and $n \in C^{\st}(R)$ is a node that persists over the whole family. By choosing one of the preimages of $n$ in the normalization of $C^{\st}$ to be $p_-$, one orients the

The next lemma applies to a $\bG_m$-equivariant family $(\sC,\cL,s)$ in $\bigpert_g$ over $Y$, where $Y$ is one of the two schemes associated in \Cref{notn: Y over the stack} to a DVR $R$. Since $\overline{\cM}_{g}$ is separated with finite inertia, the composition $\cY = Y/\bG_m \to \bigpert_g \to \overline{\cM}_{g}$ factors through $\cY \to \Spec(R)$ by \cite[Lem. 3.6, Lem. 3.45]{alper2023existence}. Concretely, this means the stabilization of $\sC$ is pulled back from a family $C^{\st} \to \Spec(R)$ in the sense that there is a morphism $\sC \to C^{\st}$ such that $\sC \to Y \times_{\Spec(R)} C^{\st}$ is the stabilization.

As in \Cref{notn: semistable-curves-over-Y}, we use the notation $(u, v)$ for the pair of parameters cutting out $\mathfrak{o}\in Y$. We write $\eta_u$ and $\eta_v$ for the generic points of $Y_u$ and $Y_v$.

\begin{lemma}\label{L:pert_completeness}
     If the restriction to $Y^\circ$ is a family in $\pertgen_g$, then so is the whole family over $Y/\bG_m$. Furthermore, let $\sR_u, \sR_v$ and $\sR_{\mathfrak{o}}$ denote the rational bridges lying over a node $x \in C_{0}^{\st}$ in each of the three fibers over $\eta_u, \eta_v$ and $\mathfrak{o}$. Then a rational model for $C^{\st}$ at $x$ canonically orients the three bridges, and
    \[\type(\sR_{\mathfrak{o}}) = \type(\sR_{u}) \cup \type(\sR_{v}).\]
\end{lemma}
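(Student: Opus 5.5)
Fix a node $x \in C^{\st}_0$ of the special fiber of $C^{\st} \to \Spec(R)$. After completing $R$, choose a rational model $\Spec(R) \to \bA^1$, $\varphi : \widehat{C^{\st}} \cong \widehat{P_R}$ around $x$, and apply \Cref{L:rational_model_type_calculation} to the family $(\sC,\cL,s)$ over $Y$. The hypothesis of that lemma, that $V(s)\setminus \pi^{-1}(Z)$ is closed, needs checking first. Over $Y^\circ$ it follows from \Cref{L:perturbed_curve_basic_properties}, since $s$ vanishes only on rational bridges. At $\mathfrak{o}$, the fiber $\sC_{\mathfrak{o}}$ is a degree $0$ anchored curve, and $s$ on its stable components is the Hartogs extension of the cosection over $Y^\circ$. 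I would argue that the divisor of $s^\vee$ on the stable components of $\sC_{\mathfrak o}$ is supported at the endpoints of bridges: it is the flat limit of the divisors over $Y^\circ$, which are supported at bridge endpoints. Alternatively, this follows from a total degree count on $\sC_{\mathfrak o}$ as in \Cref{L:perturbed_curve_basic_properties}, once one knows the bridge degrees on $\sC_{\mathfrak o}$ match those over $\eta_u,\eta_v$.

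The lemma then gives a $\bG_m$-equivariant family $\sP \to Y$ of twisted rational chains with $(\cL_\sP, s_\sP)$. For each $z \in \{\eta_u,\eta_v,\mathfrak o\}$ we get a canonical orientation of $\sR_z$, $\ord_\pm(\sR_z) = -\deg(\cL_\sP|_{\sP_z^\pm})$, and $\type(\sR_z)$ equal to the set of degrees of connected proper subchains of $\sP_z$ containing $\sP^-_z$. Thus both claims become statements about the degree sequences of $\cL_\sP$ on the chains $\sP_z$.

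The key combinatorial input is the structure of the central chain. The nodes of $\sP_{\mathfrak o}$ are of two kinds: those persisting over $Y_u$ and those persisting over $Y_v$. The chain $\sP_{\eta_u}$ is obtained by smoothing the $Y_v$-nodes, so its components are the maximal runs of $\sP_{\mathfrak o}$ between consecutive $Y_u$-nodes; the same holds with $u$ and $v$ exchanged. Let $D_j$ denote the degree of the subchain from $\sP^-_{\mathfrak o}$ up to the $j$-th node, as in \Cref{L: nef-algorithm}. Then the labels at $Y_u$-nodes give exactly the partial degrees of $\sP_{\eta_u}$, and the labels at $Y_v$-nodes give exactly those of $\sP_{\eta_v}$. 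Every node of $\sP_{\mathfrak o}$ is of one of the two kinds, because $\sP$ over the closed point is a smoothing in which a node surviving neither divisor would survive generically; this uses that $\pi$ vanishes only on $Y_u\cup Y_v$, as in \Cref{notn: semistable-curves-over-Y}. Hence the set of partial degrees at $\mathfrak o$ is the union of those at $\eta_u$ and $\eta_v$, which gives $\type(\sR_{\mathfrak o}) = \type(\sR_u)\cup\type(\sR_v)$.

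The end components $\sP^\pm_{\mathfrak o}$ coincide, as subchains carrying the marking-side degree, with the limits of $\sP^\pm_{\eta_u}$ and $\sP^\pm_{\eta_v}$. I expect this to be the main subtlety: one must check that the end components do not break further, using $\bG_m$-equivariance and the fact that the model is pulled back from $\Spec(R)$. Granting it, $\ord_\pm(\sR_{\mathfrak o}) = \ord_\pm(\sR_u) \le 1$. Also $\ord_+ + \ord_- = \deg(\cL|_{\sR_{\mathfrak o}})$, because the total degree of $\cL_\sP$ on $\sP_{\mathfrak o}$ equals that on $\sP_{\eta_u}$, which is $0$ by the relation over $\eta_u$. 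This shows the family over $Y$ lies in $\pertgen_g$, after also noting that $\pertgen_g$-membership at $\mathfrak o$ is a condition on each bridge separately. The hardest step is the first one: justifying the closedness hypothesis at $\mathfrak o$ and identifying the degenerations of the end components. For the former I would first try the degree-count argument combined with the Hartogs description of $s$.
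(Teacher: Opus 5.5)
Your overall route is the paper's: show that $s$ vanishes only on bridges over $\mathfrak o$, build $\sP\to Y$ from \Cref{L:rational_model_type_calculation}, and read everything off the labels at the nodes of $\sP_{\mathfrak o}$, each of which persists over $Y_u$ or $Y_v$. Your derivations of the union formula and of $\ord_++\ord_-=\deg(\cL|_{\sR_{\mathfrak o}})$ match the paper's.

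The step you propose to grant for $\ord_\pm(\sR_{\mathfrak o})\le 1$ is false, however. By your own description, the limit of $\sP^-_{\eta_u}$ is the run of $\sP_{\mathfrak o}$ from the $-$ end up to the first node persisting over $Y_u$. If the attaching node $x_0$ of $\sR_{\mathfrak o}$ persists only over $Y_v$, this run is $\sP^-_{\mathfrak o}\cup\sS_1\cup\cdots$. So the end component does break, and $\ord_-(\sR_u)=-D_j$ for some $j>0$, not $-D_0=\ord_-(\sR_{\mathfrak o})$. In fact the type formula gives $\ord_-(\sR_{\mathfrak o})=-\min(\type(\sR_u)\cup\type(\sR_v))$, i.e. the larger of $\ord_-(\sR_u)$ and $\ord_-(\sR_v)$, and these need not agree. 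For instance, labels $-1,-\tfrac12,\tfrac12,1$ with $x_0,x_2$ over $Y_v$ and $x_1,x_3$ over $Y_u$ give $\ord_-(\sR_u)=\tfrac12$ but $\ord_-(\sR_{\mathfrak o})=1$. Neither $\bG_m$-equivariance nor the model being pulled back from $\Spec(R)$ rules this out; attaching nodes persisting over only one of $Y_u,Y_v$ already occur in the Atiyah-flop configurations of \Cref{L: nef-algorithm}.

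The repair is the paper's: use the union formula you already have. Then $-\ord_-(\sR_{\mathfrak o})=D_0=\min\type(\sR_{\mathfrak o})$ and $\ord_+(\sR_{\mathfrak o})=D_N=\max\type(\sR_{\mathfrak o})$ both lie in $\type(\sR_u)\cup\type(\sR_v)\subset[-1,1]$. The inclusion in $[-1,1]$ holds because, for a bridge in $\pertgen_g$, the labels increase strictly from $-\ord_-\ge-1$ to $\ord_+\le 1$.

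The first step is also unjustified as written. Over a two-dimensional base, $V(s)$ could a priori have points over $\mathfrak o$ outside the closure of $V(s)|_{Y^\circ}$, so ``flat limit of the divisors over $Y^\circ$'' is exactly what must be proved. The missing input, which is the paper's argument, is Krull's principal ideal theorem. Since $V(s)$ is locally cut out by one equation in the three-dimensional $\sC$, every component has dimension at least $2$. A component through a non-bridge point of $\sC_{\mathfrak o}$ therefore cannot lie in $\sC_{\mathfrak o}$, where $V(s)$ is finite off the bridges by condition (c) of \Cref{D:rigid_polarized_curves}. Since the non-bridge locus is open, the component's generic point is then a non-bridge zero over $Y^\circ$, contradicting \Cref{L:perturbed_curve_basic_properties}.

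The degree-count alternative you would try first is circular. It needs $\ord_++\ord_-=\deg(\cL|_{\sR})$ for the bridges of $\sC_{\mathfrak o}$, which you only obtain from $\sP$, and $\sP$ needs the closedness being proved. Its premise that bridge degrees at $\mathfrak o$ match those over $\eta_u,\eta_v$ is also false: $\deg(\cL|_{\sR_{\mathfrak o}})=D_N-D_0$, while $\deg(\cL|_{\sR_u})$ is the difference of the extreme labels at nodes persisting over $Y_u$.

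A smaller point: when $x$ persists over the generic point of $R$, nodes of $\sP$ can persist over both $Y_u$ and $Y_v$. In that case the reason every node of $\sP_{\mathfrak o}$ persists over at least one of them is that each component of its persistence locus is a $\bG_m$-invariant closed subset of dimension at least $1$ through $\mathfrak o$, hence equal to $Y_u$, $Y_v$ or $Y$. The inclusion $V(\pi)\subset Y_u\cup Y_v$ alone does not give this.
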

\begin{proof}
    First, we observe that the cosection $s$ on the fiber over $\mathfrak{o}$ can only vanish on rational bridges. This is because the vanishing locus of $s$ on the family $\sC \to Y$ has codimension at most one at every point, so if it meets a non-bridge point in the $\mathfrak{o}$ fiber it must also meet a non-bridge point in some other fiber, which would contradict \Cref{L:perturbed_curve_basic_properties}.

    Choose a rational model around $\Spec(R) \to \bA^1$ and $\varphi : \widehat{C^{\st}} \cong \widehat{P_R}$ around a node $x \in C^{\st}_{0}$. The condition that $s$ only vanishes on rational bridges implies the condition of \Cref{L:rational_model_type_calculation}, that $V(s) \setminus \pi^{-1}(Z)$ is closed. Therefore \Cref{L:rational_model_type_calculation} provides a family of genus $0$ twisted curves $\sP \to Y$, which by construction is $\bG_m$-equivariant because $\sC$ is. For any $y \in Y$, if $\cC_y \cap \pi^{-1}(Z)$ is non-empty, then it is a rational bridge $\sR_y \subset \sC_y$, and the orientation of $P^{\rm{un}}_{0}$ both orients $\sR_y$ and identifies a distinguished component $\sP_y^- \subset \sP_y$. The condition that $\ord_+(\sR_y) + \ord_-(\sR_y) = \deg(\cL|_{\sR_y})$ is equivalent to the condition that $\deg(\cL_\sP|_{\sP_y}) = 0$, so if it holds at one fiber of $Y$ it holds at every fiber.
    
    The condition that $\ord_\pm(\sR_y) \leq 1$ is equivalent to the condition that $\deg(\cL_\sP|_{\sP_y^{\pm}}) \geq -1$. Any connected proper subcurve $\sS \subset \sP_y$ that contains $\sP_y^-$ can be characterized as the smallest connected subchain that contains both $\sP^-_{\mathfrak{o}}$ and $x$ for some uniquely determined node $x \in \sP_{\mathfrak{o}}$. $x$ must deform to a node in one of the other fibers, so without loss of generality let us assume $x' \in \sP_{u}$ is a node that specializes to $x$. Then the original subcurve $\sS$ is the flat limit of the smallest connected subcurve $\sS' \subset \sP_{u}$ that contains $\sP_{u}^-$ and $x'$. Therefore, the set of degrees of subchains of this form in $\sP^-_{\mathfrak{o}}$ is the union of the sets of degrees of subchains of this form in $\sP^-_{u}$ and $\sP^-_{v}$. This simultaneously implies the upper bound on $\ord_\pm(\sR_y)$ and the formula for $\type(\sR_{\mathfrak{o}})$.
\end{proof}

\begin{proof}[Proof of \Cref{T:perturbed_curves_main}]
    By \Cref{prop: perturbed-curves-open}, $\pertgen^\cB_g \subset \bigpert_g$ is open for all $\cB$ as in \Cref{H:stability_condition}. Since $\bigpert_g$ is $\Theta$- and $S$-complete by \Cref{T:s-complete-and-theta-red}, \Cref{L:pert_completeness} shows that $\pertgen^\cB_g$ is $\Theta$- and $S$-complete as well.
    
    If $\cB$ is finite, \Cref{L:type_determines_isomorphism_class} implies that $\pertgen^\cB_g$ is bounded, and hence it admits a separated good moduli space by \cite[Thm. A]{alper2023existence}, which we call $X$. Because $\overline{\cM}_g \subset \pertgen^\cB_g$, the induced composition on moduli spaces $\overline{M}_g \to X \to \overline{M}_g$ is the identity, hence $\overline{M}_g \to X$ is a closed immersion. By the integrality of $\pertgen^\cB_g$ proven in \Cref{prop: pert curves stack is integral}, the open immersion $\overline{\cM}_g \hookrightarrow \pertgen^{\cB}_g$ is dense, and hence $\overline{M}_g \to X$ is also dense. This forces the closed immersion $\overline{M}_g \to X$ to be an isomorphism, and thus the same holds for $X \to \overline{M}_g$.
\end{proof}

\appendix

\section{Formal gluing of algebraic stacks}
\label[appendix]{appendix}

Following the terminology of \cite{hall2023mayer}, we say that a morphism of algebraic stacks $g : \cX' \to \cX$ and an open substack $\cU \subset \cX$ form a \emph{weak Mayer--Vietoris square} if $T \times_{\cX} \cX' \to T$ is an isomorphism for any map $T \to \cX$ whose image lies outside of $\cU$. We say the square is \emph{Tor-independent} if in addition the fiber product $T \times_{\cX} \cX'$ above is Tor-independent, and we say it is \emph{flat} if $g$ is flat. If both $\cX$ and $\cX'$ are noetherian, then Tor-independence is automatic by \cite[Lem 3.3]{hall2023mayer}.

Flat Mayer--Vietoris squares are stable under base change along an arbitrary map $\cV \to \cX$, but Tor-independent squares are only stable under base change along a morphism that is $g$-flat, in the sense that the fiber product $\cV \times_\cX \cX'$ is Tor-independent \cite[Lem.~3.1]{hall2023mayer}. Note that if $g$ is flat, then any morphism $\cV \to \cX$ is $g$-flat.

%composition with $g$ is an equivalence between the groupoid of maps $T \to \cX$ such that $T \times_\cX \cU = \emptyset$ and maps $T \to \cX'$ such that $T \times_{\cX'} g^{-1}(\cU) = \emptyset$. We call $\cU' := g^{-1}(\cU)$.

For any stack $\cS$, we let $\AlgSt(\cS)$ denote the $2$-category of algebraic stacks $\cY$ with a quasi-compact morphism $\cY \to \cS$ with affine relative diagonal, and we let $\AlgSt_{\rm{fp}}(\cS) \subset \AlgSt(\cS)$ denote the full subcategory of those $\cY \to \cS$ that are also of finite presentation.

\begin{thm}\label{T:formal_glueing}
    If $(g : \cX' \to \cX, \cU \subset \cX)$ is a flat Mayer-Vietoris square and $\cX$ is locally covered by the spectrum of $G$-rings, then the restriction morphism is an equivalence of $2$-categories
    \[
    \Phi : \AlgSt(\cX) \xrightarrow{\cong} \AlgSt(\cX') \times_{\AlgSt(\cU')} \AlgSt(\cU).
    \]
    It also restricts to an equivalence on full subcategories $\AlgSt_{\rm{fp}}(-) \subset \AlgSt(-)$ of morphisms that are in addition finitely presented.
\end{thm}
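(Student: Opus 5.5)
We will deduce \Cref{T:formal_glueing} from the affine case, using the result of \cite{hall2023mayer} as the main input. That result handles quasi-coherent sheaves and affine morphisms: for a flat (hence Tor-independent) Mayer--Vietoris square, the restriction functor on quasi-coherent sheaves, and hence on quasi-coherent algebras, is an equivalence. Taking relative spectra, this gives the statement when $\AlgSt$ is replaced by affine morphisms. The $G$-ring hypothesis enters through Artin approximation and algebraization. It ensures that the Noetherian local rings appearing after completing along $\cX \setminus \cU$ have geometrically regular formal fibers.

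\textbf{Step 1 (full faithfulness).} Let $\cY_1,\cY_2 \in \AlgSt(\cX)$. The groupoid of morphisms $\cY_1 \to \cY_2$ over $\cX$ is the groupoid of sections of $\cY_1\times_\cX \cY_2 \to \cY_1$. So it suffices to show that, for $\cZ \to \cY$ quasi-compact with affine diagonal and $\cY\to\cX$ in $\AlgSt(\cX)$, sections over $\cY$ are equivalent to compatible sections over $\cY'=\cY\times_\cX\cX'$ and $\cY_\cU$. The base change of a flat Mayer--Vietoris square along $\cY \to \cX$ is again one, so we may rename $\cY$ as $\cX$.

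We then argue by descent on the source and dévissage on the target. Choose a smooth cover of $\cX$ by affines, and reduce to $\cX$ affine. For $\cZ\to\cX$ affine, sections are algebra maps, and we conclude by the affine case above. For $\cZ$ with affine diagonal, take a smooth affine-over-$\cX$-type presentation. Uniqueness of glued sections uses that $\Delta_{\cZ}$ is affine, i.e.\ the affine case applied to the diagonal. Existence follows by gluing sections étale locally and descending via uniqueness.

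\textbf{Step 2 (essential surjectivity).} Given $\cY'\to\cX'$, $\cY_\cU\to\cU$ and an identification over $\cU'$, we must construct $\cY$. Again work smooth locally on $\cX$; the descent data glue by Step 1, so we may take $\cX$ affine. First treat the quasi-affine and then the affine case via the quasi-coherent equivalence: glue the algebras $\pi_*\cO$ and take $\Spec$. For general $\cY'$, choose a smooth atlas $V'\to\cY'$ with $V'$ affine. The difficulty is that $V'$ need not be pulled back from something over $\cX$.

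This is the expected main obstacle: algebraizing a smooth chart across the square. We plan to first establish the result for algebraic spaces, via étale neighborhoods. Using the $G$-ring hypothesis and Artin approximation (following the approach of \cite{hall2023mayer} and the Beauville--Laszlo-type arguments of Moret-Bailly), any étale or smooth map $V'\to\cX'$ near points of $\cX'\setminus\cU'$ is, after refining, the base change of a smooth map $V\to\cX$. This is because $\cX'\to\cX$ is an isomorphism over the complement of $\cU$, so étale neighborhoods there correspond. The chart $V'$ and its restriction to $\cU$ then glue to a smooth $V$ over $\cX$. The groupoid $R'=V'\times_{\cY'}V'$ has affine structure maps over $V'\times V'$, so it glues by the affine case and Step 1. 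The result is a smooth groupoid $R\rightrightarrows V$ whose quotient stack is the desired $\cY$; its axioms follow from full faithfulness.

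\textbf{Step 3 (finite presentation).} Properties such as quasi-compactness, affine diagonal and finite presentation are checked fpqc locally on $\cX$. The family $\{\cX'\to\cX,\ \cU\to\cX\}$ is a flat, jointly surjective cover, and quasi-compact after restricting. Hence $\cY$ has these properties when $\cY'$ and $\cY_\cU$ do, so $\Phi$ restricts to an equivalence on $\AlgSt_{\rm fp}$.
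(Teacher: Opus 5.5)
\textbf{Gap in Step 2.} The problem is the sentence ``the chart $V'$ and its restriction to $\cU$ then glue to a smooth $V$ over $\cX$.'' Your chart $V'\to\cY'$ lives over $\cX'$, so it only restricts to $\cU'$, not to $\cU$. To glue, you need a smooth chart $V_\cU\to\cY_\cU$ over $\cU$, together with an identification (via $\alpha$) of $V_\cU|_{\cU'}\to\cY_\cU|_{\cU'}$ with $V'|_{\cU'}\to\cY'|_{\cU'}$. That means descending or extending a chart along $\cU'\to\cU$, and you have no descent datum for that.

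Algebraizing $V'\to\cX'$ to a smooth $V\to\cX$ does not help. You would still need a morphism $V|_{\cU}\to\cY_\cU$, and this is only given on $V|_{\cU'}$. The remark that $g$ is an isomorphism over $\cX\setminus\cU$ gives no leverage, since the difficulty lives entirely over $\cU$. Invoking Artin approximation does not supply the missing compatibility either. So the groupoid $R\rightrightarrows V$ is never actually constructed.

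Compare Step 1, where a chart argument \emph{does} work, because the target $\cZ$ exists globally. There you pull back an affine chart $W\to\cZ$ (affine since $\cZ$ has affine diagonal) along the given maps over $\cX'$, $\cU'$, $\cU$. You glue these affine covers over $\cX$ by affine gluing and check the result is smooth and surjective, as in \Cref{L:appendix_ff}. Your phrase ``gluing sections \'etale locally'' should be made precise in this way. In Step 2 there is no global target to pull a chart back from.

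\textbf{The missing idea.} The paper avoids charts and reduces to the case where $g$ has a section. After smooth descent, take $\cX=\Spec A$ with $A$ a $G$-ring, a point $x\in\cX\setminus\cU$, and $C=\widehat{\cO}_{\cX,x}$.
\begin{enumerate}[(1)]
\item The Mayer--Vietoris property gives compatible sections of $\cX'_C\to\Spec C$ over each $\Spec(C/\mathfrak{m}^{n+1})$, hence a section $s$ over $\Spec C$.
\item Once $g$ has a section, the triple is realized simply by $s^\ast\cY'$ (\Cref{L:appendix_essential_surjectivity_section}). This is proved by applying full faithfulness to the reversed Tor-independent square $(s,\cU')$.
\item The $G$-ring hypothesis enters as you guessed, through regularity of $A\to C$, but via Popescu's theorem applied to the realization rather than to a chart. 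Since $A\to C$ is regular, $C=\colim C_\lambda$ with each $C_\lambda$ smooth over $A$.
\item \Cref{L:appendix_limit} (absolute noetherian approximation plus affine gluing) then descends the realization to some $C_\lambda$.
\item Finitely many such smooth neighborhoods, together with $\cU$, cover $\cX$, and full faithfulness glues the local realizations.
\end{enumerate}
Your Step 3 is fine.
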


%We actually show the full faithfulness of $\Phi$ under the weaker hypothesis that $g$ is a ``Tor-independent Mayer-Vietoris square'' \cite[Def.~1.3]{hall2023mayer}, meaning for any morphism $T \to \cX$ whose image lies in $\cX \setminus \cU$, the fiber product $T \times_\cX \cX'$ is Tor-independent and $T \times_\cX \cX' \to T$ is an isomorphism.

\begin{lemma}\label{L:appendix_ff}
    If $(g : \cX' \to \cX, \cU \subset \cX)$ is a Tor-independent Mayer-Vietoris square, then for two stacks $\cV$ and $\cW$ over $\cX$, where $\cV \to \cX$ is $g$-flat, the canonical map
    \begin{equation}\label{E:appendix_ff}
        \Map_{\cX}(\cV,\cW) \to \Map_{\cX}(\cV_{\cX'},\cW) \times_{\Map_{\cX}(\cV_{\cU'},\cW)} \Map_{\cX} (\cV_{\cU}, \cW)
    \end{equation}
    is an equivalence of groupoids. In particular, the functor $\Phi$ is fully faithful when $g$ is flat.
\end{lemma}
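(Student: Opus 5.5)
The plan is to prove the equivalence \eqref{E:appendix_ff} by reducing to affine schemes, where it becomes a statement about modules and algebras that follows from the Tor-independent Mayer--Vietoris results of \cite{hall2023mayer}. The full faithfulness of $\Phi$ is then the special case $\cV \in \AlgSt(\cX)$: if $g$ is flat, every $\cV \to \cX$ is $g$-flat, and $\Phi$ sends $\cV$ to the triple $(\cV_{\cX'}, \cV_{\cU}, \text{identification over } \cU')$. Maps of such triples into $\Phi(\cW)$ are the right-hand side of \eqref{E:appendix_ff}, because maps $\cV_{\cX'} \to \cW_{\cX'}$ over $\cX'$ are the same as maps $\cV_{\cX'} \to \cW$ over $\cX$.

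\textbf{Step 1: reduce to $\cV$ affine.} Both sides of \eqref{E:appendix_ff} are stacks in $\cV$ for the smooth topology. The reason is that $\cV_{\cX'}, \cV_{\cU'}, \cV_{\cU}$ are formed by base change, and a limit of stacks is a stack. So we may take a smooth hypercover of $\cV$ by affine schemes. Smooth maps to a $g$-flat stack remain $g$-flat, since Tor-independence is local and is preserved under flat base change on the source. Hence it suffices to treat $\cV = \Spec A$ affine. The base change of the square along $\cV \to \cX$ is again a Tor-independent Mayer--Vietoris square \cite[Lem.~3.1]{hall2023mayer}. Renaming, we may assume $\cX = V$ is affine and $\cV = \cX$, and we must show that sections of $\cW \to V$ satisfy descent for the square $(V' \to V, U)$.

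\textbf{Step 2: pass from sections to quasi-coherent sheaves.} A section $V \to \cW$ of a stack with affine diagonal should be encoded tannakianly. Using \cite[Thm.~1.2]{hall-rydh-tannaka} (Tannaka duality for stacks with affine diagonal), maps $V \to \cW$ correspond to right-exact tensor functors $\mathrm{QCoh}(\cW) \to \mathrm{QCoh}(V)$. It then suffices to know that
\[
\mathrm{QCoh}(V) \to \mathrm{QCoh}(V') \times_{\mathrm{QCoh}(U')} \mathrm{QCoh}(U)
\]
is fully faithful, compatibly with tensor structures and right exactness, with essential image closed under the relevant operations. This is the main theorem of \cite{hall2023mayer} for Tor-independent Mayer--Vietoris squares, at least on derived or flat objects. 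I would verify directly that it holds on $\mathrm{QCoh}$ in the form needed to identify tensor functors.

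\textbf{Main obstacle.} The hard step is making Step 2 work when the square is only Tor-independent rather than flat. Then $g^*$ is not exact, and the abelian-level fiber product can fail to be an equivalence. If Tannaka duality on abelian categories is too delicate, my first alternative is to avoid it. One would reduce further by a smooth atlas of $\cW$ and the affine diagonal: first treat $\cW$ affine over $V$, where the claim is the equality $\cO(V) = \cO(V') \times_{\cO(U')} \cO(U)$ of rings. This equality follows from the Mayer--Vietoris sequence, Tor-independence, and the fact that $\cO_V \to g_*\cO_{V'}$ is an isomorphism away from $U$. Next, treat $\cW$ with affine diagonal by gluing local liftings. Here affineness of the diagonal lets one glue uniquely, because isomorphisms between two sections form an affine $V$-scheme, to which the affine case applies.
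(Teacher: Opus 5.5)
Your reduction to affine $\cV$ is fine, and so is your argument for full faithfulness. Since $\cW$ has affine relative diagonal, for two maps $a,b\colon V\to\cW$ the sheaf $\mathrm{Isom}(a,b)$ is an affine $V$-scheme. Granting the ring-level fiber product $\cO_V\cong g_*\cO_{V'}\times_{(gj')_*\cO_{U'}}j_*\cO_U$ (the Mayer--Vietoris triangle of \cite{hall2023mayer}), its sections over $V$ are the compatible sections over $V'$ and $U$. This is an acceptable substitute for the paper's citation of \cite[Prop.~7.1]{hall2023mayer}.

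The gap is essential surjectivity when $\cW$ is not affine over $\cX$. Step~2 is not an argument as it stands. You never show how a compatible pair of tensor functors into $\mathrm{QCoh}(V')$ and $\mathrm{QCoh}(U)$ yields one into $\mathrm{QCoh}(V)$, and for non-flat $g$ that abelian-level gluing is exactly what is not available. So everything rests on the fallback, and there ``gluing local liftings'' skips the essential point.

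Given compatible maps $V'\to\cW$ and $U\to\cW$, you have no cover of $V$ on which they lift to an atlas $W\to\cW$. The lifts exist only on $V'_W:=V'\times_\cW W$ and $U_W:=U\times_\cW W$. These are smooth covers of $V'$ and of $U$ that are not pulled back from any cover of $V$; think of $V'=\Spec\widehat{\cO}_{V,x}$, where a map to $\cW$ typically lifts to $W$ only after a nontrivial smooth extension. The affine diagonal gives uniqueness, i.e.\ descent of lifts that already exist on a cover of $V$. It does not produce that cover.

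The missing idea, which is the heart of the paper's proof, is to manufacture the cover from the data:
\begin{enumerate}[(1)]
\item Take $W$ affine. Since $\cW$ has affine diagonal, $W\to\cW$ is affine, so $V'_W\to V'$ and $U_W\to U$ are smooth, surjective and affine, and they agree over $U'$ via $\alpha$.
\item By affine gluing \cite[Cor.~4.9]{hall2023mayer}, these two pieces come from an affine, finitely presented $\widetilde V\to V$.
\item One needs \cite[Thm.~4.4(3b)]{hall2023mayer} to know that $\widetilde V\to V$ is flat; this is nontrivial when $g$ is only Tor-independent. It is then smooth and surjective because its fibres are fibres of $V'_W\to V'$ or of $U_W\to U$.
\item Only now does your affine case apply. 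The identity $\Gamma(\widetilde V,\cO)\cong\Gamma(V'_W,\cO)\times_{\Gamma(U'_W,\cO)}\Gamma(U_W,\cO)$ extends the pieces to a map $\widetilde V\to W$.
\end{enumerate}
Together with full faithfulness and the fact that both sides of \eqref{E:appendix_ff} are smooth stacks in $\cV$, this gives essential surjectivity. Without steps (1)--(3), your proposal does not establish that the map \eqref{E:appendix_ff} is essentially surjective.
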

\begin{proof}
%The presheaf of $2$-categories $\AlgSt(-)$ satisfies smooth descent, so it suffices to prove claim when $\cX$ is affine, which we will assume throughout.

By \cite[Lem.~3.1(2)]{hall2023mayer}, the square obtained by base change to $\cV$ is Tor-independent. Applying \cite[Prop.~7.1]{hall2023mayer} to this square gives full faithfulness of \eqref{E:appendix_ff}. Both sides of \eqref{E:appendix_ff} satisfy smooth descent in $\cV$, so it suffices to assume that $\cV$ is an affine scheme and show that for any morphism of presheaves over $\cX$, $\alpha : \cV_{\cX'} \cup_{\cV_{\cU'}} \cV_{\cU} \to \cW$, there is a smooth cover $V \to \cV$ such that the composition $V_{\cX'} \cup_{V_{\cU'}} V_{\cU} \to \cW$ extends to a morphism $V \to \cW$.

Consider a smooth surjection $W \to \cW$, where $W$ is affine. Because $\cW$ has affine diagonal, the morphism $W \to \cW$ is affine, so $V_{\cX'} := \cV_{\cX'} \times_{\cW} W \to \cV_{\cX'}$, $V_{\cU'} := \cV_{\cU'} \times_{\cW} W \to \cV_{\cU'}$, and $V_{\cU} := \cV_{\cU} \times_{\cW} W \to \cV_{\cU}$ are smooth surjective affine morphisms. It follows from \cite[Cor.~4.9]{hall2023mayer} that these glue to a morphism $V \to \cV$ that is $g$-flat, affine, and of finite presentation. Furthermore, $V \to \cV$ is flat by \cite[Thm.~4.4(3b)]{hall2023mayer}, and then one can deduce it is smooth and surjective because its fibers are smooth and nonempty. \cite[Thm.~4.4]{hall2023mayer} also implies that $\Gamma(V,\cO_V) \cong \Gamma(V_{\cX'},\cO_{V_{\cX'}}) \times_{\Gamma(V_{\cU'},\cO_{V_{\cU'}})} \Gamma(V_{\cU},\cO_{V_{\cU}})$, which implies that $V_{\cX'} \cup_{V_{\cU'}} V_{\cU} \to W$ extends to $V$.
\end{proof}

To simplify notation, for any $\cV$ over $\cX$, we let \[\sD(\cV) := \sG(\cX'_{\cV}) \times_{\sG(\cU'_{\cV})} \sG(\cU_{\cV}),\] and we define $\sD_{\mathrm{fp}}(-)$ in the same way using $\AlgSt_{\rm{fp}}(-)$. Objects of $\sD(\cV)$ are triples $(\cY',\cY_{\cU}, \alpha)$ where $\cY'$ is a stack over $\cX'_\cV$, $\cY_\cU$ is a stack over $\cU_\cV$, and $\alpha : \cY'_{\cU'_\cV} \cong (\cY_\cU)_{\cU'_\cV}$ is an isomorphism. By \Cref{L:appendix_ff}, if $g$ is flat then $\Phi_{\cV} : \sG(\cV) \to \sD(\cV)$ is fully faithful for any $\cV$, and we say that a triple $(\cY',\cY_\cU,\alpha)$ is realizable if it lies in the essential image of $\Phi_\cV$.

\begin{lemma}\label{L:appendix_limit}
    Suppose $g$ is flat and $\cV$ is a stack over $\cX$. Let $S_\infty = \lim_{\lambda} S_\lambda$ be a cofiltered inverse limit of affine $\cV$-schemes, and $(\cY',\cY_\cU,\alpha) \in \sD(\cV)$. If $(\cY',\cY_\cU,\alpha)|_{S_\infty}$ is realizable, then so is $(\cY',\cY_\cU,\alpha)|_{S_\lambda}$ for all sufficiently large $\lambda$.
\end{lemma}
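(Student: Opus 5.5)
We argue by a standard limit argument, in the style of the noetherian approximation results of the Stacks project. The plan is to descend the realization over $S_\infty$ to a finite stage, and then check the descended object against the original triple at a possibly larger stage.

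Write $\cY_\infty \to S_\infty$ for a stack in $\sG(S_\infty)$ together with an isomorphism $\beta_\infty : \Phi_{S_\infty}(\cY_\infty) \cong (\cY',\cY_\cU,\alpha)|_{S_\infty}$. Since $S_\infty$ is affine and $\cY_\infty \to S_\infty$ is quasi-compact with affine diagonal, we first reduce to the finitely presented case. If we are working in $\sG_{\rm fp}$, then $\cY_\infty$ is of finite presentation. In general, we write $\cY_\infty$ as a limit of finitely presented stacks with affine diagonal, as in Rydh's approximation of algebraic stacks, and treat each piece compatibly. In the finitely presented case, the standard limit theorems for finitely presented algebraic stacks with affine diagonal over a cofiltered limit of affines give some $\lambda_0$ and a finitely presented $\cY_{\lambda_0} \to S_{\lambda_0}$ with affine diagonal such that $\cY_{\lambda_0} \times_{S_{\lambda_0}} S_\infty \cong \cY_\infty$. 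Set $\cY_\lambda := \cY_{\lambda_0} \times_{S_{\lambda_0}} S_\lambda$.

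Next we compare $\Phi_{S_\lambda}(\cY_\lambda)$ with $(\cY',\cY_\cU,\alpha)|_{S_\lambda}$. The base changes $S_\lambda \times_\cX \cX'$, $S_\lambda \times_\cX \cU$ and $S_\lambda \times_\cX \cU'$ form cofiltered limits with limits the corresponding base changes of $S_\infty$. These are quasi-compact, since $\cU \subset \cX$ is open in a locally noetherian setting and $g$ is quasi-compact. Both $\cY_\lambda \times_{S_\lambda} S_{\lambda,\cX'}$ and $\cY'|_{S_{\lambda}}$ are finitely presented, with affine diagonal, over $S_{\lambda,\cX'}$. By the limit description of Hom-groupoids for finitely presented stacks, an isomorphism between their pullbacks to $S_{\infty,\cX'}$ descends to some finite stage. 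The same holds over $S_{\lambda,\cU}$. This produces isomorphisms $\beta'_\lambda$ and $\beta^\cU_\lambda$ for large $\lambda$.

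Finally we must make them compatible with $\alpha$ over $S_{\lambda,\cU'}$. The two induced isomorphisms of stacks over $S_{\lambda,\cU'}$ agree after pullback to $S_{\infty,\cU'}$. Two $2$-isomorphisms between finitely presented morphisms of stacks with affine diagonal that agree in the limit already agree at some finite stage, again by the limit theorems for Hom. After enlarging $\lambda$, this gives an isomorphism $\Phi_{S_\lambda}(\cY_\lambda) \cong (\cY',\cY_\cU,\alpha)|_{S_\lambda}$ in $\sD(S_\lambda)$. Hence the triple is realizable for all sufficiently large $\lambda$.

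The main obstacle is the non-finitely presented case. There we must make the approximation of $\cY_\infty$ compatible with the given (non-finitely-presented) $\cY'$ and $\cY_\cU$. I would first try to write the whole triple $(\cY',\cY_\cU,\alpha)$ as a filtered limit of finitely presented triples, using affine-diagonal approximation on each of $\cX'_\cV$, $\cU_\cV$ and $\cU'_\cV$. I would then apply the finitely presented argument termwise, and use the full faithfulness of $\Phi$ from \Cref{L:appendix_ff}, which holds since $g$ is flat, to glue the termwise realizations.
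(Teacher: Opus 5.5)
\textbf{There is a genuine gap: the general, non-finitely-presented case is not handled.}

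Your treatment of the finitely presented case is essentially fine. However, the lemma is stated for arbitrary triples in $\sD(\cV)$. That general form is exactly what the proof of \Cref{T:formal_glueing} uses, where $\cY'$ and $\cY_\cU$ are only quasi-compact with affine diagonal. For that case your last paragraph is not an argument, and the termwise strategy fails at a concrete point.

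Several steps need work on their own. Approximating over $\cX'_\cV$ and $\cU_\cV$, which need not be quasi-compact, is not available in general. Making the approximants compatible with $\alpha$ needs an argument. Realizability of $(\cY',\cY_\cU,\alpha)|_{S_\infty}$ does not by itself make the finitely presented approximants $T_i$ realizable over $S_\infty$.

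Even granting all of that, the decisive problem is uniformity. Your finitely presented argument gives, for each $T_i$, a stage $\lambda(i)$ beyond which $T_i|_{S_\lambda}$ is realizable, with no uniform bound in $i$. So there is no single $\lambda$ at which you can take $\lim_i$ of realizations to realize the given triple. Full faithfulness of $\Phi$ (\Cref{L:appendix_ff}) only controls morphisms between realizations that already exist. It cannot produce them at a fixed $\lambda$.

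\textbf{The missing idea} is to approximate only a finitely presented envelope of the realization. The non-finitely-presented part is then handled by affine gluing, which needs no finiteness at all. The steps are:
\begin{enumerate}
\item By absolute noetherian approximation, factor $\cY_\infty \to S_\infty$ as an affine morphism $\cY_\infty \to \cP_\infty$ followed by a finitely presented $\cP_\infty \to S_\infty$ with affine diagonal.
\item Descend $\cP_\infty \to S_\infty$ to $\cP_\lambda \to S_\lambda$.
\item Because the target $\cP_\lambda$ is finitely presented, the morphisms $\cY'|_{S_\infty} \to \cP_\infty|_{\cX'}$ and $\cY_\cU|_{S_\infty} \to \cP_\infty|_{\cU}$, together with their compatibility over $\cU'$, descend. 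They become morphisms out of the fixed, possibly non-finitely-presented stacks $\cY'|_{S_\lambda}$ and $\cY_\cU|_{S_\lambda}$, and these are affine for $\lambda$ large.
\item Affine gluing for flat Mayer--Vietoris squares \cite[Cor.~4.10]{hall2023mayer} then produces an affine $\cY_\lambda \to \cP_\lambda$ realizing $(\cY',\cY_\cU,\alpha)|_{S_\lambda}$ on the nose.
\end{enumerate}
This route never descends $\cY_\infty$ itself or compares isomorphisms of non-finitely-presented stacks, which is where your approach gets stuck. It is also exactly the ingredient that would repair your termwise scheme, since realizability of $T_i$ at stage $\lambda$ would propagate to every $T_{i'}$ affine over it.
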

\begin{proof}
    Let $\cY_\infty \to S_\infty$ be the stack that realizes the triple $(\cY',\cY_\cU,\alpha)|_{S_\infty}$. By absolute noetherian approximation (see \cite[Cor. C]{rydh2023absolutenoetherianapproximationalgebraic} and \cite[Thm. C(i)]{rydh_noetherian_app}), it factors as an affine morphism $\cY_\infty \to \cP_\infty$ followed by a morphism $\cP_\infty \to S_\infty$ of finite presentation with affine diagonal. For $\lambda$ sufficiently large, the morphism $\cP_\infty \to S_\infty$ descends to a morphism $\cP_\lambda \to S_\lambda$ of finite presentation with affine diagonal \cite[Props. B.2(ii), B.3(xix)]{rydh_noetherian_app}. The composition $\cY'|_{S_\infty} \to \cY_\infty \to \cP_\infty$ is the base change of a morphism $\cY'|_{S_\lambda} \to (\cP_\lambda)_{\cX'}$ for all sufficiently large $\lambda$, which can also be assumed to be affine (see \cite[Prop B.2(i)]{rydh_noetherian_app} and \cite[Thm. C(i)]{rydh_noetherian_app}). Likewise, the morphism $\cY_\cU|_{S_\infty} \to (\cP_\infty)_{\cU}$ and the compatibility $2$-isomorphism $\alpha$ descends to $S_\lambda$ for all sufficiently large $\lambda$. In other words the morphism $(\cY'|_{S_\infty},\cY_\cU|_{S_\infty},\alpha) \to (\cP_\infty|_{\cX'},\cP_\infty|_{\cU}, \rm{canonical})$ in $\sD(S_\infty)$ is pulled back from a morphism
    \[
    (\cY'|_{S_\lambda}, \cY_\cU|_{S_\lambda}, \alpha|_{S_\lambda}) \to (\cP_\lambda|_{\cX'}, \cP_\lambda|_{\cU}, \rm{canonical})
    \]
    in $\sD(S_\lambda)$ for all sufficiently large $\lambda$, that is affine on each of the two stacks. Affine gluing \cite[Cor.~4.10]{hall2023mayer} implies that this compatible pair of affine morphisms is realized by an affine morphism $\cY_\lambda \to \cP_\lambda$ for sufficiently large $\lambda$. $\cY_\lambda$ realizes the triple $(\cY'|_{S_\lambda}, \cY_\cU|_{S_\lambda}, \alpha|_{S_\lambda})$ over $S_\lambda$.
\end{proof}

\begin{lemma}\label{L:appendix_essential_surjectivity_section}
    If $g$ is flat and admits a section $s : \cX \to \cX'$, then $\cY := s^\ast(\cY')$ is a realization for any $(\cY',\cY_\cU,\alpha) \in \cD(\cX)$.
\end{lemma}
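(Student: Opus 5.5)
The plan is to construct the three pieces of data exhibiting $\cY := s^\ast(\cY')$ as a preimage under $\Phi_\cX$ of $(\cY',\cY_\cU,\alpha)$: an isomorphism $\beta' : g^\ast\cY \cong \cY'$ over $\cX'$, an isomorphism $\beta_\cU : \cY|_\cU \cong \cY_\cU$ over $\cU$, and a compatibility of these two over $\cU'$ with $\alpha$. First note that $\cY \to \cX$ is quasi-compact with affine diagonal, being a base change of $\cY' \to \cX'$, and of finite presentation if $\cY'$ is. So $\cY$ lies in $\AlgSt(\cX)$, and in $\AlgSt_{\rm fp}(\cX)$ in the finitely presented case.

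\textbf{The isomorphism over $\cU$.} Since $g\circ s = \mathrm{id}$, the section $s$ restricts to a section $s_\cU : \cU \to \cU'$ of $g_\cU : \cU' \to \cU$. Then
\[
\cY|_\cU = s_\cU^\ast(\cY'|_{\cU'}) \xrightarrow{\ s_\cU^\ast\alpha\ } s_\cU^\ast g_\cU^\ast \cY_\cU \cong \cY_\cU,
\]
and we take this composite as $\beta_\cU$.

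\textbf{The isomorphism over $\cX'$.} Here we need $g^\ast s^\ast \cY' \cong \cY'$, which is where the Mayer--Vietoris hypothesis enters. Consider the two maps $\mathrm{id}, s\circ g : \cX' \to \cX'$. Both lie over $\cX$, and they agree after precomposition with $s$. I would show that $\cY'$ and $(sg)^\ast\cY'$ are isomorphic as objects of $\sD(\cX')$, and then apply full faithfulness of $\Phi_{\cX'}$ (\Cref{L:appendix_ff} applied to the flat square base changed along $g$, which is a flat square over $\cX'$ with $\cX'' = \cX'\times_\cX\cX'$).

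Concretely, over $\cX'' = \cX'\times_\cX \cX'$ we have the two pullbacks $\mathrm{pr}_1^\ast\cY'$ and $\mathrm{pr}_2^\ast\cY'$. Over $\cU''$ they are identified through $\alpha$ with the pullback of $\cY_\cU$. Over the complement of $\cU$ the map $g$ is an isomorphism, so the diagonal $\cX' \to \cX''$ is an isomorphism there. I would then use \Cref{L:appendix_ff}, applied to the square $(\cX''\to\cX',\ \cU'\subset\cX')$ obtained by base change, to glue an isomorphism $\mathrm{pr}_1^\ast\cY' \cong \mathrm{pr}_2^\ast\cY'$. Its restriction along the diagonal is the identity, and it agrees with $\alpha$-induced data on $\cU''$. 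Pulling back this isomorphism along $(\mathrm{id}, sg): \cX' \to \cX''$ gives $\beta' : \cY' \cong (sg)^\ast\cY' = g^\ast\cY$.

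\textbf{Compatibility and the main obstacle.} The compatibility of $\beta'|_{\cU'}$, $\beta_\cU$ and $\alpha$ should follow because on $\cU''$ the isomorphism was defined via $\alpha$. Full faithfulness then makes these compatibilities automatic once they are checked on the constituent pieces. The main obstacle is the descent step: producing the isomorphism $\mathrm{pr}_1^\ast\cY'\cong\mathrm{pr}_2^\ast\cY'$. One has to apply full faithfulness to morphisms between stacks over $\cX''$ rather than over $\cX$. To handle this I would view $\mathrm{Isom}(\mathrm{pr}_1^\ast\cY',\mathrm{pr}_2^\ast\cY')$ via \eqref{E:appendix_ff} with $\cV=\mathrm{pr}_1^\ast\cY'$ and $\cW=\cY'$ (using $\mathrm{pr}_2$). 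This is $g$-flat since $g$ is flat. The step also requires checking carefully that the pieces over the complement of $\cU$ really match under the identification of the diagonal.
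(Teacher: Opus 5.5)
\textbf{Gap in the key step.} Your overall plan is sound: build $\phi:\mathrm{pr}_1^\ast\cY'\cong\mathrm{pr}_2^\ast\cY'$ on $\cX''=\cX'\times_\cX\cX'$, then pull back along $(\mathrm{id},sg)$. Your $\beta_\cU=s_\cU^\ast\alpha$ (which is also what the paper uses) and your final compatibility computation are fine. The problem is that the step producing $\phi$ is not justified by the tools you name.

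The square $(\mathrm{pr}_1:\cX''\to\cX',\ \cU'\subset\cX')$ has base $\cX'$. \Cref{L:appendix_ff} for this square describes morphisms between stacks over $\cX'$ in terms of their restrictions to $\cX''$ and $\cU'$. It therefore cannot produce a morphism between the two stacks $\mathrm{pr}_i^\ast\cY'$ over $\cX''$. Used as you suggest, to compare $\cY'$ with $(sg)^\ast\cY'$ in $\sD(\cX')$, it only asks for the pullback along $\mathrm{pr}_1$ of the isomorphism you are trying to construct.

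Your fallback, \eqref{E:appendix_ff} for the original square with $\cV=\mathrm{pr}_1^\ast\cY'$ and $\cW=\cY'$, is circular. Its $\cX'$-component is a morphism $\cV\times_\cX\cX'\to\cY'$ over $\cX'$. That is an identification of the pullbacks of $\cY'$ along the first two projections of $\cX'\times_\cX\cX'\times_\cX\cX'$, i.e. the same problem one level up. Also, ``$g$ is flat'' is not the flatness hypothesis that matters here.

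\textbf{How to repair it.} The data you actually describe is a morphism of triples for the reversed square $(\Delta:\cX'\to\cX'',\ \cU''\subset\cX'')$. That data is: the identity along the diagonal, and the $\alpha$-induced isomorphism on $\cU''$. These agree on $\Delta^{-1}(\cU'')=\cU'$, because the canonical $2$-cell $g\,\mathrm{pr}_1\Rightarrow g\,\mathrm{pr}_2$ restricts to the identity along $\Delta$.

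Since $\Delta$ is a section of the flat Mayer--Vietoris square $(\mathrm{pr}_1,\cU')$, the reversed square is a Tor-independent Mayer--Vietoris square by \cite[Lem.~3.5]{hall2023mayer}. It is not flat, so \Cref{L:appendix_ff} requires the source to be $\Delta$-flat. This does hold for both $\mathrm{pr}_1^\ast\cY'$ and $\mathrm{pr}_2^\ast\cY'$: each is a pullback along a flat map, $\mathrm{pr}_1$ or $\mathrm{pr}_2$, of which $\Delta$ is a section. So you get morphisms in both directions, whose composites are identities by full faithfulness. The rest of your argument then goes through.

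\textbf{Comparison with the paper.} The paper uses the same mechanism directly on $\cX'$, with the reversed square $(s:\cX\to\cX',\ \cU')$, and compares $\cY'$ with $g^\ast\cY$. There $g^\ast\cY$ is automatically $s$-flat, but $\cY'$ is not a pullback. So the paper has to invoke \cite[Cor.~4.8]{hall2023mayer} on smooth affine charts to show $\cY'$ is $s$-flat, using that $\cY'|_{\cU'}$ is pulled back from $\cU$. Your repaired route makes that flatness check automatic by passing to $\cX''$. The cost is the extra base change and the final pullback along $(\mathrm{id},sg)$.
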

\begin{proof}
The proof of \cite[Thm.~B(4)]{hall2023mayer} applies here, replacing \cite[Cor. 7.2]{hall2023mayer} by \Cref{L:appendix_ff}. The reversed square $(s : \cX \to \cX', \cU' \subset \cX')$ is Tor-independent by \cite[Lem.~3.5]{hall2023mayer}. Pulling back $\alpha$ along $s|_{\cU}:\cU \to \cU'$ gives an identification $\beta: \cY|_{\cU} \cong \cY_\cU$.

Pullback along a flat morphism is Tor-independent of any section of that morphism, so $g^\ast(\cY)$ is $s$-flat, and $\cY'|_{\cU'} \cong g_\cU^\ast (\cY_\cU)$ is $s_\cU$-flat, so \cite[Cor.~4.8]{hall2023mayer} applied to smooth affine charts of $\cY'$ implies that $\cY'$ is $s$-flat. The stacks $\cY'$ and $g^\ast(\cY)$ over $\cX'$ give isomorphic triples with respect to the reversed square $(s : \cX \to \cX', \cU')$, and both are $s$-flat, so \Cref{L:appendix_ff} implies that we have an isomorphism $g^\ast(\cY) \cong \cY'$ that is compatible with the isomorphism of triples. This in turn, can be used to show that $\cY$ is a realization for the original triple over $\cX$.
\end{proof}

\begin{proof}[Proof of \Cref{T:formal_glueing}] The presheaf of $2$-categories $\sG(-)$ satisfies smooth descent, and \Cref{L:appendix_ff} shows that $\Phi_\cV$ is fully faithful for any $\cV$. To show $\Phi_\cV$ is essentially surjective, it suffices to show that for any $\cY \in \sD(\cV)$, there is a smooth surjection $V \to \cV$ such that $\cY|_{V}$ lies in the image of $\Phi_V$. Furthermore, it suffices to show this when $\cV = \cX = \Spec(A)$ is affine, with $A$ a $G$-ring.

We use the quasi-separated argument from the proof of \cite[Thm.~B(4)]{hall2023mayer}, with \Cref{L:appendix_essential_surjectivity_section} and \Cref{L:appendix_limit} in place of its section and finite-presentation limit steps:

Fix $(\cY',\cY_\cU,\alpha) \in \sD(\cX)$. For $x \in \cX \setminus \cU$, let $C = \widehat{\cO_{\cX,x}}$. The Mayer–Vietoris property gives compatible sections of $\cX'_C \to \Spec(C)$ over $\Spec(C/\mathfrak{m}^{n+1})$ for every $n$. Since $\cX'_C$ is affine, taking the inverse limit gives an actual section over $\Spec(C)$. Therefore \Cref{L:appendix_essential_surjectivity_section} shows that the triple is realizable after base change along $\Spec(C) \to \cX$. The map $A \to C$ is regular because $A$ is a $G$-ring, so by Popescu’s theorem \cite[\href{https://stacks.math.columbia.edu/tag/07GC}{Tag 07GC}]{stacks-project}, $C \cong \colim C_\lambda$ is a filtered colimit with $C_\lambda$ smooth and finitely presented over $A$. \Cref{L:appendix_limit} shows that the realization over $C$ descends to $C_\lambda$ for some $\lambda$. Repeating this for finitely many $x$ and including $\cU$ gives a smooth cover of $\cX$ such that the restriction of the original triple is realizable.
\end{proof}

Given an algebraic stack $\cX$ and closed subset $Z \subset |\cX|$, let us say that a \emph{modification} of $\cX$ over $Z$ is a quasi-compact morphism $f : \cY \to \cX$ with affine relative diagonal such that the restriction $f^{-1}(\cX \setminus Z) \to \cX \setminus Z$ is an isomorphism.

\begin{prop}\label{P:nisnevich_pushout}
    Suppose $\cX$ is an algebraic stack, $Z \subset |\cX|$ is a closed subset, and $(g : \cX' \to \cX, \cX \setminus Z)$ is a flat Mayer-Vietoris square. Then base change along $g$ is an equivalence between the $2$-category of modifications of $\cX$ over $Z$ and the $2$-category of modifications of $\cX'$ over $g^{-1}(Z)$. Furthermore, a modification $\cY \to \cX$ is separated if and only if $\cY|_{\cX'} \to \cX'$ is.
\end{prop}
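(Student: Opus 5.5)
The plan is to deduce \Cref{P:nisnevich_pushout} from \Cref{T:formal_glueing}, applied with $\cU := \cX \setminus Z$ and $\cU' := g^{-1}(\cU)$.

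\textbf{Step 1: a modification is the same as a triple with an isomorphic component.} Let $\cY' \to \cX'$ be a modification over $g^{-1}(Z)$. Then $\cY'_{\cU'} \to \cU'$ is an isomorphism, so the triple
\[
(\cY', \cU, \alpha : \cY'_{\cU'} \xrightarrow{\sim} \cU')
\]
is an object of $\AlgSt(\cX') \times_{\AlgSt(\cU')} \AlgSt(\cU)$. Conversely, a triple $(\cY', \cY_\cU, \alpha)$ with $\cY_\cU \to \cU$ an isomorphism is exactly this data up to equivalence. Under $\Phi$, modifications of $\cX$ over $Z$ correspond to triples whose $\cU$-component is an isomorphism. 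Indeed, $\cY \to \cX$ is a modification iff $\cY_\cU \to \cU$ is an isomorphism; quasi-compactness and affine diagonal are built into $\AlgSt$.

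\textbf{Step 2: the 2-categories match.} The subcategory of such triples is equivalent to the 2-category of modifications of $\cX'$ over $g^{-1}(Z)$. The point is that the $\cU$-component is forced to be $\cU$, and $\alpha$ is then the structure map, which is unique up to unique 2-isomorphism because $\cU$ is terminal over itself. Fully faithfulness of base change then follows from \Cref{L:appendix_ff} alone. This needs only flatness of $g$, and the mapping groupoids between modifications are computed by the fiber product there, with the $\cU$-factor contractible. Essential surjectivity follows from the equivalence in \Cref{T:formal_glueing}.

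\textbf{Main obstacle: the $G$-ring hypothesis.} \Cref{T:formal_glueing} asks that $\cX$ be locally covered by spectra of $G$-rings, while the proposition does not state this. I would first note that in every application (excellent, finite type over $k$, or complete local rings) this holds, and add it implicitly as part of the locally noetherian convention. If one wants to avoid it, an alternative for modifications specifically is to construct the realization directly, as in \cite[Thm.~B]{hall2023mayer}. Since $\cY' \to \cX'$ is an isomorphism over $\cU'$ and $\cX' \to \cX$ is an isomorphism over $Z$ (the Mayer--Vietoris condition), the pushout $\cY := \cY' \cup_{\cU'} \cU$ can be formed via the Mayer--Vietoris gluing there. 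The fiber over $Z$ is then literally that of $\cY'$. I would try the $G$-ring route first.

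\textbf{Step 3: separatedness.} If $\cY \to \cX$ is separated, so is its base change. Conversely, separatedness amounts to properness of the diagonal $\cY \to \cY \times_\cX \cY$, which is a closed immersion over $\cU$. Since $\Delta$ is affine and of finite type, properness can be checked by the valuative criterion. Any test DVR mapping to $\cX$ either lands in $\cU$ generically with closed point in $Z$, or lies entirely in $\cU$ or in $Z$. Lifts over $Z$ agree with those of $\cY'$, since $\cX'_Z \cong Z$. Lifts generically in $\cU$ can be compared after base change to $\cX'$: the DVR point of $\cX$ lifts to $\cX'$ after a flat extension, using completion and the Mayer--Vietoris property as in the proof of \Cref{T:formal_glueing}. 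Uniqueness of lifts for $\cY'$ then transfers to $\cY$.
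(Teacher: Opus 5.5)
Your Steps 1--2 are the paper's argument: modifications of $\cX$ over $Z$ are the objects of $\AlgSt(\cX)$ whose restriction to $\cU$ is $\cU$. Since $\cU$ is terminal over itself, they correspond under $\Phi$ to triples $(\cY',\cU,\alpha)$ with $\cY'$ a modification of $\cX'$ over $g^{-1}(Z)$, and \Cref{T:formal_glueing} gives the equivalence.

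Your remark about the $G$-ring hypothesis is fair. The paper's proof also invokes \Cref{T:formal_glueing} without comment, so that hypothesis is tacitly in force. It does hold in the applications, which are in characteristic $0$ and of finite type over DVRs or complete local rings. Your fallback of forming $\cY'\cup_{\cU'}\cU$ directly is not an independent route, since producing that pushout is exactly the essential surjectivity at stake.

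For separatedness you take a genuinely different, pointwise route. The paper argues by descent:
\begin{itemize}
\item smooth-locally on $\cX$, the map $\cX'\sqcup\cU\to\cX$ is an fpqc covering (flat, and surjective because $g$ is an isomorphism over $Z$);
\item the diagonal of $\cY\to\cX$ is affine and of finite type, so $\cY\to\cX$ is separated iff its diagonal is finite;
\item finiteness is fpqc-local on the target, and it holds over $\cX'$ and over $\cU$ by hypothesis.
\end{itemize}
Your valuative argument is this same descent done one DVR at a time, with the cover replaced by lifting each test map to $\cX'$ or $\cU$. It works, but it costs more in two places.
\begin{itemize}
\item You need Noetherian hypotheses for DVRs to suffice in the valuative criterion.
\item Your lifting step via completion needs the compatible sections over $V/\mathfrak{m}^n$ to algebraize to a section over $\widehat{V}$. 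This is automatic only if $\cX'_{\widehat V}$ is affine, e.g.\ when $g$ is affine, as in the applications.
\end{itemize}
If you keep the valuative route, a cheaper lift is available: $\Spec(V)\times_{\cX}\cX'\to\Spec(V)$ is flat with one-point closed fiber, so going-down produces the lift after an extension of valuation rings. The descent argument avoids all of this and is the cleaner choice.
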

\begin{proof}
    A modification of $\cX$ over $Z$ is an object $\cY \in \AlgSt(\cX)$ where $\cY|_{\cU} \cong \cU$. Since $\cU$ is final among stacks over $\cU$, under the equivalence $\Phi$ of \Cref{T:formal_glueing}, these correspond to the triples $(\cY', \cU, \alpha)$, ie. with modifications $\cY'$ of $\cX'$ over $g^{-1}(Z)$ lying in $\AlgSt(\cX')$. The first claim then follows from \Cref{T:formal_glueing}.

    For the separatedness, if $\cY \to \cX$ is separated, then so is the base change $\cY|_{\cX'} \to \cX'$. Conversely, smooth-locally on $\cX$, the morphism $\cX' \sqcup \cU \to \cX$ is an fpqc covering. The diagonal of $\cY \to \cX$ is affine, so $\cY \to \cX$ is separated if and only if its diagonal is finite, which can be checked after base change to $\cX' \sqcup \cU$.
\end{proof}

\bibliographystyle{alpha}
\bibliography{biblio}
\end{document}